\dedicatory{
In memory of Maryam Mirzakhani\\
who inspired us in so many ways.
}
\numberwithin{equation}{section}
\newcommand{\nofcyl}{m}  % \nofcyl} is the number of cylinders
\newcommand{\mult}{\nu}  % \mult_i is the multiplicity of a zero or of a pole
\newcommand{\degofz}{j}  % \degofz is degree of zero
\newcommand{\cyl}{cyl}   % \cyl_k is the coefficient in the asymptotic contribution of square-tiled surfaces
\newcommand{\prob}{\operatorname{P}}
\newcommand{\Aut}{\operatorname{Aut}}
\newcommand{\Card}{\operatorname{Card}}
\newcommand{\Vol}{\operatorname{Vol}}
\newcommand{\CP}{{\mathbb C}\!\operatorname{P}^1}
\newcommand{\Area}{\operatorname{Area}}
\renewcommand{\Re}{\operatorname{Re}}
\renewcommand{\Im}{\operatorname{Im}}
\renewcommand{\emptyset}{\varnothing}
\renewcommand{\epsilon}{\varepsilon}
\newcommand{\GLR}{\operatorname{GL}_2(\mathbb{R})}
\newcommand{\SLR}{\operatorname{SL}_2(\mathbb{R})}
\newcommand{\SLZ}{\operatorname{SL}_2(\mathbb{Z})}
\newcommand{\UhR}{\operatorname{U}_h(\mathbb{R})}
\newcommand{\UvR}{\operatorname{U}_v(\mathbb{R})}
\newcommand{\UhZ}{\operatorname{U}_h(\mathbb{Z})}
\newcommand{\UvZ}{\operatorname{U}_v(\mathbb{Z})}
\newcommand\C{\mathbb C}
\renewcommand\L{\mathbb L}
\newcommand\N{\mathbb N}
\newcommand\Q{\mathbb Q}
\newcommand\R{\mathbb R}
\newcommand\T{\mathbb T}
\newcommand\Z{\mathbb Z}
\newcommand{\cD}{\mathcal{D}}
\newcommand{\cH}{\mathcal{H}}
\newcommand{\cG}{\mathcal{G}}
\newcommand{\cL}{\mathcal{L}}
\newcommand{\cM}{\mathcal{M}}
\newcommand{\cN}{\mathcal{N}}
\newcommand{\cP}{\mathcal{P}}
\newcommand{\cQ}{\mathcal{Q}}
\newcommand{\cS}{\mathcal{S}}
\newcommand{\cT}{\mathcal{T}}
\newlength{\halfbls}\setlength{\halfbls}{.5\baselineskip}
\newtheorem{Theorem}{Theorem}[section]
\newtheorem*{NNTheorem}{Theorem}
\newtheorem{Proposition}[Theorem]{Proposition}
\newtheorem{Lemma}[Theorem]{Lemma}
\newtheorem{Corollary}[Theorem]{Corollary}
\theoremstyle{definition}
\newtheorem{Definition}[Theorem]{Definition}
\newtheorem{Convention}[Theorem]{Convention}
\theoremstyle{remark}
\newtheorem{Example}[Theorem]{Example}
\newtheorem{Remark}[Theorem]{Remark}
\title{Enumeration of meanders and Masur--Veech volumes}
\author[V.~Delecroix]{Vincent Delecroix}
\address{
LaBRI,
Domaine universitaire,
351 cours de la Lib\'eration, 33405 Talence, FRANCE
}
\email{20100.delecroix@gmail.com}
\author[\'E.~Goujard]{\'Elise Goujard}
\thanks{Research of the second author was partially supported  by a public grant as part of the FMJH}
\address{
Institut de Math\'ematiques de Bordeaux,
Universit\'e de Bordeaux,
351, cours de la Lib\'eration, 33405 Talence, FRANCE
}
\email{elise.goujard@gmail.com}
\author[P.~G.~Zograf]{Peter~Zograf}
\thanks{Research of Section~\ref{s:Computations:for:square:tiled:surfaces} is supported by the RScF grant 16-11-10039.}
\address{
St.~Petersburg Department, Steklov Math. Institute, Fontanka 27,
St. Petersburg 191023, and Chebyshev Laboratory,
St. Petersburg State University, 14th
Line V.O. 29B, St.Petersburg 199178 Russia}
\email{zograf@pdmi.ras.ru}
\author[A.~Zorich]{Anton Zorich}
\thanks{Research of the fourth author was partially supported by IUF}
\address{
Center for Advanced Studies, Skoltech;
Institut de Math\'ematiques de Jussieu --
Paris Rive Gauche,
%B\^atiment Sophie Germain,
Case 7012,
8 Place Aur\'elie Nemours,
75205 PARIS Cedex 13, France}
\email{anton.zorich@gmail.com}
\subjclass[2010]{
Primary
32G15. % Moduli of Riemann surfaces, Teichm\"uller theory
Secondary
57M50, % Geometric structures on low-dimensional manifolds
30F30, % Differentials on Riemann surfaces
05C30.  % Enumeration in graph theory
}
\begin{document}

\begin{abstract}
A \textit{meander} is a topological configuration of a line and a
simple closed curve in the plane (or a pair of simple closed curves
on the 2-sphere) intersecting transversally. Meanders can be traced
back to H.~Poincar\'e and naturally appear in various areas of
mathematics, theoretical physics and computational biology (in
particular, they provide a model of polymer folding). Enumeration
of meanders is an important open problem. The number of meanders with
$2N$ crossings grows exponentially when $N$ grows, but the
long-standing problem on the precise asymptotics is still
out of reach.

We show that the situation becomes more tractable if one
additionally fixes the topological type (or the total number of
minimal arcs) of a meander. Then we are able to derive simple
asymptotic formulas for the numbers of meanders as $N$ tends to
infinity. We also compute the asymptotic probability of getting a
simple closed curve on a sphere by identifying the endpoints of two
arc systems (one on each of the two hemispheres) along the common
equator.

The new tools we bring to bear are based on interpretation of
meanders as square-tiled surfaces with one horizontal and one vertical
cylinders. The proofs combine recent results on Masur--Veech
volumes of the moduli spaces of meromorphic quadratic differentials
in genus zero with our new observation that horizontal and vertical
separatrix diagrams of integer quadratic differentials are
asymptotically uncorrelated. The additional combinatorial
constraints we impose in this article yield explicit polynomial
asymptotics.

\end{abstract}

\maketitle
%\newpage
\tableofcontents

%############################################################
%############################################################
%############################################################
\section{Introduction}
\label{s:introduction:and:main:results}

In the seminal
paper~\cite{Mirzakhani:growth:of:simple:geodesics}
M.~Mirzakhani computed the asymptotics for the number of
simple closed geodesics on a hyperbolic surface of constant
negative curvature. In particular, she proved that
asymptotically, when the bound for the length of simple
closed geodesics tends to infinity, the probability of
getting a separating or non-separating geodesic becomes
independent of the hyperbolic metric.

We count the asymptotics for the number of \textit{pairs}
of transverse simple closed curves, or meanders, of a fixed combinatorial
type on a sphere when the number of intersections tends to
infinity.
Our starting observation is that a pair of transverse
simple closed curves which is seemingly a purely
combinatorial object defines a natural complex structure
and an ``integer'' meromorphic quadratic differential on
the original sphere.

M.~Mirzakhani established a relation between counting of
simple closed curves and Weil--Petersson volumes of the
moduli spaces of bordered hyperbolic surfaces, see Theorem
5.3 in~\cite{Mirzakhani:growth:of:simple:geodesics}.
Counting pairs of transverse simple closed curves leads
naturally to Masur--Veech volumes of the moduli spaces of
meromorphic quadratic differentials with at most simple
poles. In both situation, an essential ingredient is the
ergodicity of certain group action. In Mirzakhani's case,
it is the action of the mapping class group
$\operatorname{Mod}_{g,n}$ on the space of measured
laminations $\cM\cL_{g,n}$. In our setting, it will be the
$\GLR$-action on (strata of) the moduli space of quadratic
differentials $\cQ(\xi)$,where $\xi$ denotes the number of
simple poles, the number of zeroes, and the degrees of
zeroes of meromorphic quadratic differentials in the
stratum $\cQ(\xi)$. Both moduli spaces have integral
piecewise linear structures and both counting problems
(count of multicurves and count of pairs of transverse
multicurves) can be formulated in terms of count of integer
points in respectively $\cM\cL_{g,n}$ and $\cQ(\xi)$. A
simple example is provided by the set of primitive integer
points in $\Z^2\subset\R^2$. They have asymptotic density
$\delta=\frac{6}{\pi^2}$, meaning that in a ball of radius
$R$ centered around the origin, there are $\delta \pi R^2 +
o(R^2)$ such primitive points. Moreover, the density is
\textit{uniform} (see Section~\ref{ss:densities} for formal
definitions).

The other essential ingredient of our proof that has no equivalent
in Mirzakhani setting is a non-correlation result that we deduce
from the product structure on the strata of quadratic differentials.
More precisely, we prove that for suitable subsets $\cD_1$ and
$\cD_2$ of square-tiled surfaces, the density $\delta(\cD_1 \cap \cD_2)$
of their intersection is equal to the product of densities
$\delta(\cD_1)\cdot\delta(\cD_2)$ (see Theorem~\ref{th:uncorrelated}).
This non-correlation result is visible in our meander count:
the constant $\cyl_{1,1}(\cQ(1^s,-1^{s+4}))$ appearing in
Theorem~\ref{th:meander:counting} has an explicit product type expression
given in~\eqref{eq:c11:as:c1:squared:over:Vol}.

The equidistribution and non-correlation results are of
independent and much more general interest for enumerative
geometry of the moduli spaces of Abelian and quadratic
differentials. Application to the meander count is
just one out of many manifestations of these new results.

%------------------------------------------------------------
\subsection{Structure of the paper}
\label{ss:structure:of:the:paper}

In the first Section we state our results
on meander enumeration. The link with quadratic
differentials and Masur-Veech volumes is explained in
Section~\ref{s:strategy}. Section~\ref{s:From:arc:systems:and:meanders:to:square:tiled:surfaces} provides the proof of our results on meander
count; it uses general results from the subsequent
Section~\ref{s:Equidistribution}. This last
Section of the article proves the equidistribution and non-correlation
results in the moduli space of quadratic differentials.

The paper is organized in such way that Section~\ref{s:Equidistribution}
can be omitted by the readers interested only in meanders. On
the other hand, the readers interested only in square-tiled
surfaces and moduli spaces of quadratic differentials
can pass directly to Section~\ref{s:Equidistribution}.

Finally, Appendix~\ref{a:Lattices} describes the geometry
underlying two natural normalizations of the Masur--Veech
volume element on the moduli spaces of quadratic
differentials. This clarification is needed to apply the
results from Section~\ref{s:Equidistribution} to meander
count.

%-------------------------------------------------------------------
\subsection{Counting meanders with given number of minimal arcs}
\label{ss:Asymptotic:number:of:meanders:with:given:number:of:minimal:arcs}

A closed \textit{plane meander} is a smooth closed curve in
the plane transversally intersecting the horizontal line as
in Figure~\ref{fig:meander:types}. According to the
paper~\cite{Lando:Zvonkin} of S.~Lando and A.~Zvonkine
(serving as a reference paper in the literature on
meanders) the notion ``meander'' was suggested by
V.~I.~Arnold in~\cite{Arnold} though meanders were studied
already by H.~Poincar\'e~\cite{Poincare}. Meanders appear
in various contexts, in particular in physics,
see~\cite{DiFrancesco:Golinelli:Guitter}. The number of
meanders with $2N$ crossings is conjecturally asymptotic to
$const\cdot R^{2N}\cdot N^{\alpha}$, where $R^2\approx 12.26$
and $\alpha\approx-3.42$ are constants (we refer to~\cite{Jensen}
for the values of the constants). The conjectural exact
value $\alpha=-\frac{29+\sqrt{145}}{12}$ of the
corresponding critical exponent $\alpha$ in a
two-dimensional CFT with central charge $c=-4$ coupled to
gravity is given in~\cite{DiFrancesco:Golinelli:Guitter:2}.

\begin{figure}[hbt]
\includegraphics{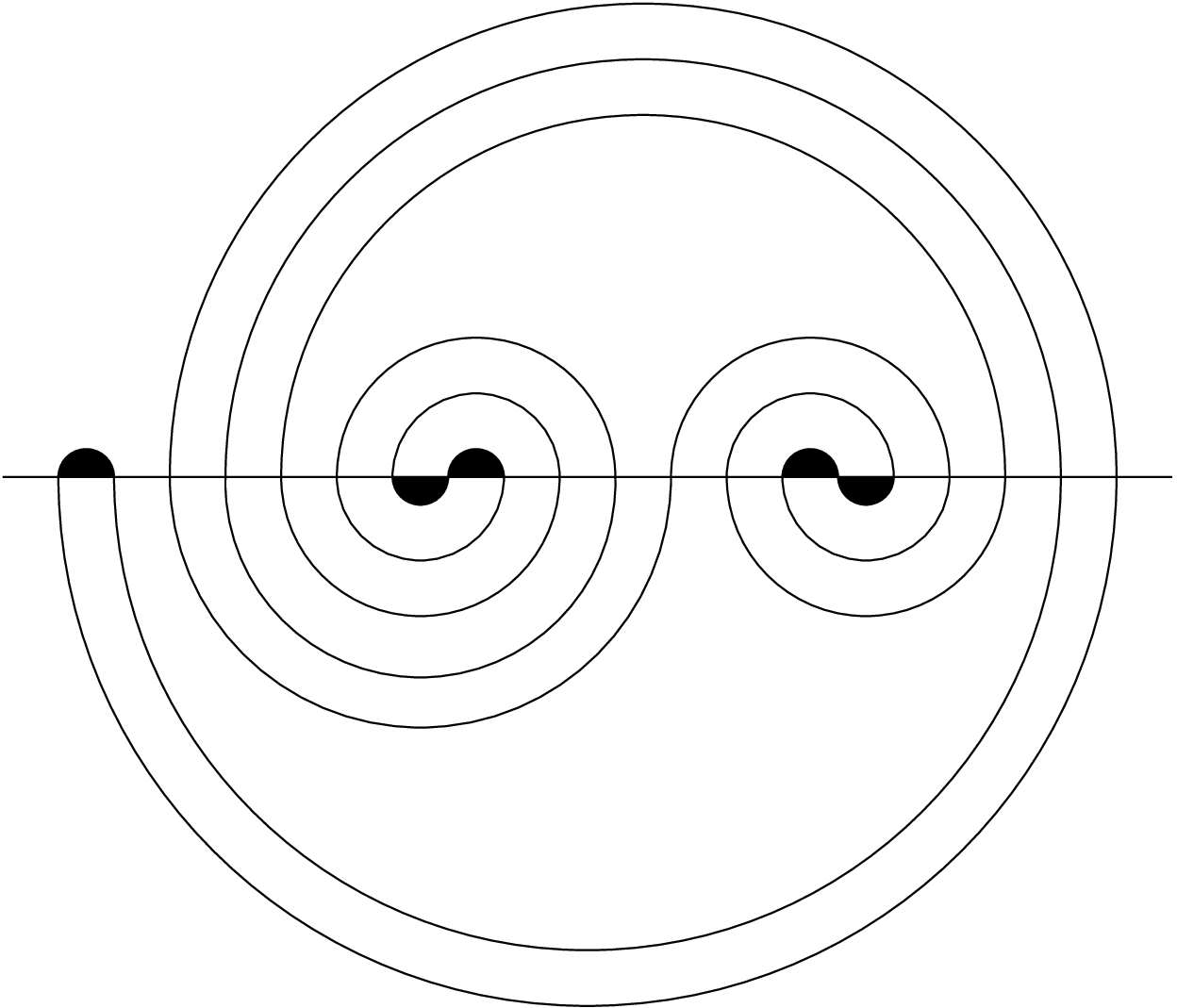}
\includegraphics{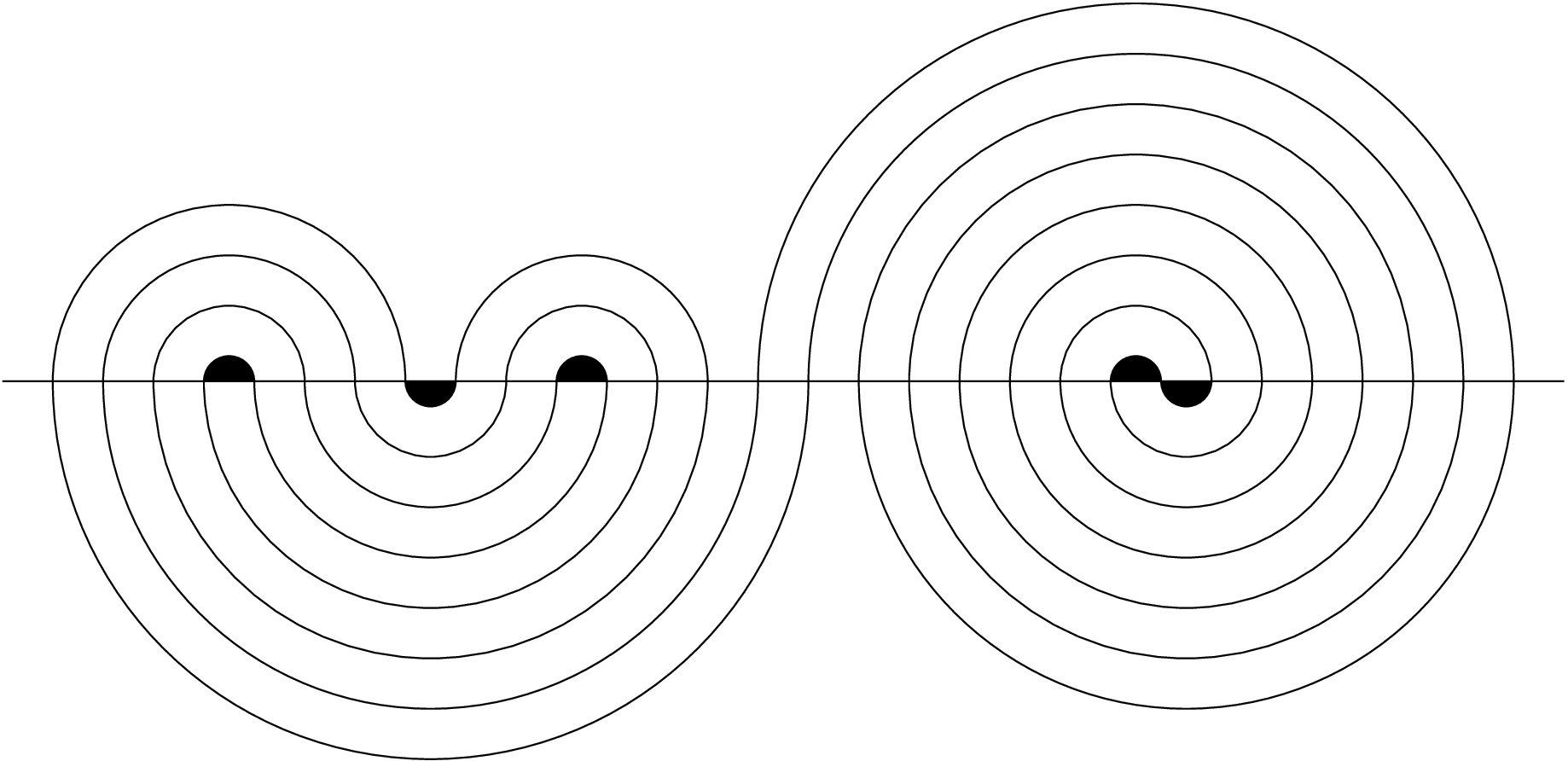}
\begin{picture}(260,20)(0,0) % (260,20)(0,0)
\put(-20,-105){Contributes to $\cM^+_5(N)$}
\put(160,-105){Contributes to $\cM^-_5(N)$}
\end{picture}
\vspace{110bp} %110
\caption{
\label{fig:meander:types}
Meander with a maximal arc (``rainbow'')
on the left and without one on the right.
Both meanders have $5$ minimal arcs (``pimples'').
}
\end{figure}

We say that a meander has a \textit{maximal arc}
(``\textit{rainbow}'' in terminology
of~\cite{Andersen:Chekhov:Penner:Reidys:Sulkowski}) if it
has an arc joining the leftmost and the rightmost crossings
with the horizontal line. Otherwise the meander \textit{does
not a have maximal arc}. The meander on the left of
Figure~\ref{fig:meander:types} has a maximal arc, while the
one on the right does not.

By \textit{minimal arc} (``\textit{pimple}'' in terminology
of~\cite{Andersen:Chekhov:Penner:Reidys:Sulkowski}, or
``\textit{internal arch}'' in terminology
of~\cite{DiFrancesco:Golinelli:Guitter}) we call an arc which does
not have any crossings inside. The areas between the horizontal line
and the minimal arcs of meanders are colored in black in
Figure~\ref{fig:meander:types}; each of the two meanders has $p=5$
minimal arcs.

By convention, in this paper we do not consider the trivial
meander represented by a circle. All other
meanders satisfy $p\ge 3$ when they have a maximal arc and
$p\ge 4$ when they do not.

Let $\cM^+_p(N)$ and $\cM^-_p(N)$ be the numbers of
meanders respectively with and without maximal arc (``rainbow'') and
having at most $2N$ crossings with the horizontal line and exactly
$p$ minimal arcs (``pimples''). We consider $p$ as a parameter and we
study the leading terms of the asymptotics of $\cM^+_p(N)$ and
$\cM^-_p(N)$ as $N\to+\infty$.

\begin{Theorem}
\label{th:meander:counting}
For any fixed $p$ the numbers $\cM^+_p(N)$ and $\cM^-_p(N)$
of meanders with $p$ minimal arcs (pimples) and with
at most $2N$ crossings have the following asymptotics as
$N\to+\infty$:
\begin{align}
\label{eq:asymptotics:with}
\cM^+_p(N) &=2(p+1)\cdot
\frac{\cyl_{1,1}\big(\cQ(1^{p-3},-1^{p+1})\big)}
{(p+1)!\, (p-3)!}
\cdot \frac{N^{2p-4}}{4p-8}\ +\ o(N^{2p-4})=
\\ \notag
&=
\frac{2}{ p!\, (p-3)!}
\left(\frac{2}{\pi^2}\right)^{p-2}\cdot
\binom{2p-2}{p-1}^2\cdot \frac{N^{2p-4}}{4p-8}\ +\ o(N^{2p-4})\,.
\\
\notag
\\
\label{eq:asymptotics:without}
\cM^-_p(N) &=
\frac{2\,\cyl_{1,1}\big(\cQ(1^{p-4},0,-1^p)\big)}
{p!\,(p-4)!}
\cdot \frac{N^{2p-5}}{4p-10}\ +\ o(N^{2p-5})=
\\ \notag
&=
\frac{4}{ p!\, (p-4)!}
\left(\frac{2}{\pi^2}\right)^{p-3}\cdot
\binom{2p-4}{p-2}^2
\cdot \frac{N^{2p-5}}{4p-10}\ +\ o(N^{2p-5})\,.
\end{align}
\end{Theorem}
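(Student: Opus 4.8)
The plan is to translate the meander-counting problem into a statement about integer quadratic differentials on the sphere, and then invoke the asymptotic volume and equidistribution results the paper advertises in its abstract.

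First I would set up the dictionary between meanders and flat geometry. A closed meander is a transverse pair consisting of the (compactified) horizontal line and the meander curve, i.e.\ a pair of transverse simple closed curves on $\CP$. Reading the horizontal line as the core of a single horizontal cylinder and the meander curve as the core of a single vertical cylinder of an integer quadratic differential, a meander with $p$ pimples is encoded as a genus-zero square-tiled surface carrying exactly one horizontal and one vertical cylinder, living in the stratum $\cQ(1^{p-3},-1^{p+1})$ when a rainbow is present and in $\cQ(1^{p-4},0,-1^p)$ when it is absent. The minimal arcs correspond to the simple poles, while the presence or absence of the maximal arc fixes the remaining singularity data; a degree check ($(p-3)-(p+1)=-4$ and $(p-4)-p=-4$) confirms both strata live on $\CP$, and their complex dimensions $2p-4$ and $2p-5$ already account for the observed powers $N^{2p-4}$ and $N^{2p-5}$.

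Second, I would express $\cM^\pm_p(N)$ as a weighted count of these one-horizontal–one-vertical-cylinder surfaces of bounded size, divided by the symmetry factors $(p+1)!\,(p-3)!$ (resp.\ $p!\,(p-4)!$) that record the unlabelled poles and zeros, and multiplied by the meander-specific combinatorial factor ($2(p+1)$ in the rainbow case, $2$ otherwise) coming from the placement of the maximal arc and the orientation data; note the simplification $\tfrac{2(p+1)}{(p+1)!}=\tfrac{2}{p!}$, which already matches the prefactor of the closed forms. The number of such surfaces grows like a constant times $N^{2p-4}$ (resp.\ $N^{2p-5}$), and summing the leading density up to $N$ yields the denominator $4p-8=2(2p-4)$ (resp.\ $4p-10=2(2p-5)$), where the factor $2$ reflects the quadratic-differential area normalization used in the Masur–Veech volume. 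The remaining constant is exactly $\cyl_{1,1}$ of the relevant stratum, which by definition isolates the contribution of configurations with a single horizontal and a single vertical cylinder; this produces the first equality in each of \eqref{eq:asymptotics:with} and \eqref{eq:asymptotics:without}.

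The heart of the argument is the asymptotic uncorrelation of horizontal and vertical separatrix diagrams established in the authors' earlier work. This is what guarantees that restricting to the $\cyl_{1,1}$ locus does not destroy the volume asymptotics and that the leading behaviour is governed by $\cyl_{1,1}$ rather than by the full Masur–Veech volume. I expect this to be the main obstacle: one must show that the error terms are genuinely $o(N^{2p-4})$ and $o(N^{2p-5})$, that no lower-dimensional boundary strata contribute to the leading order, and that multiple-cylinder configurations (which are combinatorially generic but do not arise from single closed meander curves) do not leak into the count.

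Finally, to pass from the first to the second equality I would substitute the explicit genus-zero evaluations of the cylinder contributions, namely $\cyl_{1,1}\big(\cQ(1^{p-3},-1^{p+1})\big)=\big(\tfrac{2}{\pi^2}\big)^{p-2}\binom{2p-2}{p-1}^2$ and $\cyl_{1,1}\big(\cQ(1^{p-4},0,-1^p)\big)=2\big(\tfrac{2}{\pi^2}\big)^{p-3}\binom{2p-4}{p-2}^2$, furnished by the recent Masur–Veech volume formulas. After this substitution, matching the factorials and combinatorial prefactors is a routine simplification that yields the stated closed forms.
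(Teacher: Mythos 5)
Your proposal follows essentially the same route as the paper: encode meanders as genus-zero pillowcase covers with a single horizontal and a single vertical cylinder, attach the combinatorial prefactors $2(p+1)$ (resp.\ $2$) and the labelling factorials, argue that only the principal stratum (with a marked regular point in the rainbow-free case) contributes to leading order, invoke the asymptotic uncorrelation of horizontal and vertical cylinder decompositions from \cite{DGZZ} to identify the constant as $\cyl_{1,1}=\cyl_1^2/\Vol$, and substitute the explicit genus-zero evaluations. The values of $\cyl_{1,1}$ you quote agree with Corollary~\ref{cor:principal}, so the argument is correct and matches the paper's proof in all essential steps.
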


The quantities $\cyl_{1,1}\big(\cQ(1^{p-3},-1^{p+1})\big)$
and $\cyl_{1,1}\big(\cQ(1^{p-4},0,-1^p)\big)$ in the above
formulae are related to Masur--Veech volumes of the moduli
space of meromorphic quadratic differentials. Their
definition and role would be discussed in
Section~\ref{s:strategy}. Theorem~\ref{th:meander:counting}
is proved in Section~\ref{ss:Proofs:number:of:meanders}
with the exception of the explicit expressions for these
two quantities evaluated in Corollary~\ref{cor:principal}
in Section~\ref{s:Computations:for:square:tiled:surfaces}.

Note that the number $\cM^+_p(N)$ grows as $N^{2p-4}$ while
$\cM^-_p(N)$ grows as $N^{2p-5}$. This means that for large
$N$ all but a negligible fraction of meanders having any
given number $p$ of minimal arcs (pimples) do have a
maximal arc (rainbow) as the left one in
Figure~\ref{fig:meander:types}.

As the reader could observe in the statement of
Theorem~\ref{th:meander:counting}, our approach to counting
meanders differs from the traditional one: we fix the
combinatorics of the meander and then count the asymptotic
number of meanders of chosen combinatorial type as the
number of intersections $N$ tends to infinity. Our settings
can be seen as a zero temperature limit in the
thermodynamical sense, where the complexity of a meander is
measured in terms of the number of minimal arcs. Namely,
let us count meanders with the weight $e^{-\beta p}$, where
$\beta > 0$ is a parameter and $p$ is the number of minimal
arcs. Then $\beta=0$ corresponds to the standard count of
meanders. In the ``zero temperature limit''
$\beta\to+\infty$ meanders with few minimal arcs, as
considered in this paper, become predominant.

Applying Stirling's formula we get the following asymptotics for the
coefficients in formulae~\eqref{eq:asymptotics:with}
and~\eqref{eq:asymptotics:without} for large values of parameter $p$:
\begin{align*}
\frac{2}{ p!\, (p-3)!}
\left(\frac{2}{\pi^2}\right)^{p-2}\cdot
\binom{2p-2}{p-1}^2\cdot\frac{1}{4p-8}
&\ \sim\ \frac{\pi^2}{256}\cdot \left(\frac{32 e^2}{\pi^2 p^2}\right)^p
\hspace*{15pt} \text{ for }p\gg1\,.
\\
\frac{4}{ p!\, (p-4)!}
\left(\frac{2}{\pi^2}\right)^{p-3}\cdot
\binom{2p-4}{p-2}^2\cdot\frac{1}{4p-10}
&\ \sim\ \frac{\pi^2 e^2}{128 p}\cdot \left(\frac{32 e^2}{\pi^2 p^2}\right)^{p-1}
\text{ for }p\gg1\,.
\end{align*}
(we again recall that in our setting we always assume that $N\gg p$).

In Section~\ref{ss:Proofs:number:of:meanders} we provide
an analogous statement, Theorem~\ref{gth:meanders:fixed:stratum},
which counts meanders in the setting where the combinatorial type is
specified in a more detailed way.

%----------------------------------------------------------------------
\subsection{Counting meanders  with given reduced arc systems}
\label{ss:Meanders:and:arc:systems}

Extending the horizontal segment of a plane meander to the
infinite line and passing to a one-point compactification
of the plane we get a meander on the 2-sphere. A meander on
the sphere is a pair of transversally intersecting labeled
simple closed curves. It will be always clear from the
context whether we consider meanders in the plane or on the
sphere. Essentially, we adhere to the following dichotomy:
enumerating meanders, as in the previous Section, we work
with meanders in the plane, while considering frequencies
of pairs of simple closed curves among more complicated
pairs of multicurves, as in the current Section, we work
with meanders on the sphere.

\begin{figure}[hbt]
\includegraphics{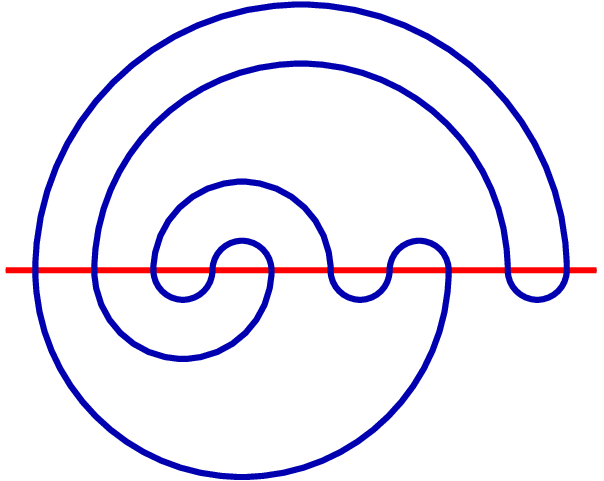}
\includegraphics{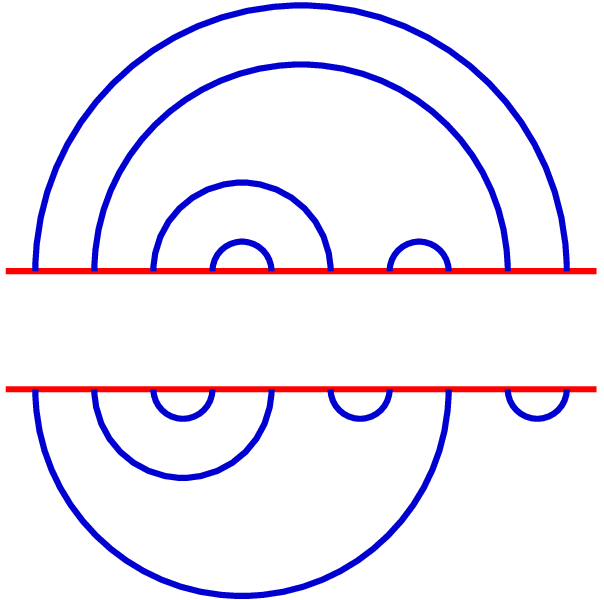}
\includegraphics{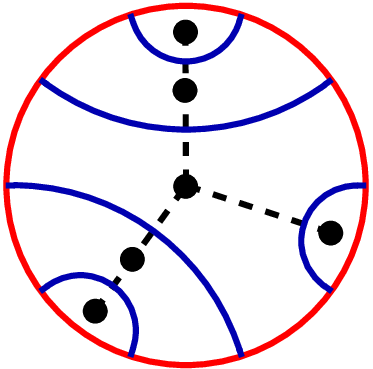}
\includegraphics{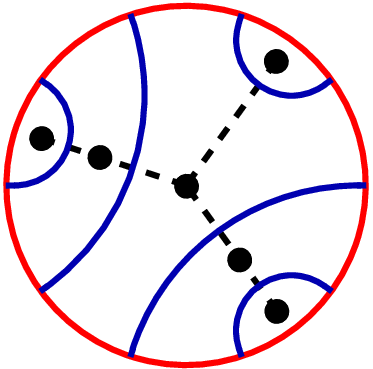}
 % Left picture below ------------------------
\begin{picture}(0,0)(161.5,-99.5) % (161.5,-109.5)
\put(6.5,-168){\small 1}
\put(18,-168){\small 2}
\put(29.5,-168){\small 3}
\put(40.5,-168){\small 4}
\put(57.5,-168){\small 5}
\put(68.8,-168){\small 6}
\put(76.5,-168){\small 7}
\put(91.5,-168){\small 8}
\put(103,-168){\small 9}
\put(114.5,-168){\small 10}
\end{picture}
   %
 % Middle picture below ----------------------
\begin{picture}(0,0)(36.7,-102) % (0,0)(36.7,-112)
 % Top line
\put(6.5,-168){\small 1}
\put(18,-168){\small 2}
\put(29.5,-168){\small 3}
\put(40.5,-168){\small 4}
\put(52.5,-168){\small 5}
\put(63.5,-168){\small 6}
\put(75,-168){\small 7}
\put(86.5,-168){\small 8}
\put(97.5,-168){\small 9}
\put(107,-168){\small 10}
\end{picture}
\begin{picture}(0,0)(40,-89) % (0,0)(40,-99)
 % Bottom line
\put(6.5,-168){\small 1}
\put(18,-168){\small 2}
\put(29.5,-168){\small 3}
\put(40.5,-168){\small 4}
\put(52.5,-168){\small 5}
\put(63.5,-168){\small 6}
\put(75,-168){\small 7}
\put(86.5,-168){\small 8}
\put(97.5,-168){\small 9}
\put(107,-168){\small 10}
\end{picture}
   %
 % Right picture below
\begin{picture}(0,0)(-67,-135) % (0,0)(-67,-145)
 % Top circle
\put(44,-135){\small 1}
\put(24.5,-149){\small 2}
\put(18,-173){\small 3}
\put(25,-195){\small 4}
\put(45,-210.5){\small 5}
\put(67,-210.5){\small 6}
\put(94,-173){\small 7}
\put(87.5,-195){\small 8}
\put(87,-149){\small 9}
\put(65,-135){\small 10}
\end{picture}
\begin{picture}(0,0)(-64.2,-60) % (0,0)(-64.2,-70)
 % Bottom circle
\put(24.5,-149){\small 4}
\put(17.5,-173){\small 3}
\put(24,-195){\small 2}
\put(44,-211){\small 1}
\put(65,-211){\small 10}
\put(87.5,-195){\small 9}
\put(94,-173){\small 8}
\put(87,-149){\small 7}
\end{picture}
\vspace{150bp}
\caption{
\label{fig:meander}
A meander on the left. The associated pair of arc systems
in the middle. The same arc systems on the discs and the associated
dual  trees on the right.
}
\end{figure}

Each meander defines a pair of arc systems in discs as in
Figure~\ref{fig:meander}. Arcs in each of the two arc
systems do not intersect pairwise.
An arc system on the disc (also known as a
``chord diagram'') can be encoded by the dual tree, see the trees in
dashed lines on the right pictures in Figure~\ref{fig:meander}.
Namely, the vertices of the tree correspond to the faces in which the
arc system cuts the disc; two vertices are joined by an edge if and
only if the corresponding faces have a common arc. It is convenient to
simplify the dual tree by forgetting all vertices of valence two. We call the resulting tree the
\textit{reduced} dual tree.

It is much easier to count arc systems (for example, arc systems
sharing the same reduced dual tree). However, this does not simplify
meanders count since identifying a pair of arc systems with the
same number of arcs by the common equator, we sometimes get a meander
and sometimes a curve with several connected components, see
Figure~\ref{fig:pairs:of:arc:systems}.

\begin{figure}[htb]
\includegraphics{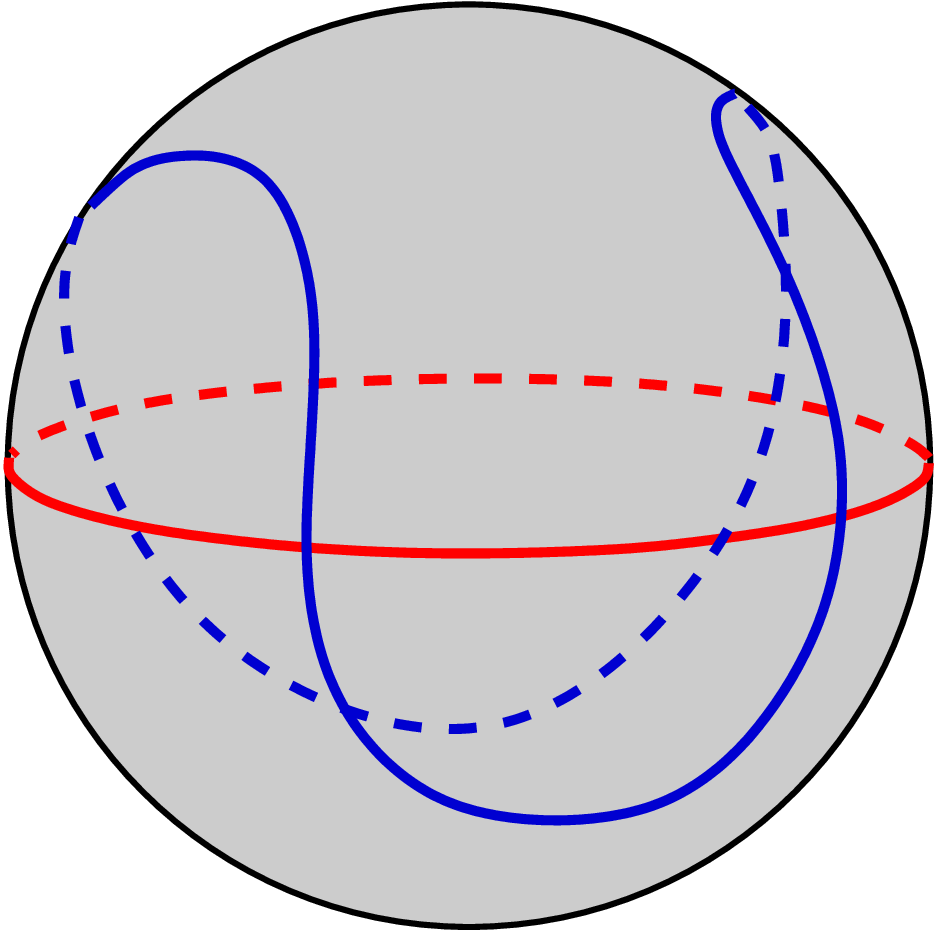}
\includegraphics{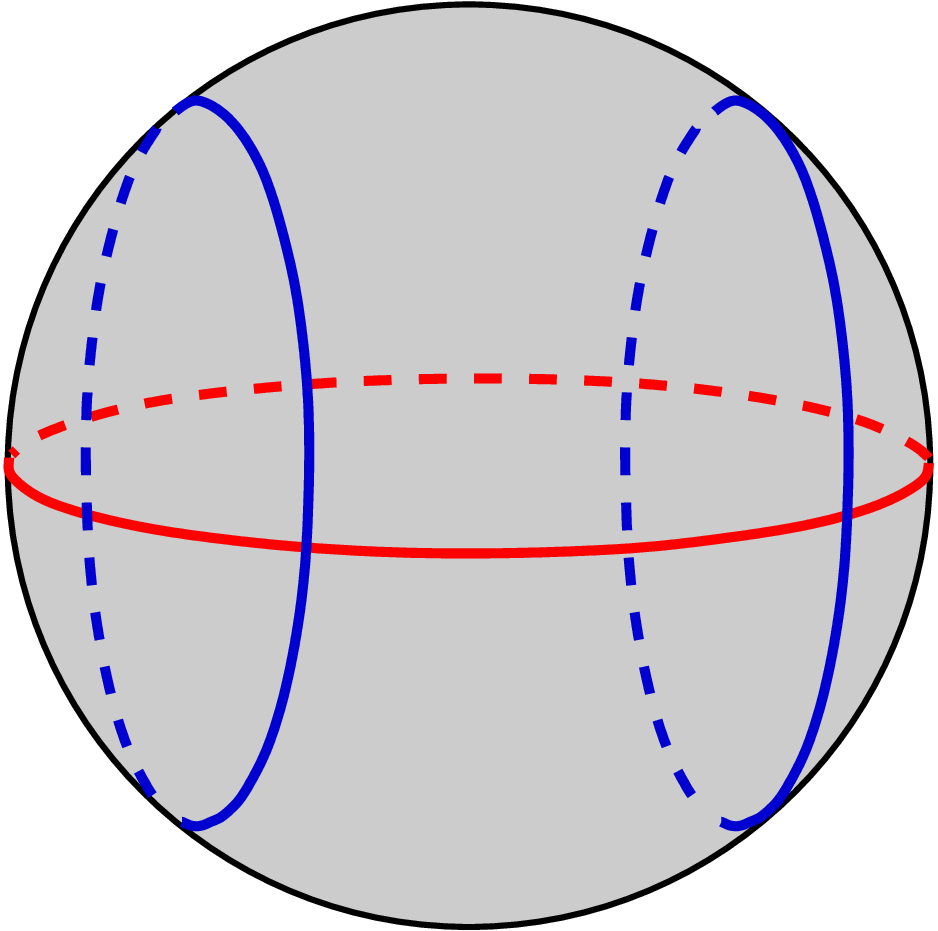}
\vspace{80bp}
\caption{
\label{fig:pairs:of:arc:systems}
Gluing two hemispheres with arc systems along the common
equator we may get either a single simple closed curve (as on the left
picture) or a multicurve with several connected components (as on the
right picture).
}
\end{figure}

We now consider the more specialized setting where we fix a pair of
plane trees and count meanders whose corresponding pair of arc systems
have these given dual trees. Let us mention that everywhere in this
paper we consider only \textit{plane trees}, that is, trees
embedded into the plane.

Let $(\cT_{top}, \cT_{bottom})$ be a pair of plane trees
with no vertices of valence 2. We consider the arc
system with the same number of arcs $n\le N$ on a labeled pair of oriented
discs having $\cT_{top}$ and $\cT_{bottom}$ as reduced dual trees. We
draw the arc system corresponding to $\cT_{top}$ on the northern
hemisphere, and the arc system corresponding to $\cT_{bottom}$ on
the southern hemisphere. There are $2n$ ways (up to isotopy) to identify
the boundaries of two hemispheres
into the sphere in such way that the endpoints of the arcs match.
We consider all possible triples
\begin{equation*}
 % \label{eq:triple}
(\text{$n$-arc system of type $\cT_{top}$;
 $n$-arc system of type $\cT_{bottom}$;
 identification})
\end{equation*}
as described above for all $n\le N$. Define
\begin{equation}
\label{eq:p:connected:iota:kappa}
\prob_{\mathit{connected}}(\cT_{top}, \cT_{bottom}; N):=
\frac{\text{
number of triples giving rise to meanders
}}
{\text{
total number of different triples
}}\,.
\end{equation}

\begin{Theorem}
\label{th:trivalent:trees:connected:proportion}
For any pair of \textit{trivalent} plane trees
$\cT_{bottom},\cT_{top}$,
having the total number $p$ of leaves
(vertices of valence one)
the following limit exists:
\begin{multline}
\label{eq:p1:fixed:number:of:poles}
\lim_{N\to+\infty} \prob_{\mathit{connected}}(\cT_{bottom},\cT_{top}; N)
\ =\
\prob_1(\cQ(1^{p-4},-1^p))
\ =\\=\
\frac{\cyl_1(\cQ(1^{p-4},-1^p))}
{\Vol_1(\cQ_1(1^{p-4},-1^p))}
\ =\
\frac{1}{2}\left(\frac{2}{\pi^2}\right)^{p-3}\cdot
\binom{2p-4}{p-2}\,.
\end{multline}
\end{Theorem}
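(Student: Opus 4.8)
The plan is to run the statement through the dictionary of Section~\ref{s:strategy} between meanders, pairs of transverse simple closed curves, and integer (square-tiled) points in strata of meromorphic quadratic differentials, and then to reduce the connectedness question to the asymptotic equidistribution of square-tiled surfaces together with the asymptotic independence of their horizontal and vertical separatrix diagrams. First I would make the correspondence precise. A triple consisting of an $n$-arc system of type $\cT_{top}$ on the northern hemisphere, an $n$-arc system of type $\cT_{bottom}$ on the southern hemisphere, and an isometric identification of the two equators encodes a Jenkins--Strebel quadratic differential on $\CP$, hence an integer point in a single stratum. Forgetting valence-two vertices when passing to the reduced dual tree corresponds to forgetting regular points, while thickening an edge into a band of $w_i\ge 1$ parallel arcs records the integer width of a cylinder; thus arc systems with reduced trees $\cT_{top},\cT_{bottom}$ and $n$ arcs correspond to positive integer edge weights with $\sum_i w_i=\sum_j w'_j=n$. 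The local models pin down the stratum: a leaf produces a simple pole and a trivalent vertex produces a simple zero. Writing $L_1,L_2$ for the numbers of leaves of the two trees, with $L_1+L_2=p$, a trivalent tree with $L$ leaves has $L-2$ internal vertices, so the pair carries $p$ leaves and $(L_1-2)+(L_2-2)=p-4$ trivalent vertices; hence the glued differentials lie in $\cQ(1^{p-4},-1^p)$, and the pair of trees fixes their horizontal separatrix diagram.

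Next I would reformulate the probability in~\eqref{eq:p:connected:iota:kappa}. The equator is always a single circle, so the glued curve (the union of all arcs) is connected exactly when the resulting differential has a single cylinder in the transverse direction. Therefore the numerator counts the square-tiled surfaces of the prescribed horizontal combinatorial type that are one-cylinder in the vertical direction and have at most $N$ squares, and the denominator counts all square-tiled surfaces of that horizontal type with at most $N$ squares. Both counts grow polynomially in $N$ and are governed, to leading order, by the relevant contributions to the Masur--Veech volume of $\cQ(1^{p-4},-1^p)$.

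The crux, and the step I expect to be the main obstacle, is to pass from these \emph{conditional} counts (vertical one-cylinder, given the fixed horizontal diagram determined by $\cT_{top},\cT_{bottom}$) to the \emph{unconditional} one-cylinder probability $\prob_1(\cQ(1^{p-4},-1^p))=\cyl_1/\Vol$. This is precisely where the previously established asymptotic independence of horizontal and vertical separatrix diagrams of integer quadratic differentials is used: to leading order the count with a fixed horizontal diagram $\mathcal{D}$ and a prescribed vertical property factorizes as a horizontal factor $c(\mathcal{D})$ times the corresponding vertical count, so the common factor $c(\mathcal{D})$ cancels in the ratio and the limit equals the ratio of the vertical one-cylinder contribution to the full vertical volume, namely $\cyl_1/\Vol$. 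In particular the answer depends on the chosen trees only through the total number of leaves $p$ that fixes the stratum, which explains why the limit does not see $\cT_{top},\cT_{bottom}$ individually.

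Having identified the limit as $\prob_1(\cQ(1^{p-4},-1^p))$, it remains to evaluate $\cyl_1(\cQ(1^{p-4},-1^p))$ and $\Vol\cQ(1^{p-4},-1^p)$ explicitly. This is the computation carried out through the enumeration of pillowcase covers and recorded in Corollary~\ref{cor:principal}; substituting these two values yields the closed form $\tfrac{1}{2}\left(2/\pi^2\right)^{p-3}\binom{2p-4}{p-2}$ and completes the proof.
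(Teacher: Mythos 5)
Your proposal is correct and follows essentially the same route as the paper: it translates triples of arc systems into pillowcase covers with a prescribed horizontal separatrix diagram, identifies connectedness with having a single vertical cylinder, invokes the asymptotic independence of horizontal and vertical cylinder decompositions (Theorem~1.19 of~\cite{DGZZ}) to drop the horizontal conditioning, and then substitutes the explicit values of $\cyl_1$ and $\Vol$ from Corollary~\ref{cor:principal}. The only cosmetic difference is that the paper factors the argument through the more general Theorem~\ref{th:any:trees:connected:proportion} for arbitrary plane trees before specializing to the trivalent case.
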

The quantity $\cyl_{1}\big(\cQ(1^{p-4},-1^p)\big)$ in the
above formula is related to Masur--Veech volume of the
moduli space of meromorphic quadratic differentials. Its
definition and role will be discussed in
Section~\ref{s:strategy}.

The quantity $\prob_1(\cQ(1^{p-4},-1^p))$
can be seen as the asymptotic probability
that a random gluing of a pair of random arc systems
with $p$ minimal arcs produces a meander. To be more accurate,
one should rather speak of asymptotic \textit{density} of meanders
among the resulting multicurves.

Theorem~\ref{th:trivalent:trees:connected:proportion} is proved at the end of
Section~\ref{ss:Proofs:fractions}.We will actually
state and prove a more general statement,
Theorem~\ref{th:any:trees:connected:proportion}, where not only trivalent trees
are considered.

The fact that this asymptotic density is nonzero is
already somehow unexpected. For example, for the pair of
trees as on the right side of Figure~\ref{fig:meander} each
of the \textit{reduced} trees contains a single vertex of
valence three and three vertices of valence one (three
leaves), so we have six leaves in total. The corresponding
asymptotic density for such pair of reduced trees is
equal to
$$
\prob_{\mathit{connected}}(\hspace*{14pt},\hspace*{14pt})=\frac{280}{\pi^6}\approx 0.291245\,,
$$
\mbox{
\includegraphics{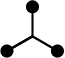}
\includegraphics{meandre3_up.eps}
}

\vspace*{-10pt}
\noindent
which is not even close to 0.

Stirling's formula gives the following asymptotics for
$\prob_1(\cQ(1^{p-4},-1^p))$ for large values of parameter $p$:
$$
\prob_1(\cQ(1^{p-4},-1^p))
=
\frac{1}{2}\left(\frac{2}{\pi^2}\right)^{p-3}\cdot
\binom{2p-4}{p-2}
\sim
\frac{2}{\sqrt{\pi p}}\cdot\left(\frac{8}{\pi^2}\right)^{p-3}
\ \text{for}\ p\gg 1
$$
(we recall that in our setting we always assume that $N\gg p$).

Another unexpected fact that follows from
Theorem~\ref{th:trivalent:trees:connected:proportion} is that the way the leaves
(univalent vertices) are distributed between the two trees is
irrelevant: the answer depends only on the total number $p$ of
leaves. This observation suggests an alternative (and much less
restrictive) way to fix combinatorics of the meanders. Namely, we can
fix only the total number $p$ of leaves (vertices of valence one) of
the two trees together, where $p\ge 4$.

\begin{Theorem}
\label{th:p:fixed:number:of:leaves:general}
Let $p \geq 4$.
The density $\prob_{\mathit{connected}}(p; N)$ of meanders
obtained by all possible identifications of all arc systems with at most $N$ arcs represented by
all possible pairs of (not necessarily trivalent) plane trees having the total number $p$ of leaves
(vertices of valence one) has the same limit $\prob_1(\cQ(1^{p-4},-1^p))$
as the density $\prob_{\mathit{connected}}(\cT_{bottom},\cT_{top}; N)$
of meanders represented by any individual pair of trivalent
plane trees with the total number $p$ of leaves:
\begin{multline}
\label{eq:p1:fixed:number:of:poles:general}
\lim_{N\to+\infty}
\prob_{\mathit{connected}}(p; N)
\ =\
\prob_1(\cQ(1^{p-4},-1^p))
\ =\\=\
\frac{\cyl_1(\cQ(1^{p-4},-1^p))}
{\Vol_1 \cQ_1(1^{p-4},-1^p)}
\ =\
\frac{1}{2}\left(\frac{2}{\pi^2}\right)^{p-3}\cdot
\binom{2p-4}{p-2}\,.
\end{multline}
The same statement with the same limit
is valid if we consider pairs of
plane trees having \textit{at most} $p$ leaves for the two trees
together instead of \textit{exactly} $p$ leaves.
\end{Theorem}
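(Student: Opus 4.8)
The plan is to deduce Theorem~\ref{th:p:fixed:number:of:leaves:general} from the per-pair statement Theorem~\ref{th:trivalent:trees:connected:proportion} by a weighted-averaging argument, exploiting the fact that among all pairs of trees with a prescribed total number $p$ of leaves the trivalent ones dominate the relevant counts by a full order of magnitude in $N$. First I would record $\prob_{\mathit{connected}}(p;N)$ as a ratio of aggregated counts. Abbreviating an admissible pair $(\cT_{top},\cT_{bottom})$ of plane trees without valence-two vertices and with total number of leaves $p$ by a single symbol $T$, let $A_T(N)$ be the number of triples (two $n$-arc systems of the prescribed types together with an identification, $n\le N$) that produce a meander, and let $B_T(N)$ be the total number of such triples. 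Since $p$ is fixed there are only finitely many such $T$, and
\[
\prob_{\mathit{connected}}(p;N)
=\frac{\sum_{T}A_{T}(N)}{\sum_{T}B_{T}(N)}
=\sum_{T} w_{T}(N)\,\frac{A_{T}(N)}{B_{T}(N)},
\qquad w_{T}(N):=\frac{B_{T}(N)}{\sum_{T'}B_{T'}(N)} .
\]
Thus $\prob_{\mathit{connected}}(p;N)$ is a convex combination of the per-pair frequencies $A_T(N)/B_T(N)$, with nonnegative weights $w_T(N)$ summing to $1$.

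Second I would compare growth orders. Through the horizontal/vertical separatrix construction of Section~\ref{s:strategy}, a pair $T$ with total $p$ leaves whose internal vertices have valences $v_1,v_2,\dots$ corresponds to the stratum $\cQ(\mu,-1^p)$, where $\mu$ is the partition of $p-4$ with parts $v_i-2$; the trivalent pairs are exactly those with all $v_i=3$, i.e.\ those realizing the principal stratum $\cQ(1^{p-4},-1^p)$, while any internal vertex of valence $\ge4$ merges simple zeros into a higher-order one and drops us into a stratum of strictly smaller dimension. A direct Euler-characteristic count shows that the total number of edges of the pair $T$ equals $p+m-2$, where $m$ is the total number of internal vertices, and this coincides with $\dim_{\mathbb{C}}\cQ(\mu,-1^p)$; it is therefore strictly maximized, at the value $2p-6$, precisely by the trivalent pairs (for which $m=p-4$), and is strictly smaller for every pair carrying a vertex of valence $\ge4$. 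Since the triple-counts $B_T(N)$ are quasi-polynomials in $N$ whose degree is governed by the dimension of the associated stratum, $B_T(N)$ has strictly smaller $N$-degree for non-trivalent pairs than for trivalent ones. Consequently $w_T(N)\to0$ for every non-trivalent pair, and the total weight concentrates asymptotically on the (finitely many) trivalent pairs, whose weights add up to $1$ in the limit.

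Finally I would pass to the limit in the convex combination. For every trivalent pair Theorem~\ref{th:trivalent:trees:connected:proportion} gives $A_T(N)/B_T(N)\to\prob_1(\cQ(1^{p-4},-1^p))$, the \emph{same} value irrespective of how the $p$ leaves are split between the two trees. For non-trivalent pairs I need no per-pair limit at all: the crude bound $0\le A_T(N)/B_T(N)\le1$ together with $w_T(N)\to0$ already forces their contribution to vanish. Hence the surviving trivalent weights, summing to $1$, all multiply the single number $\prob_1(\cQ(1^{p-4},-1^p))$, and $\prob_{\mathit{connected}}(p;N)\to\prob_1(\cQ(1^{p-4},-1^p))$; the closed-form evaluation is then the one already recorded in Theorem~\ref{th:trivalent:trees:connected:proportion}. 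The main obstacle is the degree comparison of the second step: it rests on knowing both that the triple-counts are quasi-polynomial with degree equal to the complex dimension of the corresponding stratum and that passing to a higher-valence vertex strictly lowers that dimension. Everything else is the soft bookkeeping of a degenerating convex combination.
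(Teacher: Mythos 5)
Your proposal is correct and follows essentially the same route as the paper: the paper likewise sums numerators and denominators over the finitely many pairs of trees, uses the dimension count $d=\ell(\nu)+|\nu|+2$ to show that every non-trivalent pair lands in a stratum of dimension strictly below $2p-6$ and hence contributes $o(N^{2p-6})$, and then invokes Theorem~\ref{th:trivalent:trees:connected:proportion} for the common limit over trivalent pairs. Your convex-combination packaging of the ratio is a cosmetic variant of the paper's direct comparison of leading terms.
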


Theorem~\ref{th:p:fixed:number:of:leaves:general}
is proved at the end of
Section~\ref{s:From:arc:systems:and:meanders:to:square:tiled:surfaces}.
The proof is based on the fact that the contribution of any pair of
trees where at least one of the trees has a vertex of valence $4$ or
higher is negligible in comparison with the contribution of any pair
of trivalent trees.

\begin{Remark}
All the results of the Introduction concerning square-tiled
surfaces in strata of quadratic differentials of genus 0 generalize to
higher genera and even to the general situation of arithmetic invariant
suborbifolds (see Section~\ref{ss:background:flat:surf}). In this latter
setting, some subtle finiteness issues arise and will be treated elsewhere
to avoid overloading the current paper.
\end{Remark}
\medskip

%----------------------------------------------------------------------
\noindent\textbf{Acknowledgements.}
%\subsection{Acknowledgements}
   %
The vague idea that counting results concerning linear
involutions might have applications to meanders was
discussed by the authors in independent conversations with
M.~Kontsevich and with M.~Mirzakhani. We are grateful to
them for these discussions and for their insights in
enumerative geometry that were very inspiring for us.

We thank MPIM in Bonn, where part of this work was
performed, for providing us with friendly and stimulating
environment. We thank J.~Athreya
for helpful suggestions that allowed to improve the presentation.
We are grateful to A.~Eskin for
the important advise on Moore's ergodicity theorem.
We thank E.~Duriev for reading carefully the manuscript and
for useful comments.

%####################################################################
%####################################################################
%####################################################################

\section{Outline of counting technique}
\label{s:strategy}

%----------------------------------------------------------------------
\subsection{Pairs of transverse multicurves on the
sphere as square-tiled surfaces}
\label{ss:pairs:of:multicurves:as:square:tiled:surfaces}
A \textit{multicurve} on the sphere is a collection of pairwise
nonintersecting smooth simple closed curves.

\begin{Definition}
\label{def:pair:of:multicurves}
We say that two multicurves on the sphere form a
\textit{transverse connected pair} if any intersection
between any connected component of the first curve and any
connected component of the second curve is transverse and
if in addition the union of the two multicurves is connected.
\end{Definition}

Having a transverse connected pair of multicurves we always assume
that the pair is ordered. By convention, the first multicurve is
called ``horizontal'' and the second one ``vertical''. We
consider natural equivalence classes of
transverse connected pairs of multicurves
up to diffeomorphisms preserving the orientation of the
 sphere and respecting horizontal and vertical labelling.

Let $\cG$ be the graph defined by a transverse connected
pair of multicurves. The vertices of $\cG$ are
intersections of the multicurves, so all vertices of $\cG$
have valence $4$. Hence, all faces of the dual graph
$\cG^\ast$ are $4$-gons. The edges of $\cG^\ast$ dual to
horizontal edges of $\cG$ will be called vertical, and
those dual to the vertical edges of $\cG$ will be called
horizontal. By construction, any two non-adjacent edges of
any face of $\cG^\ast$ are either both horizontal or both
vertical.

Choosing identical metric squares as
faces of $\cG^\ast$ we get a
\textit{square-tiled surface}.

\begin{figure}[htb]
   %
   %  PILLOWCASE
   %
\includegraphics{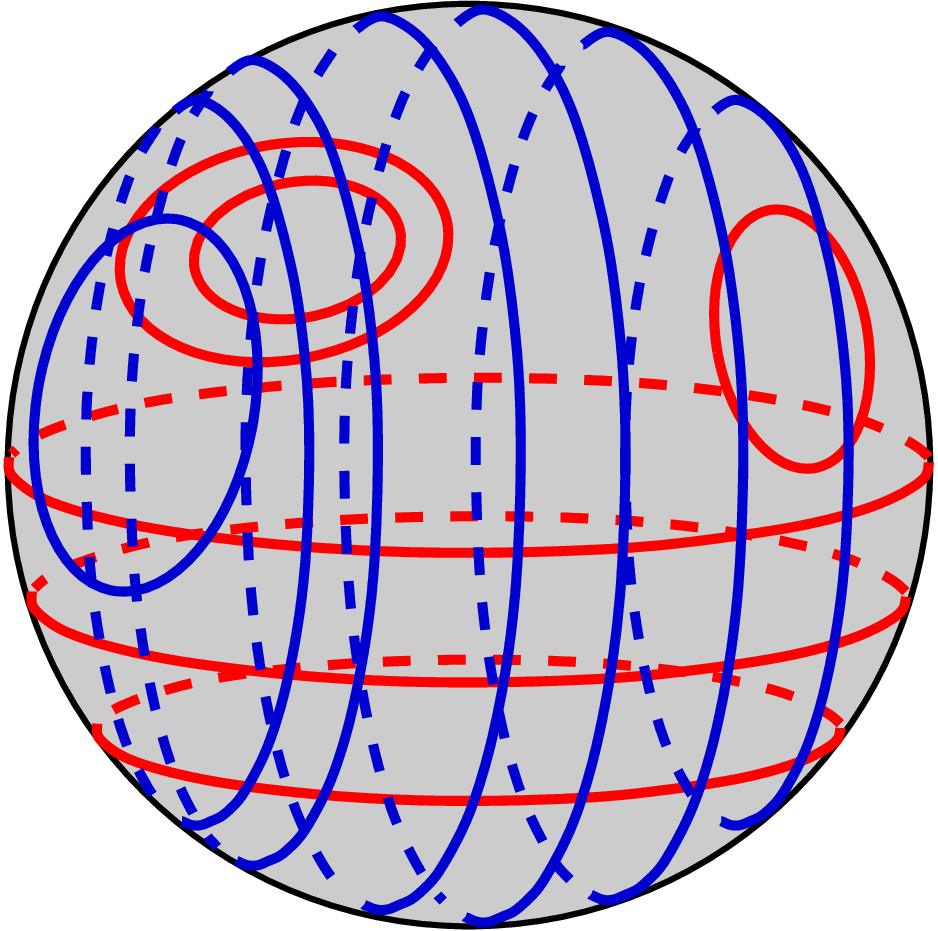}
\includegraphics{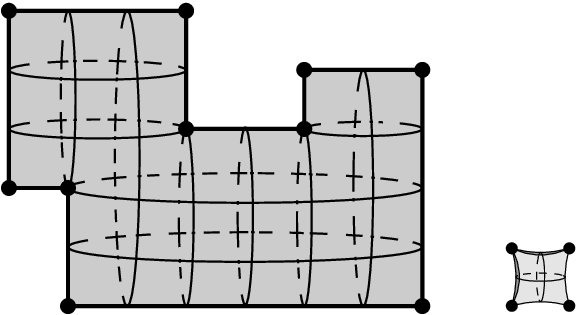}
\begin{picture}(0,0)(0,0)
\put(-10,-17){$2\gamma_1$}
\put(108,-25){$\gamma_2$}
\put(-6,-39){$\gamma_3$}
\put(4,-60){$2\gamma_4$}
\end{picture}
\begin{picture}(0,0)(150,0)
\put(-6,-8){$2\gamma_1$}
\put(80,-24){$\gamma_2$}
\put(-10,-36){$\gamma_3$}
\put(-10,-56){$2\gamma_4$}
\end{picture}
\vspace{70pt}
\caption{
\label{fig:square:pillow}
The dual graph to a transverse connected pair of
multicurves on a sphere defines a square-tiled surface.
In this particular example the
horizontal multicurve has the form
$2\gamma_1+\gamma_2+\gamma_3+2\gamma_4$; its primitive
components $\gamma_1,\dots,\gamma_4$ represent the four maximal
horizontal cylinders; the weights represent the numbers of
horizontal bands of squares in the corresponding cylinders.}
\end{figure}

We have proved the following statement.
\begin{Proposition}
\label{prop:pairs:multicurves:square:tiled:surfaces}
There is a natural one-to-one correspondence between
transverse connected pairs of multicurves on the sphere
and square-tiled surfaces of genus 0, where the square tiling is
given by the dual graph of the graph formed by the union of two
multicurves.
\end{Proposition}

A square-tiled surface defines a meromorphic quadratic differential
$q$ having the form $(dz)^2$ in the natural coordinate on each square.
Simple poles of $q$ correspond to bigons of $\cG$; zeroes of order $\degofz\in\N$
correspond to  $(2\degofz+4)$-gons.

\begin{Remark}
\label{rm:multicurves}
Note that speaking of a ``multicurve'' one usually assumes
that the components of a multicurve are neither
contractible nor peripheral (i.e. not freely homotopic to a
boundary component). Given a transverse connected pair of
multicurves in the sense of
Definition~\ref{def:pair:of:multicurves}
make a single puncture at
every bigon, $6$-gon, $8$-gon, etc (equivalently, puncture
every conical singularity of the associated square-tiled
surface). By construction, each component of the horizontal
(respectively vertical) multicurve is neither contractible
nor peripheral on the resulting punctured surface, so we get
a multicurve in the usual sense.

Traditionally, one represents a multicurve as a weighted
sum $\gamma=h_1\gamma_1+\dots+h_m\gamma_m$ of the primitive
components $\gamma_1,\dots,\gamma_m$, which are already not
pairwise freely homotopic on the punctured surface, and
where the positive integer weight $h_i$ encodes the number
of components of the multicurve $\gamma$ freely homotopic
to the primitive component $\gamma_i$ for $i=1,\dots,m$. In
our case the primitive components $\gamma_i$ and the
weights $h_i$ have particularly transparent interpretation
in terms of geometry of the associated square-tiled
surface. Namely, the primitive components $\gamma_i$ of the
horizontal (respectively vertical) multicurve are in the
natural correspondence with the maximal horizontal
(respectively vertical) cylinders, while the associated
weights $h_i$ represent the numbers of horizontal
(respectively vertical) bands of squares in the
corresponding cylinders.

The \textit{number of components} of a multicurve
$\gamma=h_1\gamma_1+\dots+h_m\gamma_m$ is given by the sum
$h_1+\dots+h_m$ of the weights; the number $m$ represents
the \textit{number of primitive components}. The number of
primitive components of the horizontal multicurve
corresponds to the number of maximal horizontal cylinders
of the associated square-tiled surface, while the number of
components represents the total number of horizontal bands
of squares. The square-tiled surface in
Figure~\ref{fig:square:pillow} has $6$ horizontal bands of
squares organized into $4$ maximal flat cylinders filled
with parallel horizontal closed regular geodesics. These
$6$ horizontal bands of squares correspond to $6$
components of the associated horizontal multicurve
$2\gamma_1+\gamma_2+\gamma_3+2\gamma_4$. The same
square-tiled surface has $7$ vertical bands of squares
organized into $4$ maximal flat cylinders filled with
parallel vertical closed regular geodesics. The associated
vertical multicurve has $7$ components.
\end{Remark}

%----------------------------------------------------------------------
\subsection{Counting square-tiled surfaces}
\label{ss:General:strategy}

The moduli space of meromorphic quadratic
differentials on $\CP$ with exactly $p$ simple poles is naturally
stratified by the strata $\cQ(\nu, -1^{|\nu|+4})$ of quadratic
differentials with prescribed orders of zeroes ($\nu_i$ zeroes
of order $i$) and with $p=|\nu|+4$ simple poles (see e.g.
\cite{Zorich:flat:surfaces} for references). Here
\begin{equation}
\label{eq:def:abs:nu}
|\nu|=1\cdot \nu_1 + 2\cdot \nu_2 + 3\cdot \nu_3 + \dots\,.
\end{equation}

Under the above interpretation,
transverse connected pairs of multicurves having fixed number of bigonal faces correspond to
square-tiled surfaces with fixed number of simple poles.
The transverse connected pairs of multicurves having fixed number $\nu_1$ of
hexagonal faces, fixed number $\nu_2$ of octagonal faces, fixed
number $\nu_\degofz$ of $2(\degofz+2)$-gonal faces for $\degofz\in\N$ correspond to
square-tiled surfaces in the fixed stratum $\cQ(\nu,-1^{|\nu|+4})$.
In particular, the number of bigonal faces equals $|\nu|+4$.
The number of squares in the square-tiled surface is the total number
of crossings between the two multicurves.

Gluing two hemispheres with arc systems along the common
equator as in Section~\ref{ss:Proofs:number:of:meanders},
we get a transverse connected pair of multicurves. The
horizontal multicurve has a single connected component, it
is just a simple closed curve represented by the equator,
whereas the vertical multicurve may have several connected
components. Such transverse connected pairs of multicurves
correspond to square-tiled surfaces having a single
horizontal band of squares.

Labeled connected pairs of transverse simple closed curves
correspond to square-tiled surfaces having a single
horizontal band of squares and a single vertical band of
squares. Closed meanders in the plane correspond to
square-tiled surfaces as above with a marked vertical side
of one of the squares of the tiling.

Having translated our counting problems into the language of square-tiled
surfaces, we are ready to present our approach in detail.
\medskip

%----------------------------------------------------------------------
\noindent\textbf{Square-tiled surfaces of fixed combinatorial type and Masur--Veech volumes}
Any (generalized) partition $\nu=[0^{\nu_0} 1^{\nu_1} 2^{\nu_2}\dots]$
determines a complex orbifold $\cQ(\nu, -1^{|\nu|+4})$ called a
\textit{stratum of meromorphic quadratic differentials}. The points of $\cQ(\nu, -1^{|\nu|+4})$ are the equivalence classes
of meromorphic quadratic differentials with only simple poles on the sphere having $\nu_0$ marked points, $\nu_1$ zeroes of order $1$, $\nu_2$ zeroes of order $2$, etc. Each stratum $\cQ(\nu, -1^{|\nu|+4})$ has an integral
linear structure and hence a well-defined measure that is called the
Masur--Veech measure. We denote by $\Vol_1 \cQ_1(\nu,-1^{|\nu|+4})$ the
Masur--Veech volume of the ``unit hyperboloid'' in the stratum $\cQ(\nu,-1^{|\nu|+4})$.
We refer the reader to Section~\ref{ss:background:flat:surf} for precise
definitions. By~\cite{AEZ:genus:0}, the following formula holds:
\begin{equation}
\label{eq:volume}
\Vol_1 \cQ_1(\nu,-1^{|\nu|+4})=2\pi^2\cdot
(f(0)\big)^{\nu_0}(f(1)\big)^{\nu_1}(f(2)\big)^{\nu_2}\cdots\,,
\end{equation}
where $|\nu|=1\cdot\nu_1+2\cdot\nu_2+\dots$ and
$$
f(\degofz)=
\frac{\degofz\,!!}{(\degofz+1)!!}\cdot\pi^{\degofz}\cdot
\begin{cases}
\pi&\text{if $\degofz$ is odd}\\
2  &\text{if $\degofz$ is even}
\end{cases}
$$
(here we use the notation
$$
\degofz\,!!:=
\begin{cases}
1\cdot 3\cdot 5 \dots \cdot \degofz, &\text{when $\degofz$ is odd,}
\\
2\cdot 4\cdot 6 \dots \cdot \degofz, &\text{when $\degofz$ is even}\,,
\end{cases}
$$
and the common convention $0!!:=1$).
This formula was originally
conjectured by M.~Kontsevich and recently proved in~\cite{AEZ:genus:0}.
In this setting zeroes and poles of quadratic differentials
are \textit{labeled}, so the Masur--Veech volume of the stratum with unlabeled zeroes and poles is smaller by
the factor $(|\nu|+4)!\ \nu_0!\ \nu_1!\ \cdots$.
It follows from the definition of the Masur--Veech volume that
the number $\cS^{\mathit{labeled}}_\nu(N)$
of square-tiled surfaces (i.e. the integral points
in $\cQ(\nu, -1^{|\nu|+4})$, see
Section~\ref{ss:quad:as:invariant:orbifold} for
the definition) in the stratum $\cQ(\nu,-1^{|\nu|+4})$
tiled with at most $2N$ squares has asymptotics
\begin{equation}
\label{eq:VolQ:N:d}
\cS^{\mathit{labeled}}_\nu(N)=
\Vol_1 \cQ_1(\nu,-1^{|\nu|+4}) \cdot \frac{N^d}{2d} + o(N^d)
\ \text{ as }\ N\to+\infty\,,
\end{equation}
where
\begin{equation}
\label{eq:dim}
d=\dim_{\mathbb{C}}\cQ(\nu,-1^{|\nu|+4})=\ell(\nu)+|\nu|+2
\end{equation}
and
\begin{equation}
\label{eq:def:ell:nu}
\ell(\nu):=\nu_0+\nu_1+\dots\,.
\end{equation}

In Theorems~\ref{th:meander:counting}
and~\ref{th:p:fixed:number:of:leaves:general}
the
number $p$ of bigons
serves as ``combinatorial type'' of a square-tiled surface.
In this setting formulae~\eqref{eq:VolQ:N:d}
and~\eqref{eq:dim} imply that all but negligible part of
transverse connected pairs of multicurves having large
number $N$ of intersections would have only
bigons, squares, and hexagons as faces and would correspond to
square-tiled surfaces in the principal stratum $\cQ(1^{p-4},-1^p)$.

As an alternative choice of  ``combinatorial type''  of a square-tiled surface one can specify the number of hexagons, octagons, etc,
separately, thus fixing the stratum $\cQ(\nu,-1^{|\nu|+4})$. This
corresponds to the setting of
Theorems~\ref{th:any:trees:connected:proportion}
and~\ref{gth:meanders:fixed:stratum} below.
Under either choice we have a simple asymptotic formula for the number of
transverse connected pairs of multicurves of fixed combinatorial type
with at most $2N$ intersections.

\begin{Remark}[Labeled versus non-labeled zeroes and poles]
\label{rm:labeled:zeroes:and:poles}
When we introduced square-tiled surfaces in
Section~\ref{ss:pairs:of:multicurves:as:square:tiled:surfaces}
and identified them with transverse connected pairs of
multicurves in
Proposition~\ref{prop:pairs:multicurves:square:tiled:surfaces},
we did not label zeroes and poles of the corresponding
quadratic differential, which was quite natural in this
setting. Traditionally, one labels zeroes and poles of a
square-tiled surface in the context of Masur--Veech
volumes, so we usually do label zeroes and poles. We recall
the setting every time when there may be any ambiguity.
The unlabeled and labeled count of square-tiled surfaces in
the stratum $\cQ(\nu, -1^{|\nu|+4})$ simply differ by the
integral factor $(|\nu|+4)!\, \nu_0!\, \nu_1!\, \cdots$.
\end{Remark}
\medskip

%----------------------------------------------------------------------
\noindent\textbf{Square-tiled surfaces with fixed combinatorics.}
By
Proposition~\ref{prop:pairs:multicurves:square:tiled:surfaces}
a square-tiled surface has exactly $k$ horizontal
(respectively vertical) bands of squares if and only if the
associated horizontal (respectively vertical) multicurve
has exactly $k$ connected components, see
Remark~\ref{rm:multicurves} at the end of
Section~\ref{ss:pairs:of:multicurves:as:square:tiled:surfaces}.
Count of square-tiled surfaces with exactly $k$ horizontal
bands of squares admits efficient combinatorial approach.
In the case of a single horizontal band of squares
corresponding to $k=1$, this count was performed
in~\cite{DGZZ-Yoccoz}. In
Section~\ref{s:Computations:for:square:tiled:surfaces} we
reproduce the relevant computations that become
particularly explicit in the case of the sphere.

\begin{Theorem}
\label{th:c1:in:genus:0}
Let $k \geq 1$ and let $\nu=[0^{\nu_0}
1^{\nu_1} 2^{\nu_2}\dots]$ be a generalized partition. The
number $\cS^{\mathit{labeled}}_{k,\nu}(N)$
of square-tiled surfaces in the stratum
$\cQ(\nu,-1^{|\nu|+4})$ with labeled zeros and poles tiled
with at most $2N$ squares organized into $k$ horizontal
bands (i.e. having the associated horizontal multicurve composed of $k$ connected components)
has asymptotics
\begin{equation}
\label{eq:c1:N:d}
\cS^{\mathit{labeled}}_{k,\nu}(N)=
\cyl_k\left(\cQ(\nu,-1^{|\nu|+4})\right) \cdot \frac{N^d}{2d} + O(N^{d-1})
\ \text{ as }\ N\to+\infty\,,
\end{equation}
where the coefficient $\cyl_k(\cQ(\nu,-1^{|\nu|+4}))$ is a positive rational number.
Moreover, in the case $k = 1$, one has the following explicit expression
\begin{equation}
\label{eq:c1:answer}
\cyl_1\left(\cQ(\nu,-1^{|\nu|+4})\right)=
2\cdot
\sum_{\iota_0=0}^{\nu_0}
\sum_{\iota_1=0}^{\nu_1}
\sum_{\iota_2=0}^{\nu_2}
\sum_{\dots}^{\dots}
\binom{\nu_0}{\iota_0}
\binom{\nu_1}{\iota_1}
\binom{\nu_2}{\iota_2}
\cdots
\binom{|\nu|+4}{|\iota|+2}\,.
\end{equation}
Here $\iota=[0^{\iota_0} 1^{\iota_1} 2^{\iota_2} \dots]$,
$ d=\dim_{\mathbb{C}}\cQ(\nu,-1^{|\nu|+4})=
\ell(\nu)+|\nu|+2$ and $|\nu|$ and $\ell(\nu)$ are
defined in equations~\eqref{eq:def:abs:nu}
and~\eqref{eq:def:ell:nu} respectively.
\end{Theorem}
Theorem~\ref{th:c1:in:genus:0} is proved
is proved at the end of
section~\ref{s:sep:diag:densities}
with exception for the explicit value~\eqref{eq:c1:answer}
for $\cyl_1\left(\cQ(\nu,-1^{|\nu|+4})\right)$
which is proved in
Section~\ref{s:Computations:for:square:tiled:surfaces}. For
the case of the principal stratum corresponding to $\nu =
[1^k]$ the formula becomes even more explicit,
see~\eqref{eq:c1:principal:answer}.

By symmetry argument, we get the same
asymptotics~\eqref{eq:c1:answer} with the same constant
$\cyl_k\left(\cQ(\nu,-1^{|\nu|+4})\right)$ for the number
of square-tiled surfaces with $k$ vertical (instead of
horizontal) bands of squares.

We have seen that meanders are in bijective correspondence
with square-tiled surfaces whose associated horizontal and
vertical multicurves have a single component. The following
result allows to transfer to meanders the explicit
asymptotics from Theorem~\ref{th:c1:in:genus:0}.

\begin{Theorem}
\label{th:c11:in:genus:0}
The number
$\cS^{\mathit{labeled}}_{k_{h},k_{v},\nu}(N)$ of
square-tiled surfaces in the stratum
$\cQ(\nu,-1^{|\nu|+4})$ with labeled zeroes and poles tiled
with at most $2N$ squares composed of $k_{h}$
horizontal and $k_{v}$ vertical bands of squares (i.e.
having the associated horizontal multicurve consisting of
$k_{h}$ components and the associated vertical
multicurve consisting of $k_{v}$ components) has the
following asymptotics as $N\to+\infty$:
\begin{equation}
\label{eq:c11:Q:nu}
\cS^{\mathit{labeled}}_{k_{h},k_{v},\nu}(N)=
\cyl_{k_{h},k_{v}}\left(\cQ(\nu,-1^{|\nu|+4})\right)\cdot\frac{N^d}{2d} +
o\left(N^{d}\right)\text{ as } N\to+\infty\,,
\end{equation}
where the constant $\cyl_{k_{h},k_{v}}\left(\cQ(\nu,-1^{|\nu|+4})\right)$
satisfies the following relation:
\begin{equation}
\label{eq:c11:as:c1:squared:over:Vol}
\frac{\cyl_{k_{h},k_{v}}\left(\cQ(\nu,-1^{|\nu|+4})\right)}
{\Vol_1 \cQ_1(\nu,-1^{|\nu|+4})}
=
\frac{\cyl_{k_{h}}\left(\cQ(\nu,-1^{|\nu|+4})\right)}
{\Vol_1 \cQ_1(\nu,-1^{|\nu|+4})}
\cdot
\frac{\cyl_{k_{v}}\left(\cQ(\nu,-1^{|\nu|+4})\right)}
{\Vol_1 \cQ_1(\nu,-1^{|\nu|+4})}\,,
\end{equation}
and the constants $\cyl_{k_{h}}\left(\cQ(\nu,-1^{|\nu|+4})\right), \cyl_{k_{v}}\left(\cQ(\nu,-1^{|\nu|+4})\right)$ are the ones
from Theorem~\ref{th:c1:in:genus:0}.
\end{Theorem}

Theorem~\ref{th:c11:in:genus:0} is proved at the end of
section~\ref{s:sep:diag:densities}.
The relation~\eqref{eq:c11:as:c1:squared:over:Vol} can be viewed as a
statement about independence of horizontal and vertical decompositions
of square-tiled surfaces.

Forgetting the labeling of zeroes and poles (see Remark~\ref{rm:labeled:zeroes:and:poles})
we get the asymptotics of the number of connected pairs of transverse simple closed curves
of fixed combinatorial type with at  most $2N$ crossings.

\medskip
\noindent\textbf{Further remarks.}
It is worth mentioning that all the above quantities have
combinatorial nature, but were computed by alternative
methods. The Masur--Veech volumes in genus zero $\Vol_1
\cQ_1(\nu, -1^{|\nu|+4})$ are closely related to Hurwitz
numbers counting covers of the sphere of some very special
ramification type. However, all attempts to compute these
volumes by purely combinatorial methods have (up to now)
failed even for covers of the simplest ramification type,
see e.~g.~\cite{AEZ:Dedicata}. The proof
in~\cite{AEZ:genus:0} of the formula for the Masur--Veech
volumes implicitly uses the analytic Riemann--Roch theorem
in addition to combinatorics.

Theorem~\ref{th:c11:in:genus:0} about square-tiled surfaces with a fixed
horizontal and vertical combinatorics is proved in Section~\ref{s:Equidistribution} using
ergodicity of the $\SLR$-action with respect to the Masur--Veech measure
and Moore's ergodicity theorem. The proof was inspired by the approach of
M.~Mirzakhani to counting simple closed geodesics on hyperbolic surfaces.

Note that the error term in Theorem~\ref{th:c1:in:genus:0}
has the form $O(N^{d-1})$ while the error term in
Theorem~\ref{th:c11:in:genus:0} has a weaker form $o(N^d)$.
The underlying reason is that the count in
Theorem~\ref{th:c1:in:genus:0} can be expressed in terms of
the Ehrahrt quasi-polynomial associated to the Euclidean
volume of certain rational polytope, see the proof of
Theorem~\ref{thm:density:cylinder:diag} in
Section~\ref{s:sep:diag:densities}, while the ergodic
technique used in the proof of
Theorem~\ref{th:c11:in:genus:0} is insufficient to provide
an explicit error term and leaves us with $o(N^d)$. It
would be very interesting to provide a more precise error
term in~\eqref{eq:c11:Q:nu}. Let us mention that in a
situation that shares many similarities with ours, namely,
for intersection of stable and unstable horospheres in the
space $\operatorname{SL}_d(\R) / \operatorname{SL}_d(\Z)$,
an explicit error term has been recently carried out
in~\cite{ElBazHuangLee}.

%####################################################################
%####################################################################
%####################################################################

%-------------------------------------------------------------------
\section{From arc systems and meanders to square-tiled surfaces}
\label{s:From:arc:systems:and:meanders:to:square:tiled:surfaces}

In this Section we give precise bijections between meanders and
square-tiled surfaces with
a single maximal cylinder in both horizontal and
vertical directions. We consider meanders in the plane
in sections~\ref{ss:orientation}--\ref{ss:Meanders:and:square:tiled:surfaces:in:a:given:stratum}
and meanders on the sphere in Section~\ref{ss:Proofs:fractions}.
All the proofs of asymptotic results are based on
the results of Section~\ref{s:Equidistribution}.

\subsection{Orientation, marking and weight}
\label{ss:orientation}
We have seen in
Proposition~\ref{prop:pairs:multicurves:square:tiled:surfaces}
from
Section~\ref{ss:pairs:of:multicurves:as:square:tiled:surfaces}
that transverse connected pairs of multicurves on the
sphere are in bijection with square-tiled surfaces of genus
0. A square-tiled surface arising from a pair of arc
systems has a single horizontal band of squares. In
particular, a square-tiled surface arising from a meander
has a single horizontal and a single vertical band of
squares.

However, pairs of arc systems and meanders (both
in the plane and on the sphere) carry an extra marking.
Namely, a pair of arc systems comes with a given choice of a top and
bottom sides. Furthermore, the square-tiled surface
corresponding to a \textit{plane} meander has a special square corresponding
to the leftmost intersection. Summarizing, we get the following
result:

\begin{Lemma}
\label{lem:bijection:meander:square:tiled}
There is a natural bijection between meanders in the plane
and square-tiled surfaces with a marked oriented vertical
side of one of the squares that have a single horizontal
and a single vertical band of squares.
\end{Lemma}

In order to provide exact count of meanders we present the
conventions for count of square-tiled surfaces and see how
these quantities are related to arc systems and meander
count. We will consider square-tiled surfaces
with a marked \textit{vertex} of the square tiling.

\begin{Convention}
\label{conv:marked:on:top}
By convention, the marked vertex is located at the
end of the marked oriented vertical edge on the top boundary
component of the single horizontal cylinder.
\end{Convention}

Note that the two boundary components of the single
horizontal cylinder do not intersect. Thus, the marked vertex
uniquely defines the top boundary component and, hence,
provides us with the canonical orientation of the
waist curve of the single horizontal cylinder.

Let us reconstruct the labeled pair of arc systems in the plane from
a square-tiled surface of genus zero tiled with a single horizontal band
of squares and having a marked vertex. If the marked vertex of the square tiling is a simple pole of the
quadratic differential, there is a single vertical side of the
square tiling incident to it, and the choice of the vertical side is
canonical. If the marked vertex of the square tiling is a regular point of the
quadratic differential, there are two adjacent vertical sides, so
there are two ways to chose a distinguished vertical side which,
generally, lead to two different arc systems. We say ``generally''
because it might happen that the square-tiled surface is particularly
symmetric (like square-tiled surfaces associated to arc systems from
Figure~\ref{fig:pairs:of:arc:systems}) and the resulting two arc
systems are isomorphic.

As soon as we are interested only in the asymptotic
count we can simply neglect this issue: the square-tiled
surfaces with extra symmetries occur too rarely to affect
the asymptotics. To perform exact count we establish the
following standard Convention.

\begin{Convention}
\label{conv:symmetry}
We always count a marked or non-marked square-tiled surface with a weight
reciprocal to the order $|\Aut|$ of its automorphism group.
In the current context we keep track of which sides
of the square-tiled surface are horizontal and which ones are vertical,
but we do not label either the sides or the vertices of the square-tiled
surface. By definition, the automorphism group $\Aut$ acts by flat
isometries sending horizontal (respectively vertical) sides of the
tiling to horizontal (respectively vertical) sides and keeping the
marked point (if any) fixed.
\end{Convention}

In particular, if we have a marked point at a regular vertex of a
square-tiled surface, the automorphism group is either trivial or $\Z/2\Z$.
If we have a marked point at a zero of degree $\degofz$ of a square-tiled
surface, the automorphism group is a (usually trivial) subgroup of the
cyclic group $\Z/(\degofz+2)\Z$.

\subsection{Meanders with a given number of minimal arcs and square-tiled surfaces}
In this Section and in the next one we continue to work
with \textit{plane} meanders. Under
Conventions~\ref{conv:marked:on:top}
and~\ref{conv:symmetry}, any collection of weighted
square-tiled surfaces on the sphere with a single band of
horizontal squares and with a marked regular point defines
twice as many arc systems; the weighted collection of
square-tiled surfaces as above with a marked zero of degree
$\degofz$ defines $(\degofz+2)$ times more arc systems for
any $\degofz\in\N$.

\begin{Lemma}
Let the initial meander in the plane have $p$ minimal arcs,
where $p\ge 3$. The associated square-tiled surface has $p+1$ simple
poles if the initial meander has a maximal arc and $p$ simple poles
if it does not.
\end{Lemma}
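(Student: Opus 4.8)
The plan is to track how the number of minimal arcs (pimples) of the plane meander translates, under the bijection of Lemma~\ref{lem:bijection:meander:pillowcase:cover}, into the number of simple poles of the associated quadratic differential $q$ on the pillowcase cover. Recall from Section~\ref{ss:General:strategy} that simple poles of $q$ correspond exactly to the bigonal faces of the graph $\cG$ dual to the union of the two multicurves, equivalently to the vertices of valence one in the reduced dual trees. So the task reduces to counting how many leaves appear in the two reduced dual trees attached to the meander.

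First I would recall that a closed plane meander is the union of the horizontal line and a simple closed curve; compactifying the plane to the sphere turns it into a transverse pair of simple closed curves, and cutting along the equator (the former horizontal line) produces the two arc systems, one in each hemisphere, as in Figure~\ref{fig:meander}. The minimal arcs (pimples) of the meander are precisely the innermost arcs on the side of the equator carrying the black regions; these are in bijection with the leaves of the reduced dual tree of the corresponding arc system. The key bookkeeping point is that the $p$ minimal arcs of the meander are distributed as leaves between the two reduced dual trees $\cT_{top}$ and $\cT_{bottom}$, so I would argue that the total leaf count, and hence the number of bigons, is governed by $p$ together with the behaviour at the two extreme crossings.

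The decisive step is to analyze the contribution of the leftmost and rightmost crossings, which is exactly where the maximal arc (rainbow) does or does not intervene. When the meander \emph{has} a maximal arc joining the leftmost and rightmost crossings, this outer arc on the sphere closes up the arc system in a way that produces one additional innermost face, contributing one extra leaf and therefore one extra bigon: this gives $p+1$ simple poles. When the meander has \emph{no} maximal arc, the outermost region behaves differently and no such extra bigon is created, leaving exactly $p$ simple poles. I would make this precise by examining the two boundary regions of the arc system near the endpoints of the equator and checking, face by face, which configurations yield bigons in $\cG$.

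The main obstacle will be the careful combinatorial verification that the presence or absence of the rainbow alters the bigon count by exactly one, rather than by some other amount depending on the local structure at the extremal crossings. In particular I expect to have to be attentive to degenerate small cases near the stated thresholds $p\ge 3$ (with rainbow) and $p\ge 4$ (without), where the trees are small and the extremal faces can coincide with minimal arcs already counted. I would handle this by working directly with the dual graph $\cG^\ast$ whose faces are the polygons described in Section~\ref{ss:pairs:of:multicurves:as:pillowcase:covers}, identifying the bigonal faces explicitly and confirming that the one-point compactification contributes the single extra bigon precisely when a maximal arc is present.
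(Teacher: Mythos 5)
Your proposal is correct and follows essentially the same route as the paper: identify simple poles with the bigonal faces of the partition of the sphere by the two curves, observe that each minimal arc gives a bigon, and note that upon one-point compactification the maximal arc (when present) becomes indistinguishable from a minimal arc and contributes exactly one additional bigon. The paper states this in three sentences without the face-by-face verification you plan, but the underlying argument is identical.
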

\begin{proof}
A maximal arc becomes indistinguishable from a minimal arc after
passing to a labeled pair of transverse simple closed curves on the
sphere. Minimal and maximal arcs are in bijective correspondence with
bigons in the partition of the sphere by the union of these transverse
simple closed curves. Bigons, in turn, are in bijective
correspondence with simple poles of the associated square-tiled surface.
\end{proof}

Recall that $\cM_p^+(N)$ and $\cM_p^-(N)$ denote the number
of meanders with $p$ minimal arcs and with or without a
maximal arc respectively. Denote by $\cP_p(N)$ the number
of square-tiled surfaces of genus zero tiled with at most
$2N$ identical squares, having exactly $p$ simple poles and
having a single horizontal and a single vertical band of
squares. Denote by $\cP_{p,\degofz}(N)$, where
$\degofz=0,1,2,\dots$, the number of square-tiled surfaces
as above having in addition a marked point at a regular
vertex when $\degofz=0$ and at a zero of order $\degofz$
when $\degofz>0$.

Note that a square-tiled surface of genus $0$ with $p$
simple poles cannot have zeroes of order greater than $p-4$.

\begin{Lemma}
\label{lm:M:through:P}
Under Convention~\ref{conv:symmetry} on the weighted count of
square-tiled surfaces the following equalities hold:
\begin{align}
\label{eq:Mplus:P}
\cM^+_p(N)&= 2(p+1)\cdot \cP_{p+1}(N)
\\
\label{eq:Mminus:P}
\cM^-_p(N) &= \sum_{\degofz=0}^{p-4} (\degofz+2)\cdot \cP_{p,\degofz}(N)
\,-\,\frac{1}{2}\,\cM_{p-1}^+(N)
\,.
\end{align}
\end{Lemma}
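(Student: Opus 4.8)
The plan is to count via Lemma~\ref{lem:bijection:meander:pillowcase:cover}, which turns a plane meander into a pillowcase cover (single horizontal and single vertical cylinder of height/width $\tfrac12$) carrying a marked oriented vertical side, and then to enumerate these markings on the covers counted by $\cP_{p+1}$ and $\cP_{p,\degofz}$, always using the $1/|\Aut|$ weighting of Convention~\ref{conv:symmetry}.

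For~\eqref{eq:Mplus:P}: a meander with a maximal arc and $p$ minimal arcs corresponds to a cover with $p+1$ poles (the $p$ bigons of the minimal arcs together with the bigon of the rainbow). To reconstruct it from an unmarked cover in $\cP_{p+1}$ I choose which of the $p+1$ bigons becomes the rainbow and I choose an orientation of the waist of the horizontal cylinder, i.e.\ which end of the rainbow is the leftmost crossing. These $2(p+1)$ choices are independent, each produces a distinct meander, and together they rigidify the cover; so the orbit--stabiliser principle (marking one of $k$ symmetric features multiplies a $1/|\Aut|$--weighted count by $k$) yields $\cM^+_p(N)=2(p+1)\,\cP_{p+1}(N)$.

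For~\eqref{eq:Mminus:P}: the associated cover has $p$ poles, and in the reconstruction of the labelled pair of arc systems the marked vertex is a zero of order $\degofz$ (a regular point for $\degofz=0$), with $0\le\degofz\le p-4$ by the remark preceding the lemma. Marking such a vertex and selecting one of its $(\degofz+2)$ adjacent vertical sides reconstructs a pair of arc systems, so summing gives $\sum_{\degofz=0}^{p-4}(\degofz+2)\,\cP_{p,\degofz}(N)$. The subtlety is that the marked vertex only fixes \emph{where the reconstruction begins}, not the global shape: when the two hemispheres are reglued, the outermost arcs of the northern and southern systems may fuse into a single arc joining the leftmost and rightmost crossings, so the reconstructed object acquires a rainbow. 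Hence $\sum_{\degofz}(\degofz+2)\cP_{p,\degofz}(N)$ over-counts $\cM^-_p(N)$ by exactly these accidental maximal-arc meanders, which have $p-1$ minimal arcs (again $p$ poles). Each is produced here with its orientation already determined by the marking, whereas in~\eqref{eq:Mplus:P} it was counted with both orientations; so the over-count equals $\tfrac12\,\cM^+_{p-1}(N)$, and subtracting it gives~\eqref{eq:Mminus:P}.

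The hard part will be this correction term. I must show that a non-pole marking yields a maximal arc \emph{only} through the regluing just described, that these accidental rainbows exhaust $\cM^+_{p-1}$, and --- most delicately --- that the orientation and $\Aut$--bookkeeping produce the exact factor $\tfrac12$ rather than merely the correct leading asymptotics. I would pin the constant down using $\cM^+_{p-1}(N)=2p\,\cP_p(N)$ from~\eqref{eq:Mplus:P}: the claim is then equivalent to showing that the accidental contribution has weighted mass exactly $p\,\cP_p(N)$, so that the identity holds on the nose and not only up to lower-order terms.
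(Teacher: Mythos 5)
Your argument for~\eqref{eq:Mplus:P} is the paper's own: take an unmarked cover counted by $\cP_{p+1}(N)$, choose one of the $p+1$ poles to become the rainbow and one of the two orientations of the waist curve, and let Convention~\ref{conv:symmetry} absorb the coincidences among the $2(p+1)$ resulting meanders. That part is correct and needs no change.

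For~\eqref{eq:Mminus:P} the bookkeeping is also the paper's, but the mechanism you give for the correction term is wrong, and since you explicitly defer the ``hard part'' (the exact factor $\tfrac12$) to that mechanism, this is a genuine gap. Arcs of the northern and southern hemispheres never ``fuse'' under regluing: every arc of the reconstructed plane meander lies entirely above or entirely below the line. The actual point is the following. The marked vertical edge (the one through infinity, adjacent to the leftmost crossing) has two endpoints in the square tiling, namely the unbounded face above the line and the unbounded face below it; Convention~\ref{conv:marked:on:top} puts the marked vertex at the \emph{top} endpoint, so demanding that it be a non-pole only forbids a maximal arc \emph{above} the line. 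The \emph{bottom} endpoint may still be a simple pole, i.e.\ the unbounded face below the line may be a bigon, and exactly then the meander has a maximal arc below the line. This is the entire overcount, and once it is stated this way your target identity is immediate and exact: the accidental configurations are parametrized by pairs (cover counted by $\cP_p(N)$, pole sitting at the bottom endpoint) --- each pole is adjacent to a unique vertical edge, and the top endpoint of that edge is automatically a non-pole for $p\ge 4$ since two bigonal unbounded faces would force the trivial circle --- so their weighted mass is exactly $p\cdot\cP_p(N)=\tfrac12\,\cM^+_{p-1}(N)$ by~\eqref{eq:Mplus:P}. (Equivalently: reflection in the horizontal line is a fixed-point-free involution on meanders with a maximal arc, exchanging rainbows above and below the line, and the accidental ones are precisely those with the rainbow below.) Finally, your phrase that in~\eqref{eq:Mplus:P} each such meander ``was counted with both orientations'' is not right: $\cM^+_{p-1}(N)$ counts each meander once; the factor $2$ lives in the number of meanders produced per cover, not in a multiplicity attached to each meander.
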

\begin{proof}
If the meander has $2n$ intersections, then the associated square-tiled
surface is tiled with $2n$ identical squares.

To every meander with a maximal arc and with $p$ minimal
arcs we associated a canonical square-tiled surface of
genus zero with $p+1$ simple poles, a single horizontal and
a single vertical band of squares, see
Proposition~\ref{prop:pairs:multicurves:square:tiled:surfaces}.
Conversely, to every such square-tiled surface we can
associate $2(p+1)$ meanders with one maximal arc and $p$
minimal arcs. Indeed, choose any of the $(p+1)$ simple
poles and choose independently one of the two possible
orientations of the waist curve of the horizontal cylinder.
Cutting this waist curve at the intersection with the
single vertical edge of the square tiling adjacent to the
selected pole we get a meander in the plane with a maximal
arc.

It might happen that some of the resulting $2(p+1)$ meanders are
pairwise isomorphic. However, this implies that the automorphism group of
the square-tiled surface is nontrivial, and
Convention~\ref{conv:symmetry} provides the exact count. This
completes the proof of equality~\eqref{eq:Mplus:P}.

Similarly, to every meander without a maximal arc and with
$p$ minimal arcs we assigned a canonical square-tiled
surface of genus zero having $p$ simple poles, a single
horizontal and a single vertical band of squares, and a
marked vertex following
Convention~\ref{conv:marked:on:top}. The assumption that
the initial meander does not have a maximal arc excludes
coincidence of the marked point with a simple pole on the
upper side. In order to exclude a maximal arc on the lower
side, one needs to subtract a half of $\cM_{p-1}^+(N)$. At
the end of Section~\ref{ss:orientation} we have seen that
under Convention~\ref{conv:symmetry} on weights with which
we count square-tiled surfaces with a marked vertex, any
collection of weighted genus zero square-tiled surfaces
with a single horizontal and a single vertical band of
squares, and with a marked vertex of the tiling that is
regular in the flat metric, defines twice as much meanders
in the plane. A weighted collection of square-tiled
surfaces as above with a marked zero of degree $\degofz$
defines $(\degofz+2)$ times more meanders in the plane for
any $\degofz\in\N$. As before, if some of the resulting
meanders are isomorphic we do not count them several times
since by definition of the automorphism group $\Aut$ of the
corresponding square-tiled surface, the resulting
multiplicity coincides with the order $|\Aut|$ of the
automorphism group. This completes the proof of
equality~\eqref{eq:Mminus:P}.
\end{proof}
%----------------------------------------------------------------------

\subsection{Meanders and square-tiled surfaces in a given stratum}
\label{ss:Meanders:and:square:tiled:surfaces:in:a:given:stratum}
We now introduce finer count with respect to a fixed
stratum of meromorphic quadratic differentials. For a partition $\nu=[1^{\nu_1} 2^{\nu_2} \dots]$
denote by $\cM^+_\nu(N)$ and $\cM^-_\nu(N)$ the number of
meanders (with and without a maximal arc respectively)
giving rise to square-tiled surfaces in the stratum
$\cQ^{\textit{non-labeled}}(\nu, -1^{|\nu|+4})$. In the current setting
we do not label zeroes and poles of quadratic
differentials. We say that such
meanders are \textit{of type} $\nu$. Similarly, let
$\cP_\nu(N)$ be the number of genus zero square-tiled
surfaces in the stratum $\cQ^{\textit{non-labeled}}(\nu,-1^{|\nu|+4})$ tiled with
at most $2N$ identical squares, with a single horizontal
and a single vertical band of squares. Denote by
$\cP_{\nu,\degofz}(N)$, $\degofz=0,1,2,\dots$, the number
of square-tiled surfaces as above having in addition a
marked point at a regular vertex when $\degofz=0$ and at a
zero of order $\degofz$ when $\degofz>0$. By definition, we
let $\cP_{\nu,\degofz}(N)=0$ for any $N$ when
$\nu_\degofz=0$. Recall that by
Convention~\ref{conv:symmetry} we count square-tiled
surfaces with weights reciprocal to the orders of their
automorphism groups.

\begin{Lemma}
\label{lm:M:through:P:nu}
Under Convention~\ref{conv:symmetry} on weights with which we count
square-tiled surfaces the following equalities hold
\begin{align}
\label{eq:Mplus:P:nu}
\cM^+_\nu(N)&= 2(|\nu|+4)\cdot \cP_\nu(N)\,
\\
\label{eq:Mminus:P:nu}
\cM^-_\nu(N) &= \sum_{\degofz=0}^{|\nu|} (\degofz+2)\cdot \cP_{\nu,\degofz}(N)
\,-\,\frac{1}{2}\,\cM_{\nu}^+(N)
\,.
\end{align}
\end{Lemma}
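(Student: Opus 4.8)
The plan is to run the same weighted bijective count as in the proof of Lemma~\ref{lm:M:through:P}, observing that every construction used there preserves the stratum of the pillowcase cover, so that the two identities hold verbatim inside each fixed stratum $\cQ(\nu,-1^{|\nu|+4})$. The key point is that the stratum is an intrinsic invariant of the cover, read off from the face degrees of the dual graph $\cG^\ast$: a $2(\degofz+2)$-gon records a zero of order $\degofz$ and a bigon records a simple pole, and this profile is untouched by the operations performed on the waist curve of the single horizontal cylinder. Since the correspondence of Lemma~\ref{lem:bijection:meander:pillowcase:cover} sends a meander of type $\nu$ to a cover in $\cQ(\nu,-1^{|\nu|+4})$ and nowhere else, each term in~\eqref{eq:Mplus:P} and~\eqref{eq:Mminus:P} splits as a sum over strata, and it suffices to match the summands.

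First I would prove~\eqref{eq:Mplus:P:nu}. A cover in $\cQ(\nu,-1^{|\nu|+4})$ has exactly $|\nu|+4$ simple poles, so $|\nu|+4$ plays the role of $p+1$ in~\eqref{eq:Mplus:P}. Exactly as there, to each cover in this stratum carrying a single horizontal and a single vertical cylinder of height (width) $\tfrac12$ we associate $2(|\nu|+4)$ plane meanders with a maximal arc: choose one of the $|\nu|+4$ poles, choose one of the two orientations of the waist curve of the horizontal cylinder, and cut at the vertical edge adjacent to the chosen pole. Every resulting meander is again of type $\nu$, every type-$\nu$ meander with a maximal arc arises this way, and coincidences among the outcomes occur only for covers with nontrivial automorphisms; Convention~\ref{conv:symmetry} then turns the factor $2(|\nu|+4)$ into the exact weighted equality~\eqref{eq:Mplus:P:nu}.

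For~\eqref{eq:Mminus:P:nu} I would again restrict the proof of~\eqref{eq:Mminus:P} to the stratum $\cQ(\nu,-1^{|\nu|+4})$, which now carries $|\nu|+4$ simple poles. A type-$\nu$ meander without a maximal arc corresponds to a cover in this stratum equipped with a marked vertex on the top boundary of the horizontal cylinder (Convention~\ref{conv:marked:on:top}), that vertex being regular or a zero. Using that a marked regular vertex yields $2$ meanders and a marked zero of order $\degofz$ yields $\degofz+2$ meanders (as recalled at the end of Section~\ref{ss:orientation}), the weighted number of type-$\nu$ meanders with no maximal arc on top equals $\sum_{\degofz=0}^{|\nu|}(\degofz+2)\cdot\cP_{\nu,\degofz}(N)$; the range stops at $|\nu|=p-4$ because a genus-zero cover with $|\nu|+4$ poles has no zero of higher order, and $\cP_{\nu,\degofz}(N)=0$ whenever $\nu_\degofz=0$. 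From this one must still subtract the type-$\nu$ meanders that carry a maximal arc on the bottom. In the proof of~\eqref{eq:Mminus:P} these are encoded by covers with a marked simple pole and contribute $\tfrac12\cM^+_{p-1}(N)$; since every such cover has pole count $|\nu|+4$ and zero profile $\nu$, restricting to the stratum replaces $\cM^+_{p-1}$ by $\cM^+_\nu$, yielding the correction $\tfrac12\cM^+_\nu(N)$ and hence~\eqref{eq:Mminus:P:nu}.

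The main obstacle, as in Lemma~\ref{lm:M:through:P}, is the bookkeeping of this correction term together with the automorphism weights. One has to verify that passing from a meander to its cover, choosing a pole and an orientation, or marking a vertex never alters the face-degree profile, so that every piece of the correspondence stays inside a single stratum; this is precisely what licenses the termwise refinement and, in particular, forces the index $p-1$ of~\eqref{eq:Mminus:P} to collapse to the single label $\nu$ in~\eqref{eq:Mminus:P:nu}. It then remains to confirm that Convention~\ref{conv:symmetry} is applied consistently, so that the rare covers with nontrivial automorphisms (such as the symmetric examples underlying Figure~\ref{fig:pairs:of:arc:systems}) are weighted by exactly $1/|\Aut|$ on both sides; granting this, the refined identities follow termwise from their unrefined counterparts.
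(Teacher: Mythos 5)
Your proposal is correct and follows exactly the route the paper intends: the paper's own proof consists of the single remark that the argument is ``completely analogous to the proof of Lemma~\ref{lm:M:through:P}'', and what you have written is precisely the stratum-by-stratum refinement of that argument, including the correct observation that the correction term $\tfrac12\cM^+_{p-1}(N)$ collapses to $\tfrac12\cM^+_\nu(N)$ because the relevant covers have pole count $|\nu|+4$ and zero profile $\nu$. Nothing is missing.
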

\begin{proof}
The proof is completely analogous to the proof of
Lemma~\ref{lm:M:through:P}.
\end{proof}

%----------------------------------------------------------------------
\subsection{Asymptotic density of meanders: general setting}
\label{ss:Proofs:fractions}

In this Section we return to meanders on the sphere. Let
$\cT$ be a plane tree. We associate to $\cT$ a generalized
integer partition $\nu = \nu(\cT) = [0^{\nu_0} 1^{\nu_1}
2^{\nu_2} \ldots]$, where $\nu_\degofz$ denotes the number
of vertices of valence $\degofz+2$ for
$\degofz=0,1,2\dots$. The number of leaves, or,
equivalently, of vertices of valence $1$, is then expressed
in terms of the (generalized) partition $\nu$ as $|\nu|+4$,
where $|\nu|=1\cdot \nu_1 + 2\cdot \nu_2 + 3\cdot \nu_3 +
\dots$.

Given two generalized partitions $\iota = [0^{\iota_0}
1^{\iota_1} 2^{\iota_2} \dots]$ and $\kappa= [0^{\kappa_0}
1^{\kappa_1} 2^{\kappa_2} \dots]$ we define their sum as
$\nu=\iota+\kappa= [0^{\iota_0+\kappa_0}
1^{\iota_1+\kappa_1} 2^{\iota_2+\kappa_2} \dots]$. We say
that $\iota$ is a \textit{subpartition} of $\nu$ if for all
$i \geq 0$ we have $\iota_i \leq \nu_i$. We use notation
$\iota \subset \nu$ to indicate that $\iota$ is a
subpartition of $\nu$ and define the difference
$\kappa=\nu-\iota$ in the natural way.

We formulate and prove the following generalization of
Theorem~\ref{th:trivalent:trees:connected:proportion}
giving a formula for the limit of the
fraction~\eqref{eq:p:connected:iota:kappa} of meanders
among all gluings
which we get identifying arc systems of types $\cT_{top}$
and $\cT_{bottom}$ with the same number of arcs, see
Figure~\ref{fig:pairs:of:arc:systems}.

Though we agreed in Section~\ref{ss:Meanders:and:arc:systems}
to consider reduced trees, suppressing
the vertices of valence $2$, it is often convenient to keep several
marked points, so we state the Theorem below in this slightly more
general setting. Note that since $f(0)=2$, the number $\nu_0$ of zeroes in the
partition $\nu$ affects the value of the
function $\Vol_1 \cQ_1(\nu,-1^{|\nu|+4})$. Adding an extra marked point
we double the Masur--Veech volume of the corresponding stratum.

\begin{Theorem}
\label{th:any:trees:connected:proportion}
For any pair of plane trees $\cT_{top}, \cT_{bottom}$ with
associated generalized partitions $\nu_{top}$ and $\nu_{bottom}$
we have:
$$
\lim_{N\to+\infty} \prob_{\mathit{connected}}(\cT_{top}, \cT_{bottom}; N)
=
\prob_1(\cQ(\nu,-1^{|\nu|+4}))>0\,,
$$
where $\nu = \nu_{top} + \nu_{bottom}$ and
\begin{equation}
\label{eq:probability}
\prob_1\left(\cQ(\nu,-1^{|\nu|+4})\right)
=
\frac{\cyl_1(\cQ(\nu,-1^{|\nu|+4}))}
{\Vol_1 \cQ_1(\nu,-1^{|\nu|+4})}\,.
\end{equation}
Here
\begin{equation}
\label{eq:c1:nu}
\cyl_1(\cQ(\nu,-1^{|\nu|+4}))
= 2\sum_{\mu\subset\nu}
\binom{|\nu|+4}{|\mu|+2}
\binom{\nu_0}{\mu_0}
\binom{\nu_1}{\mu_1}
\binom{\nu_2}{\mu_2}
\cdots
\,.
\end{equation}
and
$\Vol_1 \cQ_1(\nu,-1^{|\nu|+4})$ is given by~\eqref{eq:volume}.
\end{Theorem}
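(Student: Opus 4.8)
The plan is to rewrite the ratio~\eqref{eq:p:connected:iota:kappa} defining $\prob_{\mathit{connected}}$ as a ratio of counts of pillowcase covers and then to invoke the asymptotic decorrelation of the horizontal and vertical decompositions. Fix the pair of plane trees $\cD=(\cT_{top},\cT_{bottom})$ and set $\nu=\nu_{top}+\nu_{bottom}$. By Proposition~\ref{prop:pairs:multicurves:pillowcase:covers}, gluing an $n$-arc system of type $\cT_{top}$ to an $n$-arc system of type $\cT_{bottom}$ along the equator, with a choice of one of the $2n$ twists, produces a transverse connected pair of multicurves, hence a genus~$0$ pillowcase cover in the stratum $\cQ(\nu,-1^{|\nu|+4})$ tiled by $2n$ squares, whose horizontal separatrix diagram is exactly $\cD$ and which therefore has a single horizontal cylinder of height $\tfrac12$. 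A triple yields a \emph{meander} precisely when the vertical foliation is connected as well, i.e.\ when the cover carries in addition a single vertical cylinder of width $\tfrac12$. Thus the numerator of~\eqref{eq:p:connected:iota:kappa} is the subcollection of the triples counted by the denominator that is cut out by the single-vertical-cylinder condition; any passage between triples and the weighted covers of Convention~\ref{conv:symmetry} is the same on both sides and cancels in the ratio (covers with nontrivial automorphisms are in any case asymptotically negligible).

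First I would record the two leading asymptotics attached to the fixed diagram $\cD$. With $d=\dim_{\mathbb C}\cQ(\nu,-1^{|\nu|+4})=\ell(\nu)+|\nu|+2$ as in~\eqref{eq:dim}, the refinements to a fixed horizontal diagram of the counts~\eqref{eq:c1:N:d} and~\eqref{eq:c11:Q:nu} established in~\cite{DGZZ} (and recalled in Theorem~\ref{th:c1:in:genus:0}) read
\begin{align*}
\#\{\text{triples with horizontal diagram }\cD,\ \text{at most }2N\text{ squares}\}&= c_{\mathrm{hor}}(\cD)\cdot\frac{N^{d}}{2d}+o(N^{d}),\\
\#\{\text{such triples giving a meander}\}&= c_{\mathrm{mean}}(\cD)\cdot\frac{N^{d}}{2d}+o(N^{d}),
\end{align*}
and summing over all diagrams with $\nu_{top}+\nu_{bottom}=\nu$ recovers $\cyl_1(\cQ(\nu,-1^{|\nu|+4}))$ and $\cyl_{1,1}(\cQ(\nu,-1^{|\nu|+4}))$, respectively. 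Dividing the two expressions, the common factor $N^{d}/2d$ cancels, so the limit exists and equals $c_{\mathrm{mean}}(\cD)/c_{\mathrm{hor}}(\cD)$.

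The crux, and the step I expect to be the main obstacle, is to show that this per-diagram ratio does not depend on $\cD$ and equals the stratum-wide quantity $\prob_1(\cQ(\nu,-1^{|\nu|+4}))=\cyl_1/\Vol$. The stratum relation~\eqref{eq:c11:as:c1:squared:over:Vol}, $\cyl_{1,1}=\cyl_1^{2}/\Vol$, only yields the averaged identity $\big(\sum_{\cD}c_{\mathrm{mean}}(\cD)\big)/\big(\sum_{\cD}c_{\mathrm{hor}}(\cD)\big)=\cyl_1/\Vol$. To pass from the average to each individual $\cD$ one must invoke the asymptotic decorrelation of horizontal and vertical separatrix diagrams from~\cite{DGZZ}: conditioning on any fixed horizontal diagram, the frequency with which the vertical direction forms a single cylinder still tends to $\cyl_1/\Vol$, whence $c_{\mathrm{mean}}(\cD)/c_{\mathrm{hor}}(\cD)=\cyl_1/\Vol$ independently of $\cD$. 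This is exactly the phenomenon, noted after Theorem~\ref{th:trivalent:trees:connected:proportion}, that the distribution of the leaves between the two trees is irrelevant. Positivity of the limit then follows from $\cyl_1>0$ (see after~\eqref{eq:c1:N:d}) and $\Vol>0$.

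It remains to justify the closed form. Formula~\eqref{eq:volume} supplies $\Vol\cQ(\nu,-1^{|\nu|+4})$, and the explicit value~\eqref{eq:c1:nu} of $\cyl_1$ is the separate combinatorial computation carried out in Theorem~\ref{th:c1:in:genus:0} of Section~\ref{s:Computations:for:pillowcase:covers}: it sums the per-diagram weights $c_{\mathrm{hor}}(\cD)$ over all admissible pairs of trees, where a subpartition $\mu\subset\nu$ records how the $\nu_\degofz$ zeroes of each order $\degofz$ are split between $\cT_{top}$ and $\cT_{bottom}$, the factor $\binom{\nu_\degofz}{\mu_\degofz}$ counting that split and $\binom{|\nu|+4}{|\mu|+2}$ counting the distribution of the $|\nu|+4$ simple poles. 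Substituting~\eqref{eq:volume} and~\eqref{eq:c1:nu} into $\prob_1=\cyl_1/\Vol$ gives the asserted value and completes the proof.
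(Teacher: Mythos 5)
Your proposal is correct and follows essentially the same route as the paper: translate triples into pillowcase covers with a fixed horizontal separatrix diagram $\cD$ and a single horizontal cylinder of height $\tfrac12$, observe that meanders correspond to the additional single-vertical-cylinder condition, and then invoke the asymptotic decorrelation of horizontal and vertical decompositions from~\cite{DGZZ} (Theorem~1.19 there) to replace the conditional frequency by the unconditional one $\cyl_1/\Vol$, with the closed forms deferred to Theorem~\ref{th:c1:in:genus:0}. Your explicit remark that the averaged identity $\cyl_{1,1}=\cyl_1^2/\Vol$ alone would not suffice and that the per-diagram decorrelation is the essential input is exactly the point on which the paper's proof also rests.
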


\begin{proof}
Under our correspondence which associates to pairs of
transverse multicurves square-tiled surfaces, the trees
$\cT_{top}$ and $\cT_{bottom}$ represent the trees formed
by the horizontal saddle connections of the square-tiled
surface (see
Section~\ref{s:Computations:for:square:tiled:surfaces} for
details).

Vertices of valence one are in bijective correspondence
with simple poles. Vertices of valence two represent marked
points (if any). Vertices of valence $\degofz+2$ are in
bijective correspondence with zeroes of degree $\degofz$
for $\degofz\in\N$. Recall that the type $\nu=[1^{\nu_1}
2^{\nu_2} 3^{\nu_3} \dots]$ of the graph
$\cT_{top}\sqcup\cT_{bottom}$ encodes the total number
$\nu_\degofz$ of vertices of valence $\degofz+2$ in
$\cT_{top}\sqcup\cT_{bottom}$ for $\degofz\in\N$. We
conclude that a pair of arc systems having $\cT_{top}$
and $\cT_{bottom}$ as dual trees defines a square-tiled
surface in the stratum $\cQ^{\textit{non-labeled}}(\nu,-1^{|\nu|+4})$ of
meromorphic quadratic differentials.

We are ready to express the numerator and the denominator
of the right hand side in~\eqref{eq:p:connected:iota:kappa}
in terms of square-tiled surfaces. First, note that arc
systems are defined on a pair of labeled oriented discs
(called top and bottom, or northern and southern
hemispheres).

Every triple
$$
(\text{$n$-arc system of type $\cT_{top}$;
 $n$-arc system of type $\cT_{bottom}$;
 identification})
$$
in the denominator in~\eqref{eq:p:connected:iota:kappa}
defines a unique isomorphism class of ordered transverse
connected pairs of multicurves in the sense of
Definition~\ref{def:pair:of:multicurves}, such that the
``horizontal'' multicurve is connected and oriented, and
the number of intersections is equal to $2n$. We identify
triples leading to isomorphic pairs of labeled multicurves.
Pairs of multicurves associated to the triples from the
numerator of~\eqref{eq:p:connected:iota:kappa} are
distinguished by the property that both ``horizontal'' and
``vertical'' multicurves are connected.

Applying
Proposition~\ref{prop:pairs:multicurves:square:tiled:surfaces}
and constructions of
Section~\ref{ss:pairs:of:multicurves:as:square:tiled:surfaces}
we associate square-tiled surfaces to the resulting
transverse connected pairs of multicurves. We get a
surjective map from the triples in the denominator
of~\eqref{eq:p:connected:iota:kappa} to the set of
square-tiled surfaces in the stratum
$\cQ^{\textit{non-labeled}}(\nu,-1^{|\nu|+4})$ having a
single horizontal band of squares, and having the
ribbon graph $\Gamma:=\cT_{top}\sqcup\cT_{bottom}$ as the
diagram of horizontal saddle connections. We denote the
number of such square-tiled surfaces tiled with at most
$2N$ identical squares by $\cS_{\Gamma,1,\nu}(N)$.

Restricting the above map to the triples in the numerator
of~\eqref{eq:p:connected:iota:kappa} we get a surjective
map to the subset of square-tiled surfaces as above which
have a single horizontal and a single vertical band of
squares. We denote the number of such square-tiled surfaces
tiled with at most $2N$ identical squares by
$\cS_{\Gamma,1,1,\nu}(N)$.

Note that the waist curve of the
horizontal cylinder of the resulting square-tiled surface
is not oriented, while the boundary circle of each arc
system in the disc is oriented. When
$\cT_{top}$ and $\cT_{bottom}$ are not isomorphic as ribbon
graphs, there is a canonical way to orient the waist curve
of the single horizontal cylinder:
choose the orientation of the vertical
direction so that $\cT_{top}$ is on top of the cylinder;
the canonical orientation of the cylinder
determines the orientation of the waist curve.
In this case the constructed
map is a bijection.

When $\cT_{top}$ and $\cT_{bottom}$ are isomorphic, a
negligible part of square-tiled surfaces as above has extra
symmetry, namely, an isometry interchanging the two
components of the cylinder. Such isometry changes the
orientation of the horizontal waist curve. When $\cT_{top}$
and $\cT_{bottom}$ are isomorphic we count square-tiled
surfaces in the image of the above map with weight $1/2$
when the square-tiled surface is symmetric in the above
sense and with weight $1$ otherwise. In this case both
the numerator and the denominator
in~\eqref{eq:p:connected:iota:kappa} give twice the
weighted count of the associated square-tiled surfaces.

Thus, the limit in
Theorem~\ref{th:any:trees:connected:proportion}
can be expressed as follows:
\begin{equation}
\label{eq:limit:top:bottom}
\lim_{N\to+\infty} \prob_{\mathit{connected}}(\cT_{top}, \cT_{bottom}; N)
=\lim_{N\to+\infty}
\frac{\cS_{\Gamma,1,1,\nu}(N)}{\cS_{\Gamma,1,\nu}(N)}\,.
\end{equation}

It will be convenient to pass to the quantities
$\cS^{\textit{labeled}}_{\Gamma,1,1,\nu}(N)$ and
$\cS^{\textit{labeled}}_{\Gamma,1,\nu}(N)$ counting the
square-tiled surfaces as above, but with labeled zeroes and
poles. By Remark~\ref{rm:labeled:zeroes:and:poles} they
differ from $\cS_{\Gamma,1,1,\nu}(N)$ and
$\cS_{\Gamma,1,\nu}(N)$ by the common constant factor and
hence, we get the same ratio as above. By
Lemma~\ref{lm:fixed:Gamma:h:kvert} at the end of
Section~\ref{s:sep:diag:densities} we have
$$
\lim_{N\to+\infty}
\frac{\cS^{\textit{labeled}}_{\Gamma,1,1,\nu}(N)}{\cS^{\textit{labeled}}_{\Gamma,1,\nu}(N)}
=
\lim_{N\to+\infty}
\frac{\cS^{\textit{labeled}}_{1,\nu}(N)}{\cS^{\textit{labeled}}_{\nu}(N)}\,.
$$
The above formula is just another manifestation of the general
principal according to which ``horizontal and vertical
cylinder decompositions are asymptotically uncorrelated'':
defining the counting functions on the right hand side of
the above equation we omit the conditions on the horizontal
decomposition imposed in the definition of the
corresponding counting functions on the left hand side.

Applying~\eqref{eq:VolQ:N:d} and~\eqref{eq:c1:N:d} for the
numerator and the denominator of the latter fraction
respectively, we prove that the limit in the left hand side
of~\eqref{eq:limit:top:bottom} exists and that its value is
given by~\eqref{eq:probability}. The remaining
formula~\eqref{eq:c1:nu} is equivalent to
formula~\eqref{eq:c1:answer} from
Theorem~\ref{th:c1:in:genus:0} which will be proved in
Section~\ref{s:Computations:for:square:tiled:surfaces}.
\end{proof}

\begin{proof}[Proof of Theorem~\ref{th:trivalent:trees:connected:proportion}]
Theorem~\ref{th:trivalent:trees:connected:proportion} is a particular case
of the Theorem~\ref{th:any:trees:connected:proportion} when the
plane trees $\cT_{top},\cT_{bottom}$
are trivalent and
have the total number $p$ of leaves
(vertices of valence one). In this situation
$\nu=[1^{p-4}]$.
By Theorem~\ref{th:any:trees:connected:proportion} we have
$$
\lim_{N\to+\infty}
\prob_{\mathit{connected}}(\cT_{top},\cT_{bottom}; N)
=
\prob_1(\cQ(1^{p-4},-1^p))
=
\frac{\cyl_1(\cQ(1^{p-4},-1^p))}
{\Vol_1 \cQ_1(1^{p-4},-1^p)}\,.
$$
It remains to apply~\eqref{eq:volume}
and~\eqref{eq:c1:principal:answer} respectively
for the denominator and the numerator of the
latter fraction to complete the proof. Formula~\eqref{eq:c1:principal:answer}
will be proved in
Section~\ref{s:Computations:for:square:tiled:surfaces}.
\end{proof}

%----------------------------------------------------------------------
\subsection{Counting meanders of special combinatorial types}
\label{ss:Proofs:number:of:meanders}

In this Section we return to plane meanders with exception
for the proof of
Theorem~\ref{th:p:fixed:number:of:leaves:general} at the
very end of the Section, where we work with meanders on the
sphere.

We state now an analog of Theorem~\ref{th:meander:counting}, where
instead of the number of minimal arcs (pimples) we use the partition
$\nu$ as a combinatorial passport of the meander.

\begin{Theorem}
\label{gth:meanders:fixed:stratum}
For any partition $\nu=[1^{\nu_1} 2^{\nu_2} 3^{\nu_3} \dots]$,
the number
$\cM^+_\nu(N)$ (respectively $\cM^-_\nu(N)$) of
plane meanders of type $\nu$, with (respectively without)
a maximal arc and with at most $2N$ crossings has
the following asymptotics as $N\to+\infty$:
\begin{align}
\label{eq:asymptotics:nu:plus}
\cM^+_\nu(N)&=
2(|\nu|+4)\cdot
\frac{\cyl_{1,1}\big(\cQ(\nu,-1^{|\nu|+4})\big)}
{(|\nu|+4)!\cdot\prod_j \nu_j!}
\cdot
\frac{N^{\ell(\nu)+|\nu|+2}}{2\ell(\nu)+2|\nu|+4}
\ +\\
\notag
&\hspace{2.7in} +
o\big(N^{\ell(\nu)+|\nu|+2}\big)\,,
%\text{ when }N\to+\infty\,.
\\
\label{eq:asymptotics:nu:minus}
\cM^-_\nu(N)&=
\frac{2\,\cyl_{1,1}\big(\cQ(\nu,0,-1^{|\nu|+4})\big)}
{(|\nu|+4)!\cdot\prod_j \nu_j!}
\cdot
\frac{N^{\ell(\nu)+|\nu|+3}}{2\ell(\nu)+2|\nu|+6}
\ +\\
\notag
&\hspace{2.7in} +
o\big(N^{\ell(\nu)+|\nu|+3}\big)\,.
%\text{ when }N\to+\infty\,,
\end{align}
Moreover, we have
\begin{multline}
\label{eq:c11:nu:in:Th4}
\cyl_{1,1}\big(\cQ(\nu,-1^{|\nu|+4})\big)
=\\=
\frac{4}{\Vol_1 \cQ_1(\nu,-1^{|\nu|+4})}\cdot
\left(
\sum_{\iota_1=0}^{\nu_1}
\sum_{\iota_2=0}^{\nu_2}
\sum_{\dots}^{\dots}
\binom{\nu_1}{\iota_1}
\binom{\nu_2}{\iota_2}
\cdots
\binom{|\nu|+4}{|\iota|+2}
\right)^2
\end{multline}
and
\begin{equation}
\label{eq:c11:with:0:through:c11:without:0}
\cyl_{1,1}\big(\cQ(\nu,0,-1^{|\nu|+4})\big)
=
2\cdot \cyl_{1,1}\big(\cQ(\nu,-1^{|\nu|+4})\big)\,.
\end{equation}
\end{Theorem}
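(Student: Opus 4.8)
The plan is to convert the meander counts into weighted counts of pillowcase covers via Lemma~\ref{lm:M:through:P:nu}, to pass from unlabeled to labeled singularities, and then to feed in the asymptotics~\eqref{eq:c11:Q:nu}; the two explicit constants are then extracted from~\eqref{eq:c11:as:c1:squared:over:Vol} together with the closed formula~\eqref{eq:c1:nu} for $\cyl_1$. I begin with $\cM^+_\nu(N)$. By Lemma~\ref{lm:M:through:P:nu} we have $\cM^+_\nu(N)=2(|\nu|+4)\,\cP_\nu(N)$, where $\cP_\nu(N)$ is the weighted (by $1/|\Aut|$, per Convention~\ref{conv:symmetry}) count of pillowcase covers with unlabeled zeroes and poles. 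An orbit--stabilizer computation shows that weighting labeled and unlabeled covers alike by $1/|\Aut|$ makes the labeled count exactly $L:=(|\nu|+4)!\cdot\prod_j\nu_j!$ times the unlabeled one, so $\cP^{\mathit{labeled}}_\nu(N)=L\cdot\cP_\nu(N)$ (the weighting is irrelevant to leading order, as generic labeled covers are automorphism-free). Substituting the asymptotics~\eqref{eq:c11:Q:nu} with $d=\ell(\nu)+|\nu|+2$ gives~\eqref{eq:asymptotics:nu:plus} directly.

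For $\cM^-_\nu(N)$ I would start from the second identity of Lemma~\ref{lm:M:through:P:nu},
\[
\cM^-_\nu(N)=\sum_{\degofz=0}^{|\nu|}(\degofz+2)\,\cP_{\nu,\degofz}(N)\,-\,\tfrac12\,\cM^+_\nu(N),
\]
and compare growth rates. Marking an existing zero of positive order $\degofz$ keeps the cover in the same stratum $\cQ(\nu,-1^{|\nu|+4})$ of dimension $d$, so every summand with $\degofz>0$ is $O(N^{d})$; the correction $\tfrac12\cM^+_\nu(N)$ is likewise $O(N^{d})$. In contrast, marking a regular point ($\degofz=0$) is the same as inserting a zero of order $0$, i.e.\ moving to the stratum $\cQ(\nu,0,-1^{|\nu|+4})$, whose dimension is $d+1=\ell(\nu)+|\nu|+3$; hence the $\degofz=0$ term is the only one of order $N^{d+1}$ and dominates. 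Thus $\cM^-_\nu(N)=2\,\cP_{\nu,0}(N)+o(N^{d+1})$, and since labeling the single order-zero zero is trivial, the same bookkeeping $\cP^{\mathit{labeled}}_{(\nu,0)}(N)=L\cdot\cP_{\nu,0}(N)$ applies; feeding in~\eqref{eq:c11:Q:nu} for $\cQ(\nu,0,-1^{|\nu|+4})$ yields~\eqref{eq:asymptotics:nu:minus}.

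It remains to evaluate the two constants. Formula~\eqref{eq:c11:nu:in:Th4} is obtained by inserting~\eqref{eq:c1:nu} (with $\nu_0=0$, so the factor $\binom{\nu_0}{\mu_0}$ is trivial) into~\eqref{eq:c11:as:c1:squared:over:Vol}, the prefactor $4$ being the square of the leading $2$ in~\eqref{eq:c1:nu}. For~\eqref{eq:c11:with:0:through:c11:without:0} I would compare the strata $\cQ(\nu,0,-1^{|\nu|+4})$ and $\cQ(\nu,-1^{|\nu|+4})$: since $f(0)=2$, formula~\eqref{eq:volume} gives $\Vol\cQ(\nu,0,-1^{|\nu|+4})=2\,\Vol\cQ(\nu,-1^{|\nu|+4})$, while in~\eqref{eq:c1:nu} the new factor $\binom{1}{\mu_0}$ with $\mu_0\in\{0,1\}$ (both terms equal, as an order-zero zero contributes nothing to $|\mu|$) doubles the sum, whence $\cyl_1(\cQ(\nu,0,-1^{|\nu|+4}))=2\,\cyl_1(\cQ(\nu,-1^{|\nu|+4}))$. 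Plugging these into~\eqref{eq:c11:as:c1:squared:over:Vol} produces the net factor $2^2/2=2$ of~\eqref{eq:c11:with:0:through:c11:without:0}.

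The step I expect to be most delicate is the growth-rate comparison in the formula for $\cM^-_\nu(N)$: one must be sure that marking a regular point genuinely raises the dimension of the ambient stratum by one (via the identification of a marked point with an order-zero zero), while marking an existing positive-order zero and the $\cM^+$ correction stay a full power of $N$ below, so that only the $\degofz=0$ term survives at top order. The labeling-versus-automorphism bookkeeping is the other place to watch, since an overlooked factorial or automorphism factor would corrupt the constants; it is controlled by the orbit--stabilizer identity above and Convention~\ref{conv:symmetry}.
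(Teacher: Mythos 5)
Your proposal is correct and follows essentially the same route as the paper: the paper likewise composes Lemma~\ref{lm:M:through:P:nu} with the labeled/unlabeled conversion~\eqref{eq:P:labeled:through:non} and the asymptotics~\eqref{eq:P:nu:as:c11} and~\eqref{eq:P:nu:0:as:c11}, isolates the $\degofz=0$ term by the same dimension count (Lemma~\ref{lm:P:Pprincipal}, equation~\eqref{eq:P:nu:d:to:P:nu:0}), and extracts the constants from~\eqref{eq:c11:as:c1:squared:over:Vol}, \eqref{eq:c1:nu} and the doubling argument of Corollary~\ref{cor:leading:0}. Your growth-rate and bookkeeping checks match the paper's treatment exactly.
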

%\end{GeneralizedTheoremMeanders}

Note that contrary to the original Theorem~\ref{th:meander:counting},
where the setting is somewhat misleading, in the setting of
Theorem~\ref{gth:meanders:fixed:stratum} we get more natural
formula $\cM^+_\nu(N)=o\big(\cM^-_\nu(N)\big)$ as $N\to+\infty$.

Up to now we performed the exact count. The Lemma below gives the
term with dominating contribution to the asymptotic count when
the bound $2N$ for
the number of squares in the square-tiled surface tends to
infinity.

\begin{Lemma}
\label{lm:P:Pprincipal}
We have the following limits:
\begin{align}
\label{eq:P:pN:to:principal}
\lim_{N\to+\infty}& \frac{1}{\cP_{1^{p-4}}(N)}\cdot
\cP_p(N)
= 1\,,
\\
\label{eq:Pd:pN:to:principal:0}
\lim_{N\to+\infty}& \frac{1}{2\,\cP_{1^{p-4},0}(N)}\cdot
\left(\sum_{\degofz=0}^{p-4} (\degofz+2)\cdot\cP_{p,\degofz}(N)\right)
= 1\,,
\\
\label{eq:P:nu:d:to:P:nu:0}
\lim_{N\to+\infty}& \frac{1}{2\,\cP_{\nu,0}(N)}\cdot
\left(\sum_{\degofz=0}^{|\nu|} (\degofz+2)\cdot \cP_{\nu,\degofz}(N)\right) =1\,.
\end{align}
\end{Lemma}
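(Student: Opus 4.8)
The plan is to reduce all three statements to a single principle: each of the counting functions $\cP_\nu(N)$ and $\cP_{\nu,\degofz}(N)$ grows polynomially in $N$, and the power of $N$ equals the complex dimension of the stratum to which the marked pillowcase covers belong. Concretely, by the asymptotics~\eqref{eq:c11:Q:nu} together with the dimension formula~\eqref{eq:dim}, a family of pillowcase covers with a single horizontal and a single vertical cylinder lying in $\cQ(\nu,-1^{|\nu|+4})$ has cardinality $\sim c\, N^{d}$ with $d=\ell(\nu)+|\nu|+2$ and $c>0$. Marking an already existing zero of order $\degofz>0$ only selects one of the finitely many such zeroes and leaves the stratum, hence the dimension $d$, unchanged, so $\cP_{\nu,\degofz}(N)\sim c''\,N^{\ell(\nu)+|\nu|+2}$. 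By contrast, marking a regular vertex amounts to passing to the stratum $\cQ(\nu,0,-1^{|\nu|+4})$ carrying one extra order-$0$ zero, whose dimension is $\ell(\nu)+|\nu|+3$; hence $\cP_{\nu,0}(N)\sim c'\,N^{\ell(\nu)+|\nu|+3}$, one power of $N$ higher than every $\degofz>0$ contribution.

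Granting this, I would prove~\eqref{eq:P:pN:to:principal} first. Writing $\cP_p(N)=\sum_{|\nu|=p-4}\cP_\nu(N)$ as a sum over the strata with $p$ simple poles, the exponent $\ell(\nu)+|\nu|+2=\ell(\nu)+p-2$ is maximised precisely when the number of parts $\ell(\nu)$ is largest, that is for the principal partition $\nu=1^{p-4}$, where $\ell(\nu)=p-4$ and $d=2p-6$. Every other stratum has strictly fewer parts and hence strictly smaller dimension, so its contribution is $o(N^{2p-6})$. Therefore $\cP_p(N)=\cP_{1^{p-4}}(N)+o(N^{2p-6})=\cP_{1^{p-4}}(N)\,(1+o(1))$, which is exactly~\eqref{eq:P:pN:to:principal}.

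Next I would treat~\eqref{eq:P:nu:d:to:P:nu:0} and~\eqref{eq:Pd:pN:to:principal:0} in the same spirit. In the finer sum $\sum_{\degofz=0}^{|\nu|}(\degofz+2)\,\cP_{\nu,\degofz}(N)$ of~\eqref{eq:P:nu:d:to:P:nu:0}, the term $\degofz=0$ equals $2\,\cP_{\nu,0}(N)\sim 2c'\,N^{\ell(\nu)+|\nu|+3}$, while each term with $\degofz>0$ is $O(N^{\ell(\nu)+|\nu|+2})$; hence the $\degofz=0$ term dominates and the quotient by $2\,\cP_{\nu,0}(N)$ tends to $1$. For~\eqref{eq:Pd:pN:to:principal:0} the same argument applies after summing over all strata with $p$ poles: among the marked-regular-point counts the principal stratum again wins by one extra part, so $\cP_{p,0}(N)=\cP_{1^{p-4},0}(N)+o(N^{2p-5})$, while all $\degofz>0$ contributions are $O(N^{2p-6})$. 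The dominant term of the sum is thus $2\,\cP_{p,0}(N)\sim 2\,\cP_{1^{p-4},0}(N)$, giving the limit $1$.

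The technical heart of the argument, and the step I expect to require the most care, is the bookkeeping relating the weighted, unlabeled marked counts $\cP_{\nu,\degofz}(N)$ to the labeled stratum asymptotics~\eqref{eq:c11:Q:nu}: one must verify that marking a regular vertex genuinely corresponds to inserting an order-$0$ zero, so that the dimension increases by exactly one and the leading constant stays positive, consistently with the doubling recorded in~\eqref{eq:c11:with:0:through:c11:without:0}, and that the automorphism weights of Convention~\ref{conv:symmetry} are compatible with this identification. Once the correct power of $N$ is attached to each term, all three limits follow from the elementary fact that a finite sum of polynomially growing quantities is asymptotic to its summand of largest degree.
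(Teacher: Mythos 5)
Your proposal is correct and follows essentially the same route as the paper: both arguments rest on the polynomial asymptotics of the one-cylinder counts from Theorem~1.19 of~\cite{DGZZ}, the observation that marking an existing zero leaves the stratum (hence the exponent $d=\ell(\nu)+|\nu|+2$) unchanged while marking a regular vertex passes to $\cQ(\nu,0,-1^{|\nu|+4})$ and raises the exponent by one, and the fact that among strata with $p$ poles the principal one uniquely maximises $\ell(\nu)$ and hence the dimension. The dominance comparisons you draw from these exponents are exactly the ones the paper uses to establish all three limits.
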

\begin{proof}
Let $\nu=[1^{\nu_1} 2^{\nu_2}\dots]$ be a partition of
$p-4$. By definition, $\cP_\nu(N)$ and $\cS_{1,1,\nu}(N)$
denote the same quantities. Using
Remark~\ref{rm:labeled:zeroes:and:poles} to express
$\cS_{1,1,\nu}(N)$ in terms of
$\cS^{\textit{labeled}}_{1,1,\nu}(N)$ and
applying~\eqref{eq:c11:Q:nu} with $k_h=k_v=1$ we get
\begin{multline}
\label{eq:P:nu:as:c11}
\cP_\nu(N)
=\cS_{1,1,\nu}(N)
=\frac{1}{p!\cdot \prod_j \nu_j!}
\cdot\cS^{\textit{labeled}}_{1,1,\nu}(N)
=\\=
\frac{\cyl_{1,1}\big(\cQ(\nu,-1^p)\big)}
{p!\cdot \prod_j \nu_j!}
\cdot
\frac{N^d}{2d} + o(N^d)\,,
\text{ when }N\to+\infty\,.
\end{multline}
where $d$ is defined in~\eqref{eq:dim}.

For a given number $p\ge 4$ of simple poles, the only
stratum of the maximal dimension $2p-6$
is the principal stratum
$\cQ(1^{p-4},-1^p)$, for which all zeroes
of the quadratic differentials
are simple. Thus, this is
the only stratum which contributes a term of order
$N^{2p-6}$ to $\cP_p(N)$. This proves~\eqref{eq:P:pN:to:principal}.

For $\degofz\ge 1$ the quantity $\cP_{\nu,\degofz}(N)$
counts square-tiled surfaces with a marked zero of order
$\degofz$ in the stratum $\cQ^{\textit{non-labeled}}(\nu,-1^{|\nu|+4})$. Hence, it
has the asymptotic growth rate of the same order as the
quantity $\cP_{\nu}(N)$ counting unmarked square-tiled
surfaces in the same stratum, i.~e. it grows like $N^d$,
where $d=\dim_{\mathbb{C}}\cQ(\nu,-1^{|\nu|+4})$. The
dimensional count as above implies that the contribution of
any term $\cP_{\nu,\degofz}(N)$ with $\degofz\ge 1$ to the
sum in the right hand side
of~\eqref{eq:Pd:pN:to:principal:0} has the order at most
$N^{2p-6}$.

Let us analyse now the contribution of various strata to $\cP_{p,0}(N)$.
In the same way as we got~\eqref{eq:P:nu:as:c11} we obtain
\begin{multline}
\label{eq:P:nu:0:as:c11}
\cP_{\nu,0}(N) =
\frac{\cyl_{1,1}\big(\cQ(\nu,0,-1^{|\nu|+4})\big)}
{(|\nu|+4)!\cdot\prod_j \nu_j!}
\cdot
\frac{N^{\ell(\nu)+|\nu|+3}}{2\ell(\nu)+2|\nu|+6}
\ +\\+\
o\big(N^{\ell(\nu)+|\nu|+3}\big)\,,
\text{ when }N\to+\infty\,,
\end{multline}
where the constant
$\cyl_{1,1}\big(\cQ(\nu,0,-1^{|\nu|+4})\big)$ is strictly
positive. This implies that for any partition $\nu$ of
$p-4$ different from $1^{p-4}$, its contribution
$\cP_{\nu,0}$ also has order at most $N^{2p-6}$. We
conclude that $\cP_{p,0}(N)$ has order $N^{2p-5}$ for $N$
large, and that the only stratum which gives a contribution
of this order is the principal stratum with a marked point
$\cQ(1^{p-4},0,-1^p)$. This proves
equality~\eqref{eq:Pd:pN:to:principal:0}.

By the same reason the summand $2\,\cP_{\nu,0}(N)$ dominates in the sum in the right-hand side
of~\eqref{eq:P:nu:d:to:P:nu:0}. It is the only term whose
contribution is of order $N^{d+1}$, where
$d=\dim_{\mathbb{C}}\cQ(\nu,-1^{|\nu|+4})$. The asymptotics of other
terms in the sum have lower orders in $N$ as $N\to+\infty$. This proves
equality~\eqref{eq:P:nu:d:to:P:nu:0}.
\end{proof}

Now we have everything for the proofs of
Theorems~\ref{th:meander:counting} and~\ref{gth:meanders:fixed:stratum}.

\begin{proof}[Proof of Theorem~\ref{th:meander:counting}]
The chain of relations starting with~\eqref{eq:Mplus:P} and
continuing with~\eqref{eq:P:pN:to:principal}
and~\eqref{eq:P:nu:as:c11} yields
\begin{multline*}
\cM^+_p(N)= 2(p+1)\cdot \cP_{p+1}(N)
=
2(p+1)\cdot \cP_{1^{p-3}}(N) + o(N^{2p-4})\
=\\=
2(p+1)\cdot
\frac{\cyl_{1,1}\big(\cQ(1^{p-3},-1^{p+1})\big)}
{(p+1)!\,(p-3)!}
\cdot
\frac{N^{2p-4}}{4p-8} + o(N^{2p-4})\,
\text{ when }N\to+\infty\,.
\end{multline*}
This proves the first equality
in~\eqref{eq:asymptotics:with}. The constant
$\cyl_{1,1}\big(\cQ(1^{p-3},-1^{p+1})\big)$ is expressed by
Formula~\eqref{eq:c11:as:c1:squared:over:Vol} in
terms of $\cyl_1\big(\cQ(1^{p-3},-1^{p+1})\big)$ computed
in Corollary~\ref{cor:principal} of
Section~\ref{s:Computations:for:square:tiled:surfaces} and
in terms of the Masur--Veech volume of the stratum
$\cQ(1^{p-3},-1^{p+1})$ given by Formula~\eqref{eq:volume}.

Similarly, the chain of relations including
\eqref{eq:Mminus:P},
\eqref{eq:Pd:pN:to:principal:0} and
\eqref{eq:P:nu:0:as:c11} implies
\begin{multline*}
\cM^-_p(N) = \sum_{\degofz=0}^{p-4} (\degofz+2)\cdot \cP_{p,\degofz}(N)
\,-\,\frac{1}{2}\,\cM_{p-1}^+(N)
\ =\
2\,\cP_{1^{p-4},0}(N)+ o(N^{2p-5})\
=\\=
\frac{2\,\cyl_{1,1}\big(\cQ(1^{p-4},0,-1^p)\big)}
{p!\,(p-4)!}\cdot
\frac{N^{2p-5}}{4p-10} + o(N^{2p-5})\,,
\text{ when }N\to+\infty\,.
\end{multline*}

This proves the first equality in~\eqref{eq:asymptotics:without}. The
constant $\cyl_{1,1}\big(\cQ(1^{p-4},0,-1^p)\big)$ is expressed by our
main formula~\eqref{eq:c11:as:c1:squared:over:Vol} in terms of
$\cyl_1\big(\cQ(1^{p-4},0,-1^p)\big)$ computed in
Corollary~\ref{cor:principal} of
Section~\ref{s:Computations:for:square:tiled:surfaces} and in terms of
the Masur--Veech volume of the stratum $\cQ(1^{p-4},0,-1^p)$ given by
formula~\eqref{eq:volume}.

Thus, the proof of Theorem~\ref{th:meander:counting} is conditional subject to the
explicit count of $\cyl_1\big(\cQ(1^{p-3},-1^{p+1})\big)$ and of
$\cyl_1\big(\cQ(1^{p-4},0,-1^p)\big)$ performed in
Corollary~\ref{cor:principal} below.
\end{proof}

\begin{proof}[Proof of Theorem~\ref{gth:meanders:fixed:stratum}]
The proof of Theorem~\ref{gth:meanders:fixed:stratum} is completely
analogous to the proof of Theorem~\ref{th:meander:counting}.

Combining~\eqref{eq:Mplus:P:nu} with
relation~\eqref{eq:P:nu:as:c11}, we get
\begin{multline*}
\cM^+_\nu(N)= 2(|\nu|+4)\cdot \cP_\nu(N)
\ =\\=\
2(|\nu|+4)\cdot
\frac{\cyl_{1,1}\big(\cQ(\nu,-1^{|\nu|+4})\big)}
{(|\nu|+4)!\cdot\prod_j \nu_j!}
\cdot
\frac{N^{\ell(\nu)+|\nu|+2}}{2\ell(\nu)+2|\nu|+4}
\ +\\+\ o\big(N^{\ell(\nu)+|\nu|+2}\big)\,,
\text{ when }N\to+\infty\,,
\end{multline*}
where we use Formula~\eqref{eq:dim} for the dimension $d$
of the stratum $\cQ(\nu,-1^{|\nu|+4})$. This proves
formula~\eqref{eq:asymptotics:nu:plus} in
Theorem~\ref{gth:meanders:fixed:stratum}.

Similarly, combining~\eqref{eq:Mminus:P:nu}
with~\eqref{eq:P:nu:d:to:P:nu:0}
and~\eqref{eq:P:nu:0:as:c11} we get
\begin{multline*}
\cM^-_\nu(N)
=
\sum_{\degofz=0}^{|\nu|} (\degofz+2)\cdot \cP_{\nu,\degofz}(N)
\,-\,\frac{1}{2}\,\cM_{\nu}^+(N)
=
2\,\cP_{\nu,0}(N)+o\big(N^{2\ell(\nu)+2|\nu|+3}\big)
=\\=
\frac{2\,\cyl_{1,1}\big(\cQ(\nu,0,-1^{|\nu|+4})\big)}
{(|\nu|+4)!\cdot\prod_j \nu_j!}
\cdot
\frac{N^{\ell(\nu)+|\nu|+3}}{2\ell(\nu)+2|\nu|+6}
\ +\\+\
o\big(N^{\ell(\nu)+|\nu|+3}\big)\,,
\text{ when }N\to+\infty\,.
\end{multline*}
This proves formula~\eqref{eq:asymptotics:nu:minus}
in Theorem~\ref{gth:meanders:fixed:stratum}.

We have proved Theorem~\ref{gth:meanders:fixed:stratum} conditional to
expressions~\eqref{eq:c11:with:0:through:c11:without:0}
and~\eqref{eq:c11:nu:in:Th4} for the quantities
$\cyl_{1,1}(\cQ(\nu,0,-1^{|\nu|+4}))$ and
$\cyl_{1,1}(\cQ(\nu,-1^{|\nu|+4}))$. We prove them
together with relation~\eqref{eq:c1:answer} in
Section~\ref{s:Computations:for:square:tiled:surfaces}.
\end{proof}

We conclude this Section with the proof of
Theorem~\ref{th:p:fixed:number:of:leaves:general}.

\begin{proof}[Proof of Theorem~\ref{th:p:fixed:number:of:leaves:general}]
Following the same arguments as in the
proof of Theorem~\ref{th:any:trees:connected:proportion}
we obtain
$$
\lim_{N\to+\infty} \prob_{\mathit{connected}}(p; N)
=\lim_{N\to+\infty}
\frac{\sum_{|\nu|=p-4}\cS^{\textit{labeled}}_{1,1,\nu}(N)}
{\sum_{|\nu|=p-4}\cS^{\textit{labeled}}_{1,\nu}(N)}\,,
$$
where $\cS^{\textit{labeled}}_{1,\nu}(N)$ and
$\cS^{\textit{labeled}}_{1,1,\nu}(N)$ are as in
Theorems~\ref{th:c1:in:genus:0} and~\ref{th:c11:in:genus:0}
respectively. By~\eqref{eq:c1:N:d} and~\eqref{eq:c11:Q:nu},
for any partition $\nu$ the latter quantities have order
$N^d$, where $d=\dim_{\C}\cQ(\nu,-1^p)$. Among all partitions
$\nu$ satisfying $|\nu|=p-4$, there is only one partition, namely $[1^{p-4}]$,
which defines the stratum $\cQ(1^{p-4},-1^p)$ of maximal dimension
$2p-6$. Hence, the contributions of other strata to both sums are
negligible with respect to the contributions of the
principal stratum. This implies that
$$
\lim_{N\to+\infty} \prob_{\mathit{connected}}(p; N)
=\lim_{N\to+\infty}
\frac{\cS^{\textit{labeled}}_{1,1,[1^{p-4}]}(N)}
{\cS^{\textit{labeled}}_{1,[1^{p-4}]}(N)}\,.
$$
Using expressions~\eqref{eq:c1:N:d} and~\eqref{eq:c11:Q:nu}
for the denominator and the numerator of the latter
fraction respectively and
applying~\eqref{eq:c11:as:c1:squared:over:Vol} we get
$$
\lim_{N\to+\infty}
\prob_{\mathit{connected}}(p; N)
=\frac{\cyl_1(\cQ(1^{p-4},-1^p))}
{\Vol_1 \cQ_1(1^{p-4},-1^p)}\,.
$$
The latter ratio was denoted by
$\prob_1(\cQ(1^{p-4},-1^p))$ and computed in
Theorem~\ref{th:trivalent:trees:connected:proportion}.
\end{proof}

%######################################################################
%######################################################################
%######################################################################

\section{Enumeration of square-tiled surfaces}
\label{s:Equidistribution}

In the current Section we prove Theorem~\ref{th:c11:in:genus:0} from the
Introduction and in particular the key equality~\eqref{eq:c11:as:c1:squared:over:Vol}.
This result is actually a particular case of the more general
Corollary~\ref{cor:density:fixed:num:comp} that concerns surfaces of any genera.

In plain terms we prove that the asymptotic proportion of
square-tiled surface having a single vertical band of
squares among square-tiled surface having a single
horizontal band of squares is the same as the asymptotic
proportion of square-tiled surface having a single vertical
band of squares among all square-tiled squares (when the
bound for the number of squares in the square-tiling
grows). This equality can be seen as a particular case of
asymptotic non-correlation of horizontal and vertical
decompositions of square-tiled surfaces tiled with large
number of squares. Together with the explicit
formula~\eqref{eq:volume} for the Masur--Veech volume
$\Vol_1 \cQ_1(\nu,-1^{|\nu|+4})$ of any stratum in the
moduli space of quadratic differentials in genus zero, the
equality~\eqref{eq:c11:as:c1:squared:over:Vol} is the
principal ingredient of the proof of
Theorem~\ref{th:any:trees:connected:proportion}, providing
the asymptotic frequency of meanders out of general
couplings of arc systems on a pair of hemispheres.

Our non-correlation results are, actually, much more
general in two aspects. On the one hand, they are
applicable to much more general horizontal and vertical
cylinder decompositions of square-tiled surfaces compared
to ``single band of squares'' needed for the count of
meanders, see Corollaries~\ref{cor:fixed:horiz:vert:decompositions},
and~\ref{cor:density:fixed:num:comp}
as well as Remark~\ref{rm:other:combinations}.
On the other hand, these non-correlation results
are applicable to general closed connected $\GLR$-invariant
suborbifolds defined over $\Q$ in any genus (and not only
to strata of quadratic differentials in genus zero needed
for meanders).

%--------------------------------------------------------------------
\subsection{Invariant arithmetic orbifolds}
\label{ss:background:flat:surf}
%
% For general background on strata of Abelian and quadratic
% differentials we refer the reader to~\cite{Zorich:flat:surfaces}.

Given a generalized partition $\kappa$ composed of
non-negative integers $\kappa = [0^{\kappa_0} 1^{\kappa_1}
\ldots]$ satisfying $\sum_{i \geq 0} i \kappa_i = 2g-2$, we
denote by $\cH(\kappa)$ the moduli space of Abelian
differentials with $\kappa_i$ zeros of order $i$,
for $i=0,1,\dots$,
on complex curves of genus $g$. Namely,
an element of $\cH(\kappa)$ is a tuple
$(X, \omega, \{P_{i,j}\}_{i \geq 0, 1 \leq j \leq
\kappa_i})$ where $X$ is a smooth complex curve of genus $g$
endowed with a non-zero holomorphic $1$-form $\omega$
having zero of order $i$ at each of the points
$P_{i,1},\dots,P_{i,\kappa_i}$ of $X$ (for those $i$ for
which $\kappa_i>0$) and non-vanishing outside of the finite
set $\Sigma := \{P_{i,j}\}$. Here ``zero of order zero'' is
interpreted as a marked point. Each stratum of Abelian
differential is locally modeled on the relative cohomology
$H^1(X, \Sigma; \C)$.

A \textit{translation chart} for an Abelian differential
$\omega$ is a local coordinate $z$ on $X$ in which $\omega
= dz$. A translation chart defines a flat metric with the
area form $\frac{i}{2}dz\wedge d\bar{z}=dx\wedge dy$. The
group $\GLR$ acts on $\cH(\kappa)$ by postcomposition in
translation charts. By definition
\[
\forall A \in \GLR,
\quad
\Area(A \cdot (X,\omega,\Sigma)) = \det(A)\cdot \Area((X,\omega,\Sigma))\,.
\]
In particular, the subgroup $\SLR$ preserves the ``unit hyperboloid''
$\cH_1(\kappa) \subset \cH(\kappa)$ that consists of Abelian differentials of area one.

\begin{Definition}
A \emph{square-tiled surface} in a stratum of Abelian
differential $\cH(\kappa)$ is an Abelian differential $(X,
\omega, \Sigma)$ such that all periods of $\omega$
relative to $\Sigma$ belong to $\Z \oplus i \Z$.
\end{Definition}

Square-tiled surfaces correspond to integral points $H^1(X,
\Sigma; \Z \oplus i \Z)$ in period coordinates $H^1(X,
\Sigma; \C)$. The action of the subgroup $\SLZ \subset
\SLR$ preserves the set of square-tiled surfaces.

Alternatively, a square-tiled surface can be defined as a
connected finite ramified cover over the square torus $\C /
(\Z \oplus i \Z)$ with all ramification points (if any)
located over the same branch point of the torus.
Thus, in plain terms, a square-tiled translation surface is
obtained by gluing a finite set of unit squares endowed
with distinguished horizontal and vertical directions by
translation respecting these distinguished directions
and identifying sides to sides and vertices to vertices.

The fundamental result of Eskin, Mirzakhani and
Mohammadi generalizes to closed $\SLR$-invariant subsets
all the structures we described for strata.

\begin{NNTheorem}[\cite{Eskin:Mirzakhani}, \cite{Eskin:Mirzakhani:Mohammadi}]
\label{thm:EMM}
Let $\operatorname{P}\subset\SLR$ be the subgroup of
upper-triangular matrices. For any $S=(X,\omega)$ in any
stratum $\cH(\kappa)$ of Abelian differentials the closures
$\overline{P\cdot S}$ and $\overline{\SLR\cdot S}$ of
orbits of $S$ under the actions of $\operatorname{P}$ and of $\SLR$ coincide.

The orbit closure $\cL=\overline{\GLR\cdot S}$ is locally
described in period coordinates as a finite union of
complexifications of linear subspaces in $H^1(X, \Sigma;
\R)$. Any such $\cL$ admits a volume element which is
linear in the corresponding linear subspaces of
$H^1(X, \Sigma;\R)$.

Any ergodic $\operatorname{P}$-invariant probability
measure on the ``unit hyperboloid'' $\cH_1(\kappa)$ is
always $\SLR$-invariant. It is supported on the
intersection $\cH_1(\kappa)$ with some orbit closure
$\cL=\overline{\GLR\cdot S}$. It can be induced from an
appropriately normalized linear volume element in period
coordinates of $\cL$ by the natural restriction to the
level hypersurface $\cL_1=\cL\cap\cH_1(\kappa)$ of the area
function.

Reciprocally, for any orbit closure
$\cL=\overline{\GLR\cdot S}$, the measure corresponding to
the volume element on $\cL_1=\cL\cap\cH_1(\kappa)$ induced
from any linear volume element in period
coordinates of $\cL$ by the natural restriction to the
level hypersurface $\cL_1$ is ergodic and finite.
\end{NNTheorem}

In the current paper we will only consider the case when
all these linear subspaces are defined by systems of linear
equations with \textit{rational} coefficients with respect
to the natural integral structure on $H^1(X, \Sigma;
\Z)\subset H^1(X, \Sigma; \R)$. In such situation we say
that $\cL$ is \emph{defined over $\Q$} or equivalently that
it is \emph{arithmetic} (see~\cite{Wright:field:of:def} for
the more general notion of the \textit{field of definition}
of a $\GLR$-invariant suborbifold).

The following weakened version of results of A. Wright (Theorem~1.9
in~\cite{Wright:cylinder:deformations}) provides efficient
way to recognize affine invariants suborbifold $\cL$
by comparing circumferences of horizontal cylinders
of any translation surface in $\cL$ having at least
one horizontal cylinder.

\begin{NNTheorem}[\cite{Wright:cylinder:deformations}]
Suppose that $S$ is a translation surface in a closed
connected $\GLR$-invariant
submanifold $\cL$ and suppose that $S$ has at least
one horizontal cylinder. ($S$ need not be horizontally
periodic.) Let $w_1,\dots,w_n$ be the set of circumferences
of horizontal cylinders on $S$.

If $n=1$, then $\cL$ is defined over $\Q$. If $n>1$ but
$w_j/w_1\in\Q$ for all $j=2,\dots,n$, then $\cL$ is defined
over $\Q$.
\end{NNTheorem}

In particular, any connected $\GLR$-invariant suborbifold
containing at least one square-tiled surface is defined
over $\Q$.

\begin{Definition}
By an \textit{invariant arithmetic orbifold}
we call a closed connected $\GLR$-invariant
suborbifold defined over $\Q$
in a stratum of Abelian differentials.
\end{Definition}
Note that a general invariant arithmetic orbifold
might have self-intersections, see Example~2.8
in~\cite{Mirzakhani:Wright:boundary}
and~Section 2.1 in~\cite{Lanneau:Nguyen:Wright}.

Any connected component of any stratum of Abelian
differentials is an invariant arithmetic orbifold. The
$\GLR$-orbit of any square-tiled surface is also an
invariant arithmetic orbifold. Another example, which is
of great importance for us, is provided by connected
components of strata of meromorphic quadratic differentials
with at most simple poles, see Section~\ref{ss:quad:as:invariant:orbifold}.

%------------------------------------------------------------
\subsection{Masur--Veech volume element}
\label{ss:Masur:Veech:volume:element}
By definition of the invariant arithmetic orbifold $\cL$,
the intersection of any linear subspace $L$ representing
$\cL$ in period coordinates $H^1(S,\Sigma ;\C)$ with the
lattice $H^1(S,\Sigma ;\Z\oplus i\Z)$ defines a lattice in
$L$. Taking the linear volume element in $L$ normalized in
such way that a fundamental domain of the resulting lattice
in $L$ is equal to one, we get canonically defined
\textit{Masur--Veech volume element} $d\!\Vol$ on $\cL$.

A volume element on a manifold canonically defines
a volume element on any level hypersurface of any smooth function.
Thus, the Masur--Veech volume element on an invariant
arithmetic orbifold $\cL$ of a stratum $\cH(\kappa)$
of Abelian differentials induces the canonical
volume element $d\!\Vol_1$ on the ``unit hyperboloid''
$\cL_1:=\cL\cap\cH_1(\kappa)$ of Abelian differentials of
area one in $\cL$.

Alternatively the volume element $d\!\Vol_1$ (also called the
\textit{Masur--Veech volume element}) on $\cL_1$ can be
defined as follows. Given any subset $U\subset\cL$ we
define the cones over $U$ as
\begin{align}
\label{eq:cone:R}
C_R U&:=\{(X,r\cdot\omega)\,:\,(X,\omega)\in U,\ 0<r\le R\}\,,
\\
\label{eq:cone:infty}
C_\infty U&:=\{(X,r\cdot\omega)\,:\,(X,\omega)\in U,\ 0<r\}\,.
\end{align}
Note that $\Area(X,r\cdot\omega)=r^2\Area(X,\omega)$ for
$r>0$: the flat area of $(X,r\cdot\omega)$ changes
quadratically in $r$. In particular, our definition implies
that
\begin{equation}
\label{eq:C:R:L:1}
C_R\cL_1 =\{(X,\omega)\in\cL\ :\ \Area(X,\omega)\le R^2\}\,.
\end{equation}

By definition, the \textit{Masur-Veech volume} $\Vol_1(V_1)$
of a subset $V_1\subset\cL_1$ of the unit hyperboloid
$\cL_1\subset\cL$ is defined as the Masur--Veech volume of
the ``unit cone'' over $V_1$ normalized by the dimensional
factor $2d$:
\begin{equation}
\label{eq:normalization:masur:veech}
\Vol_1 V_1 = \Vol_1(V_1):=2d\cdot \Vol(C_1 V_1)\,,
\quad\text{where }d=\dim_\C\cL\,.
\end{equation}
It is immediate to see that the two definitions of the
Masur-Veech volume element $d\!\Vol_1$ on an invariant
arithmetic suborbifold $\cL_1$ are equivalent.

By construction the Masur--Veech volume element thus
defined on the unit hyperboloid $\cL_1$ of any invariant
arithmetic suborbifold $\cL$ is $\SLR$-invariant. By
the fundamental result of Eskin, Mirzakhani,
Mohammadi (see page~\pageref{thm:EMM}), for any invariant
arithmetic suborbifold $\cL$, the
measure on $\cL_1$ represented by Masur--Veech volume
element on $\cL_1$ is a finite ergodic measure. In
particular, the total Masur--Veech volume $\Vol_1\cL_1$ of
any invariant arithmetic suborbifold $\cL$ is finite.
(Finiteness of Masur--Veech volumes of the unit
hyperboloids of strata of Abelian and quadratic
differentials was proved much earlier by
H.~Masur~\cite{Masur:82} and
W.~Veech~\cite{Veech:Gauss:measures}.)

Formally speaking, the Masur--Veech volume of any invariant
arithmetic suborbifold $\cL=C\cL_1$ is obviously infinite.
By the Masur--Veech volume of an invariant arithmetic orbifold,
we always mean $\Vol_1 \cL_1$, the volume of the unit hyperboloid.

Given an invariant arithmetic suborbifold $\cL$ in the
ambient stratum $\cH(\kappa)$ of Abelian differentials,
denote by $\cL_\Z$ the set of all square-tiled surfaces in
$\cL$ tiled with unit squares. Equivalently, $\cL_\Z$ is
the set of points of $\cL$ represented in period
coordinates of the ambient stratum $\cH(\kappa)$ by points
of the lattice $H^1(S,\Sigma ,\Z\oplus i\Z)$. It follows
from the definition~\eqref{eq:normalization:masur:veech} of
$\Vol_1\cL_1$ that
\begin{align}
\label{eq:Vol:L}
\Vol_1\cL_1:&=2d\cdot \Vol(C_1 \cL_1)
=2d\cdot\lim_{R\to+\infty}
\frac{\Card(C_R\cL_1\cap\cL_\Z)}{R^{2d}}
=\\
\notag
&= 2d\cdot
\lim_{N\to+\infty}
\frac{
\begin{pmatrix}
\text{number of square-tiled surfaces in $\cL$}
\\
\text{tiled with at most $N$ identical squares}
\end{pmatrix}
}{N^{d}}\,,
\end{align}
where $d=\dim_{\C}\cL$.

%------------------------------------------------------------
\subsection{Strata of quadratic differentials as invariant arithmetic orbifolds}
\label{ss:quad:as:invariant:orbifold}
In this Section we describe strata of meromorphic quadratic
differentials and their Masur--Veech volumes. We then
explain how this description fits the general setting of
invariant arithmetic orbifolds and their Masur--Veech
volumes discussed in
sections~\ref{ss:background:flat:surf}--\ref{ss:Masur:Veech:volume:element}.
The details of constructions are deferred to
Appendix~\ref{a:Lattices}.

Similarly to the case of strata $\cH(\kappa)$ in the moduli
spaces of Abelian differentials introduced in
Section~\ref{ss:background:flat:surf} one can define strata
$\cQ(\xi)$ in the moduli spaces of meromorphic quadratic differentials
with at most simple poles. The strata are parameterized by
generalized partitions $\xi = [-1^{\xi_{-1}} 0^{\xi_0}
1^{\xi_1} \ldots]$ satisfying the condition $\sum_{i \geq
-1} i \cdot \xi_i = 4g - 4$. An element in
$\cQ(\xi)$ is an equivalence class of tuples $(X, q,
\{P_{i,j}\}_{i \geq -1, 1 \leq j \leq \xi_i})$, where $X$ is
a smooth complex curve of genus $g$, $q$ is a meromorphic
quadratic differential on $X$ having simple poles at the
points $P_{-1,j}$ of $X$, marked points at the points
$P_{0,j}$ of $X$, and zeros of order $i$ at the points
$P_{i,j}$ of $X$ for $i \geq 1$, and $q$ is not
the square of an Abelian differential on $X$. The
strata $\cQ(\nu, -1^{|\nu|+4})$ considered in
the Introduction correspond to the case of genus $0$.

By convention, $(X, q, \{P_{i,j}\})$ (which will be
sometimes denoted by $(X, q)$ for brevity) defines an
\textit{integer point} in $\cQ(\xi)$ if and only if it can
be represented by a square-tiled surface tiled by identical
squares of size $\frac{1}{2} \times \frac{1}{2}$ such that
all points $P_{i,j}$ are located at the corners of the
squares. This convention defines a volume form on
$\cQ(\xi)$ and an induced volume form on $\cQ_1(\xi)$. The
resulting volume $\Vol_1\cQ_1(\xi)$ can be expressed as
follows:
\begin{align}
\label{eq:easy:normalization:Vol:cQxi}
\Vol_1\cQ_1(\xi)
= 2d\cdot
&\lim_{N\to+\infty}
\frac{
\begin{pmatrix}
\text{number of square-tiled surfaces in $\cQ(\xi)$}
\\
\text{tiled with at most $2N$ identical squares}
\end{pmatrix}
}{N^{d}}
\\
\notag
= 2d\cdot 2^d\cdot
&\lim_{N\to+\infty}
\frac{
\begin{pmatrix}
\text{number of square-tiled surfaces in $\cQ(\xi)$}
\\
\text{tiled with at most $N$ identical squares}
\end{pmatrix}
}{N^{d}}\,,
\end{align}
where $d=\dim_{\C}\cQ(\xi)$.
This convention for the normalization of the volume form
on $\cQ(\xi)$ follows the one in~\cite{AEZ:Dedicata, AEZ:genus:0}
and~\cite{Goujard:carea, Goujard:volumes}.

Given a quadratic differential $q$ on $X$ that is not the
square of an Abelian differential, there is a unique
(possibly ramified) connected double cover $p: \hat X \to
X$ such that the pull-back $p^* q$ is the square of an
Abelian differential $p^*q = \omega^2$ globally defined on
$\hat X$.
The branch points of $p$ are the poles and the
zeros of odd order of $q$. By convention, throughout
Section~\ref{s:Equidistribution} we always mark preimages
of simple poles of $q$ on $\hat X$, alternative convention
would be considered in the Appendix.

Note that for any $q$ in the same connected component
$\cQ(\xi)^{\mathit{comp}}$ of any stratum of meromorphic quadratic
differentials with at most simple poles, the resulting
Abelian differential $\omega$ belongs to the same connected
component $\cH(\kappa)^{\mathit{comp}}$ of the corresponding stratum
of Abelian differentials. However, the correspondence
$(X,q)\to(\hat X,\omega)$ does not define a map from
$\cQ(\xi)^{\mathit{comp}}$ to $\cH(\kappa)$ for there is an
ambiguity in labeling zeros of $\omega$ as soon as $q$ has
zeroes of even order. Considering all possible ways to
label the zeroes of $\omega=p^\ast q$ for all $(X,
q)\in\cQ(\xi)^{\mathit{comp}}$ we obtain a suborbifold
$\widehat{\cQ}(\xi)^{\mathit{comp}}\subset\cH(\kappa)^{\mathit{comp}}$ in
appropriate component of appropriate stratum of Abelian
differentials together with a covering map $P:
\widehat{\cQ}(\xi)^{\mathit{comp}} \to \cQ(\xi)^{\mathit{comp}}$. By
construction, the degree of the cover $P$ is $\deg(P)=
2^{\ell(\xi_{even})}$, where
$\ell(\xi_{even})=\xi_0+\xi_2+\xi_4+\dots$ is the total
number of marked points and of zeroes of even degrees.

The question of connectedness of the cover
$\widehat{\cQ}(\xi)^{\mathit{comp}}$ would be discussed in full
generality elsewhere. In the hypothetical situation when
$\widehat{\cQ}(\xi)^{\mathit{comp}}$ contains several connected
components, all components are pairwise isomorphic and all
components form invariant arithmetic orbifolds. From now
on, we choose any of these invariant  arithmetic orbifolds
$\cL$.

The following result relates geometry of a connected
component $\cQ(\xi)^{\mathit{comp}}$ of a stratum of meromorphic
quadratic differentials with at most simple poles and
geometry of the induced invariant arithmetic orbifold
$\cL$.

\begin{Proposition}
\label{prop:volumes:quad:and:double:cover}
Let $\cQ(\xi)^{\mathit{comp}}$ be a connected component of a stratum
of meromorphic quadratic differentials with at most simple
poles. Let $\cL$ be the invariant arithmetic orbifold
obtained by the canonical double cover (with marked
preimages of simple poles, if any). Let $P:\cL \to
\cQ(\xi)^{\mathit{comp}}$ be the natural cover. Then
\begin{enumerate}
\item
Any square-tiled surface $(\hat X, \omega)$
in $\cL$ always has even number of squares.
\item
The natural involution $\hat X\to \hat X$ interchanging
the sheets of the double cover $p:\hat X\to X$ sends
squares of the tiling to squares of the tiling and does
not map any square of the tiling to itself. In
particular, the image under the map $P$ of any
square-tiled surface $(\hat X,\omega)$ in $\cL$ tiled
with $2N$ identical squares is a square-tiled surface
$(X,q)$ in $\cQ(\xi)^{\mathit{comp}}$ tiled with $N$ identical
squares of the same size as the initial ones.
\item
The Masur-Veech volume element $d\!\Vol^{\cQ(\xi)}$ on
$\cQ(\xi)^{\mathit{comp}}$ corresponding to
normalization~\eqref{eq:easy:normalization:Vol:cQxi} and
the Masur-Veech volume element $d\!\Vol^{\cL}$ on $\cL$
corresponding to normalization~\eqref{eq:Vol:L} are
pointwise proportional:
\[
P^*(d\!\Vol^{\cQ(\xi)}) = 4^d \cdot d\!\Vol^{\cL}\,,
\]
where $d=\dim_{\C}\cQ(\xi)$. In particular,
\begin{equation}
\label{eq:Vol:cL:Vol:cQ}
\Vol_1^{\cL}(\cL)
=\frac{\deg(P)}{4^d}\cdot
\Vol_1^{\cQ(\xi)}(\cQ_1(\xi)^{\mathit{comp}})\,.
\end{equation}
\item
If furthermore $\cQ(\xi) = \cQ(\nu, -1^{|\nu|+4})$ is a
stratum in genus $0$, then the cover
$\widehat{\cQ}(\nu,-1^{|\nu|+4})=\cL$ is connected and
$\deg(P)= 2^{\ell(\xi_{even})}$, where
$\ell(\xi_{even})=\xi_0+\xi_2+\xi_4+\dots$ is the total
number of marked points and of zeroes of even degrees.

\end{enumerate}
\end{Proposition}

\begin{proof}
We postpone the proof of
Proposition~\ref{prop:volumes:quad:and:double:cover} to
Appendix~\ref{a:Lattices} with exception for the
computation of proportionality coefficients which is
performed below.

The factor $4^d$ relating the volume elements
$P^*(d\!\Vol^{\cQ(\xi)})$ and $d\!\Vol^{\cL}$ comes from
the fact that by convention, square-tiled surfaces in
$\cQ(\xi)$ are tiled with squares of size $\frac{1}{2}
\times \frac{1}{2}$ while square-tiled surfaces in $\cL$
are tiled with \textit{unit} squares.

The cover $P: \widehat{\cQ}(\xi)^{\mathit{comp}} \to
\cQ(\xi)^{\mathit{comp}}$ associates
$\deg(P)=2^{\ell(\xi_{\mathit{even}})}$ square-tiled
surfaces in $\widehat{\cQ}(\xi)^{\mathit{comp}}$ each tiled with
$2N$ identical squares to every square-tiled surface tiled
with $N$ identical squares in $\cQ(\xi)^{\mathit{comp}}$, and,
reciprocally, any square-tiled surface in
$\widehat{\cQ}(\xi)^{\mathit{comp}}$ projects to a square-tiled
surface in $\cQ(\xi)^{\mathit{comp}}$. Comparing
definitions~\eqref{eq:Vol:L}
and~\eqref{eq:easy:normalization:Vol:cQxi} we
get~\eqref{eq:Vol:cL:Vol:cQ}.
\end{proof}

The proposition above allows us to treat any connected components of
a stratum in the moduli space of meromorphic quadratic differentials
with at most simple poles as a particular case of an invariant arithmetic orbifold.

%######################################################################
%######################################################################
%######################################################################
\subsection{Densities and uniform densities}
\label{ss:densities}
Given an invariant arithmetic orbifold $\cL$, we denote by
$\cL_\Z$ the set of square-tiled surfaces in $\cL$. When
the size of the square is not explicitly specified we
always assume that square-tiled surfaces are tiled with
\textit{unit} squares. In particular, $\cL_\Z$ is the set
of ``unit-square-tiled'' surfaces, where we impose all
standard assumptions on the tiling. Given a subset $\cD_\Z$
of $\cL_\Z$ and a subset $V$ of $\cL$ define the following
counting function
\begin{equation}
\label{eq:definition:counting:function}
\cN_{\cD_\Z}(V, N) := \Card \{V\cap\cD_\Z:\ \Area(S) \leq N\}
=\Card \{V\cap\cD_\Z\cap C_{\sqrt{N}}\cL_1\}
\,.
\end{equation}
Recall from Section~\ref{ss:background:flat:surf} that the
Masur--Veech volume $\Vol_1(\cL_1)$ is finite and is defined
as the leading term of the asymptotic of $\cN_{\cL_\Z}(\cL,N)$
as $N\to+\infty$ normalized by a dimensional factor as
\begin{equation}
\label{eq:N:L:Z:N}
\cN_{\cL_\Z}(\cL, N) = \frac{1}{2d}\cdot\Vol_1(\cL_1) \cdot N^d
+ o(N^d)\quad \text{as } N\to+\infty\,,
\end{equation}
where $d= \dim_\C \cL$ (see~\eqref{eq:Vol:L}).

\begin{Definition}
\label{def:density}
Let $\cL \subset \cH(\kappa)$ be an invariant arithmetic
orbifold. We say that a subset $\cD_\Z\subset\cL_\Z$
\textit{has a density}, or, equivalently, \textit{is a
density subset}, if the following limit exists:
\begin{equation}
\label{eq:def:delta}
\delta(\cD_\Z) := \lim_{N\to+\infty} \frac{\cN_{\cD_\Z}(\cL, N)}{\cN_{\cL_\Z}(\cL, N)}\,.
\end{equation}
The value $\delta(\cD_\Z)$ of the limit is called the
\textit{density} of the subset $\cD_\Z$.
\end{Definition}

As two extreme examples we have $\delta(\cL_\Z) = 1$ and
$\delta(\emptyset) = 0$. As a one more example, one can
consider a sublattice of $\cL_\Z$ in the period coordinates
(see Section~\ref{ss:background:flat:surf}). The density
of such sublattice is $\frac{1}{K}$ where $K$ is the index
in $\cL_\Z$ of the sublattice.

Note that a subset $\cD_\Z$ of $\cL_\Z$ has a density
$\delta(\cD_\Z)$ if and only if
\begin{equation}
\label{equiv:density}
\cN_{\cD_\Z}(\cL, N) \sim
\delta(\cD_\Z)\cdot\frac{\Vol_1(\cL_1)}{2d} \cdot N^d\,,
\end{equation}
where equivalence is understood in the sense of
equation~\eqref{eq:N:L:Z:N}.
It is convenient to introduce the following quantity
\begin{equation}
\label{eq:def:c}
c(\cD_\Z) := \delta(\cD_\Z)\,\cdot\,\Vol_1(\cL_1)\,.
\end{equation}

We now introduce finer counting functions related to
equidistribution. Recall that a \emph{cone} in $\cL$ is a
subset of $\cL$ preserved by dilation $S \mapsto \lambda
S$, see~\eqref{eq:cone:infty}.
\begin{Definition}
\label{def:uniform:density}
Let $\cL \subset \cH(\kappa)$ be an invariant arithmetic
orbifold. We say that a subset $\cD_\Z\subset\cL_\Z$
\textit{has a uniform density} if for any open cone $C$ such
that $\Vol_1 (\partial C \cap \cL_1) = 0$ we have
\begin{equation}
\label{eq:prop}
\lim_{N\to+\infty} \frac{\cN_{\cD_\Z}(C, N)}{\cN_{\cL_\Z}(C, N)}
\,=\,
\delta(\cD_\Z).
\end{equation}
\end{Definition}
Taking $C = \cL$ we see that a uniform density is a
density. The set $\cL_\Z$ has uniform density. Indeed,
it follows from zero measure boundary condition
$\Vol_1 (\partial C \cap \cL_1) = 0$ that the measure of
$C \cap \cL_1$ coincides with the leading coefficient
in the integer point count~\eqref{eq:prop}. See also
Remark~\ref{rk:jordan:measurable:riemann:integrable}.

The Proposition below suggests several equivalent
definitions of uniform density. In particular, it shows
that uniformity of density of $\cD_\Z$ is equivalent to
equidistribution of $\cD_\Z$ in $\cL$, see
property~\eqref{label:counting2:prop:uniform:density}, and
that uniformity of density of $\cD_\Z$ in $\cL$ is
equivalent to uniformity of the projection of $\cD_\Z$ to
$\cL_1$, see
property~\eqref{label:counting:prop:uniform:density:in:L1}.
\begin{Proposition}
\label{prop:uniform:density}
Let $\cL \subset \cH(\kappa)$ be an invariant arithmetic
orbifold in some stratum of Abelian differentials.
   % closed linear $\GLR$-invariant suborbifold defined over $\Q$,
Let $\cD_\Z$ be a subset of the set $\cL_\Z$ of square-tiled
surfaces in $\cL$. Then the following assertions are
equivalent
\begin{enumerate}
\item
\label{label:counting1:prop:uniform:density}
$\cD_\Z$ has uniform density.
\item
\label{label:counting:prop:uniform:density:in:L1}
For any open set $V_1 \subset \cL_1$ with $\Vol_1(\partial V_1) = 0$
one has
\[
\lim_{\epsilon \to 0^+} \epsilon^{2d}
\Card \{S \in \cD_\Z:\ \epsilon S \in C_1 V_1\}
\,=\,
\frac{\delta(\cD_\Z)}{2d}\,\cdot\,\Vol_1(V_1)\,.
\]
\item
\label{label:integral1:prop:uniform:density}
For any bounded non-negative continuous function
$f: \cL_1 \to \R_+$ one has
\[
\lim_{N \to +\infty} \frac{1}{N^d}
\sum_{\substack{S \in \cD_\Z\\ \Area(S) \leq N}}
f \left( \frac{S}{\sqrt{\Area(S)}} \right)
\,=\,
\frac{\delta(\cD_\Z)}{2d}\,\cdot\,\int_{\cL_1} f\, d\!\Vol_1.
\]
\item
\label{label:counting2:prop:uniform:density}
For any open relatively compact set $V \subset \cL$ with
$\Vol(\partial V) = 0$ one has
\[
\lim_{\epsilon \to 0^+} \epsilon^{2d} \Card \{S \in \cD_\Z:\ \epsilon S \in V\}
\,=\,
\delta(\cD_\Z)\,\cdot\,\Vol(V).
\]
\item
\label{label:integral2:prop:uniform:density}
For any compactly supported non-negative continuous function $f: \cL \to \R_+$
one has
\[
\lim_{\epsilon \to 0^+} \epsilon^{2d} \sum_{S \in \cD_\Z} f(\epsilon S)
\,=\,
\delta(\cD_\Z)\, \int_\cL f\, d\!\Vol\,.
\]
\end{enumerate}
\end{Proposition}

\begin{Remark}
\label{rk:jordan:measurable:riemann:integrable}
One of the equivalent ways to define the Riemann integral consists in
rescaling the mesh of integral points to approximate measure of the sets or integrals of functions. As a consequence, the
above proposition remains true if one replaces a continuous function by a
Riemann integrable function in assertions~\ref{label:integral1:prop:uniform:density}
and~\ref{label:integral2:prop:uniform:density}. Similarly, open sets with zero
measure boundaries can be replaced by Jordan measurable sets in
assertions~\ref{label:counting:prop:uniform:density:in:L1}
and~\ref{label:counting2:prop:uniform:density}. Recall that a set $V$ is called Jordan
measurable if it is measurable and $\Vol(\mathring{V}) = \Vol(\overline{V})$.
It is a standard fact in integration that a set is Jordan measurable if and only
if the characteristic function of this set is Riemann integrable.
\end{Remark}

\begin{proof}
Any open cone $C\subset\cL$ can be realized as
$C=C_\infty V_1$ for $V_1=C\cap\cL_1$, see
definition~\eqref{eq:cone:infty}, and conversely,
any open set $V_1\subset\cL_1$ is the intersection
of the open cone $C_\infty V_1$ with $\cL_1$.
Thus, similarly
to~\eqref{equiv:density}, a subset $\cD_\Z$ of $\cL_\Z$ has
a \textit{uniform} density $\delta(\cD_\Z)$ if and only if for every
open set $V_1\subset \cL_1$ one has
$$
\cN_{\cD_\Z}(C_\infty V_1, N) \sim
\delta(\cD_\Z)\cdot\frac{\Vol_1(V_1)}{2d} \cdot N^d\,.
$$
Having a subset $V \subset \cL$ define the quantity
$\widetilde{\cN}_{\cD_\Z}(V, \epsilon) := \Card \{S \in \cD_\Z:\ \epsilon S \in V\}$.
We can rewrite the left-hand side of the above
asymptotic relation as
\begin{multline*}
\cN_{\cD_\Z}(C_\infty V_1, N) =
\Card\{S\in \cD_\Z\cap C_{\sqrt{N}} V_1\}
=\\=
\Card\left\{S\in \frac{1}{\sqrt{N}} \cD_\Z \cap C_1 V_1\right\}=
\widetilde\cN_{\cD_\Z}\left(C_1 V_1, \frac{1}{\sqrt N}\right)\,.
\end{multline*}
We conclude that the assertion~\eqref{label:counting1:prop:uniform:density}
is equivalent to the following assertion:
$$
\lim_{N\to+\infty}
\left(\tfrac{1}{\sqrt{N}}\right)^{2d}
\Card \{S \in \cD_\Z:\ \left(\tfrac{1}{\sqrt{N}}\right)\cdot S \in C_1 V_1\}
\,=\,
\frac{\delta(\cD_\Z)}{2d}\,\cdot\,\Vol_1(V_1)\,.
$$
It is clear that existence of the above limit for the
discrete parameter $\epsilon=\frac{1}{N}$ with $N\in\N$ and
for the continuous parameter $\epsilon\in\R_+$ are
equivalent. This proves that the first two assertions are
equivalent,
\eqref{label:counting1:prop:uniform:density}$\Leftrightarrow$\eqref{label:counting:prop:uniform:density:in:L1}.

By definition~\eqref{eq:normalization:masur:veech} of the
Masur--Veech volume we have
$\frac{1}{2d}\Vol_1(V_1)=\Vol(C_1 V_1)$. This implies that
assertion~\eqref{label:counting:prop:uniform:density:in:L1} is a
particular case of
assertion~\eqref{label:counting2:prop:uniform:density} when
$V= C_1 V_1$, so
\eqref{label:counting:prop:uniform:density:in:L1}
$\Leftarrow$\eqref{label:counting2:prop:uniform:density}.

\begin{figure}[htb]

\includegraphics{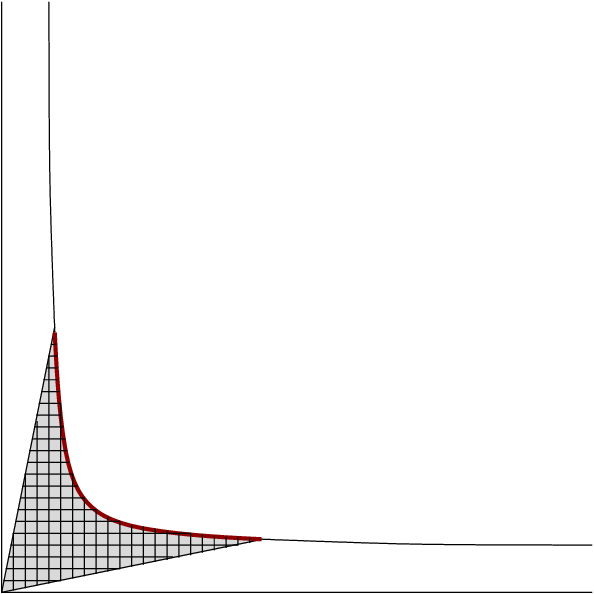}

\includegraphics{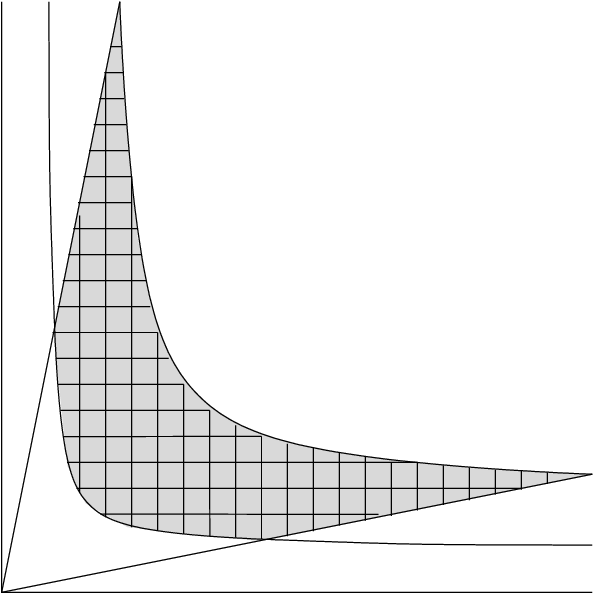}

\vspace{70pt}

\caption{
\label{fig:trapezoid}
Cone based on the ``unit hyperboloid'' on the left
and part of the cone confined between two ``hyperboloids''
on the right.
}
\end{figure}

We now prove the converse implication
\eqref{label:counting:prop:uniform:density:in:L1}
$\Rightarrow$\eqref{label:counting2:prop:uniform:density}.
Assertion~\eqref{label:counting:prop:uniform:density:in:L1}
implies
assertion~\eqref{label:counting2:prop:uniform:density} when
$V$ has the form of a cone $V = C_1 V_1$ based on an open
subset of the unit hyperboloid $\cL_1$ (see the left
picture in Figure~\ref{fig:trapezoid}). By homogeneity,
this implies
assertion~\eqref{label:counting2:prop:uniform:density} for
any cone $C_r V_1$ with any $r>0$. Hence it is also valid
for any complement $C_R V_1- C_r V_1$ for any $R>r>0$ (see
the right picture in Figure~\ref{fig:trapezoid}). Now, for
any open relatively compact set $V$ in $\cL$ and any
$\upsilon>0$ one can find a finite
disjoint collection of trapezoids $V^{(i)} = C_{R_i}
V^{(i)}_1 - C_r V^{(i)}_1$ contained in $V$ and such that
the difference $V^{\mathit{dif}}$ between the union of
trapezoids and $V$ has Masur--Veech measure less than
$\upsilon$. As $\upsilon$ can be chosen arbitrarily small
and as
\begin{multline*}
0\le
\limsup_{\epsilon \to 0^+}
\epsilon^{2d} \Card \{S \in \cD_\Z:\ \epsilon S \in V\}
\,\le\\ \le\,
\lim_{\epsilon \to 0^+}
\epsilon^{2d} \Card \{S \in \cL_\Z:\ \epsilon S \in V\}
=
\Vol(V^{\mathit{dif}})\le\upsilon
\end{multline*}
we conclude that
assertion~\eqref{label:counting:prop:uniform:density:in:L1}
implies
assertion~\eqref{label:counting2:prop:uniform:density}.
We have proved equivalence
\eqref{label:counting:prop:uniform:density:in:L1}
$\Leftrightarrow$\eqref{label:counting2:prop:uniform:density}.

Equivalence of
assertions~\eqref{label:counting:prop:uniform:density:in:L1}
and~\eqref{label:integral1:prop:uniform:density}
(respectively of
assertions~\eqref{label:counting2:prop:uniform:density}
and~\eqref{label:integral2:prop:uniform:density}) follows
directly from the duality of measures seen as linear forms
on continuous functions.
\end{proof}

%--------------------------------------------------------------------
\subsection{Horocyclic invariance}
\label{ss:uniform:density:via:horocyclic:invariance}
Consider the following horocyclic subgroups of the group $\SLZ$:
$$
\UhZ=\left\{\begin{pmatrix} 1&n\\0&1\end{pmatrix}:\ n \in \Z\right\}
\quad \text{and} \quad
\UvZ=\left\{\begin{pmatrix} 1&0\\n&1\end{pmatrix}:\ n \in \Z\right\}.
$$
By definition the $\SLZ$ action preserve the set of
square-tiled surfaces. The aim of this Section is to prove
that for $\UhZ$ or $\UvZ$-invariant sets, a density is
always uniform (see Definitions~\ref{def:density}
and~\ref{def:uniform:density}).

\begin{Theorem}
\label{th:equidistribution}
Let $\cL$ be an invariant arithmetic orbifold and let
$\cD_\Z$ be a density subset of square-tiled surfaces in $\cL$.
If $\cD_\Z$ is invariant under at least one of
$\UhZ$ or $\UvZ$, then $\cD_\Z$ has uniform density.
\end{Theorem}

We start the proof of Theorem~\ref{th:equidistribution} with the
following preparatory Lemma.

\begin{Lemma}
\label{lm:Moore}
Any finite $\SLR$-invariant ergodic measure $\nu_1$ on
any unit hyperboloid of a stratum of Abelian differentials is
ergodic with respect to the actions of the discrete parabolic subgroups
$\UhZ$ and $\UvZ$.
\end{Lemma}

\begin{proof}
Let $\operatorname{G}$  be  a simple  Lie  group,  $\operatorname{H}$
be a closed non-compact subgroup of $\operatorname{G}$ and let
$\operatorname{G}$-action be ergodic with respect to a finite
invariant measure. By a particular case of Moore's  Ergodicity
theorem (Theorem~2.2.15 in~\cite{Zimmer}) the
$\operatorname{H}$-action is also ergodic.

In our case the simple Lie group is $\SLR$ and the closed non-compact
subgroup $\operatorname{H}$ is $\UhZ$.
\end{proof}

\begin{Remark}
Note that in the general statement of Moore's Ergodic Theorem, the
group $\operatorname{G}$ is a finite product of simple Lie groups
with finite center, and the ergodic $\operatorname{G}$-action is
supposed to be \textit{irreducible} (see Theorem~2.2.15 and
Definition~2.2.11 in~\cite{Zimmer}). However, for a \textit{simple}
Lie group $\operatorname{G}$ the requirement of irreducibility of the
action is satisfied automatically; see the remark after
Definition~2.2.11 in~\cite{Zimmer}.
\end{Remark}

\begin{proof}[Proof of Theorem~\ref{th:equidistribution}]
Let $\cL_1$ be the intersection of $\cL$ with the unit
hyperboloid. The measure corresponding to the Masur--Veech
volume element $d\!\Vol_1$ on $\cL_1$ is ergodic with
respect to the $\SLR$-action.

Now, the subset $\cD_\Z$ of $\cL_\Z$ defines a
sequence of discrete measures $\mu^{(N,\cD_\Z)}$ on  $\cL_1$, where $N\in\N$.
Namely, we put Dirac masses to all points represented by square-tiled
surfaces tiled with at most $N$ unit squares which belong to
$\cD_\Z$. Then we project these points from $\cL$ to $\cL_1$ by the
natural projection, and normalize the resulting measure by
$2d \cdot N^{-d}$, where $d=\dim_{\C}\cL$. We will show that for
any bounded non-negative continuous  function $f: \cL_1 \to \R_+$
we have
\begin{equation} \label{eq:f:lim:equi}
\lim_{N \to \infty} \int_{\cL_1} f\, d \mu^{(N,\cD_\Z)}
= \delta(\cD_{\Z}) \cdot \int_{\cL_1} f\, d\! \Vol_1\,.
\end{equation}

Taking \textit{all} square-tiled surfaces in $\cL_\Z$, and
not only those which belong to the subset  $\cD_\Z$, we get
a sequence of measures which we denote by
$\mu^{(N,\cL_\Z)}$ and which weakly converges to our
canonical invariant Masur--Veech measure $\Vol_1$ on
$\cL_1$, see~\eqref{eq:normalization:masur:veech}. Recall
that $\cL_1$ is not compact, so by ``weak convergence'' we
mean that for any bounded non-negative continuous  function
$f: \cL_1 \to \R_+$ we have
$$
\lim_{N \to \infty} \int_{\cL_1} f\, d \mu^{(N,\cL_\Z)}
= \int_{\cL_1} f\, d\! \Vol_1\,.
$$

By definition, for any subset $\cD_\Z$ we have
\begin{equation}
\label{eq:domination}
\mu^{(N,\cD_\Z)} \le \mu^{(N,\cL_\Z)}
\end{equation}
for we take
\textit{only part} of square-tiled surfaces  of area at
most $N$ to  define $\mu^{(N,\cD_\Z)}$ while we take
\textit{all} square-tiled  surface of area at most $N$ to
define $\mu^{(N,\cL_\Z)}$. Since the normalization factor
$2d\cdot N^{-d}$ is the same in both cases, we get the
desired inequality. This implies, in particular, that
for any Jordan measurable subset $K\subset\cL_1$ with
compact closure one has
$$
\limsup_{N\to+\infty} \mu^{(N,\cD_\Z)}(K)\le
\lim_{N\to+\infty} \mu^{(N,\cL_\Z)}(K)=\Vol_1(K)\,.
$$
Therefore any subsequence of $\{ \mu^{(N,\cD_\Z)} \}_{N
\geq 0}$ contains a converging subsequence.

From now on, we fix a measure $\mu_J$ obtained as the weak
limit of some subsequence $\{\mu^{(N_k, \cD_\Z)}\}_{k \geq
0}$. Domination~\eqref{eq:domination} and the fact that
$\mu^{(N,\cL_\Z)}$ converges to the Masur--Veech measure
$\Vol_1$, implies that $\mu_J$ is absolutely continuous
with respect to $\Vol_1$. Moreover, the $\UhZ$-invariance
of $\cD_\Z$ implies that all measures $\mu^{(N_k,\cD_\Z)}$
are $\UhZ$-invariant. Hence, the weak limit $\mu_J$ is also
$\UhZ$-invariant.

By Lemma~\ref{lm:Moore} the Masur--Veech measure $\Vol_1$
is ergodic with respect to the action of $\UhZ$.
Ergodicity of $\Vol_1$ with respect to the action of
$\UhZ$,
invariance of $\mu_J$ under the action of
$\UhZ$, and the fact that $\mu_J$ is absolutely continuous
with respect to $\Vol_1$  all
together imply that the two measures are proportional:
$$
\mu_{J} = k_J\cdot \Vol_1\,.
$$
The coefficient of proportionality $k_J$ can be obtained by
integrating the constant function 1. By definition~\eqref{eq:def:delta}
of density $\delta(\cD_\Z)$ we have
\[
k_J = \lim_{k \to \infty} \frac{\mu^{(N_k,\cD_\Z)}(\cL_1)}{\mu^{(N_k,\cL_\Z)}(\cL_1)}
= \lim_{k \to \infty} \frac{\cN_{\cD_\Z}(\cL, N_k)}{\cN_{\cL_\Z}(\cL, N_k)}
=\delta(\cD_\Z)\,.
\]
for any weak limit $\mu_J$. This
implies~\eqref{eq:f:lim:equi}, which correspond to
assertion~\eqref{label:integral1:prop:uniform:density} in
the list of equivalent definitions of uniform density of
$\cD_\Z$ given in Proposition~\ref{prop:uniform:density}.
This concludes the proof of
Theorem~\ref{th:equidistribution}.
\end{proof}

\begin{Remark}
\label{rm:coprime:in:Z:plus:Z}
Our proof is inspired by the proof of Theorem~6.4
in~\cite{Mirzakhani:growth:of:simple:geodesics} where (in
the language of the current paper) Mirzakhani proves that
ergodicity of the action of the mapping class group
$\operatorname{Mod}_{g,n}$ with respect to the Thurston
measure on the space of measured laminations $\cM\cL_{g,n}$
implies uniform density of the
$\operatorname{Mod}_{g,n}$-orbit of any integral
multicurve with respect to the set of all integral multicurves
$\cM\cL_{g,n}^\Z$.
\end{Remark}

\begin{Remark}
\label{rk:coprime}
Theorem~\ref{th:equidistribution} can be easily illustrated
in the context of the usual Lebesgue measure on $\R^2$.
Let us consider the subset $\cD_{\Z,1}$ of those points of
$\Z^2$ which have coprime coordinates. For any $R>0$ one can place a
ball of radius $R$ in $\R^2$ in such a way that the ball
would not contain a single point of $\cD_{\Z,1}$. In this
aspect $\cD_{\Z,1}$ does not resemble a sublattice of
$\Z^2$. Nevertheless, $\cD_{\Z,1}$ is
$\SLZ$-invariant, which implies that $\cD_{\Z,1}$ has
uniform density $\delta(\cD_{\Z,1})$. The value of
$\delta(\cD_{\Z,1})$ can be evaluated explicitly as
$\delta(\cD_{\Z,1})=\frac{6}{\pi^2}$. In particular, one
can compute areas of Jordan measurable sets $V$ in $\R^2$
by counting the number of points of $\epsilon\cD_{\Z,1}$
which get to $V$ and letting the mesh $\epsilon$ of the
grid tend to zero:
$$
\Area(V)=\frac{\pi^2}{6}\cdot\lim_{\epsilon\to+0}
\epsilon^2\cdot\Card\{V\cap \epsilon\cD_{\Z,1}\}\,.
$$

For each $k\in\N$ consider the analogous subset
$\cD_{\Z,k}\subset\Z^2$ of points with integer
coordinates $(x,y)$ such that $\gcd(x,y)=k$.
By definition, $\cD_{\Z,k}$ is obtained from $\cD_{\Z,1}$ by
homothety with dilatation coefficient $k$. Thus, any subset
$\cD_{\Z,k}$ also has uniform density, and
$\delta(\cD_{\Z,k})=\frac{1}{k^2}\cdot \delta(\cD_{\Z,1})$.
We have
$$
\Z^2=\bigsqcup_{k\in\N} \cD_{\Z,k}\,,
$$
which leads to
$$
1=\delta(\Z^2)=\sum_{k=1}^{+\infty} \delta(\cD_{\Z,k})=\sum_{k=1}^{+\infty} \frac{1}{k^2}\cdot\delta(\cD_{\Z,1})
=\zeta(2)\cdot\delta(\cD_{\Z,1})\,.
$$
\end{Remark}

%--------------------------------------------------------------------
\subsection{Invariance with respect to $\Re$- and $\Im$-foliations}
\label{ss:Invariance:along:Re:and:Im:foliations}

Given two subsets $\cD_\Z$ and $\cD'_\Z$ of square-tiled
surfaces that admit (uniform) densities, it is in general
false that $\delta(\cD_\Z \cap \cD'_\Z) =
\delta(\cD_\Z)\cdot \delta(\cD'_\Z)$. We introduce in this
Section $\Re$- and $\Im$-invariance that provides
sufficient conditions for this equality to hold.

A stratum of Abelian differentials of complex dimension $d$
is endowed with a pair of transverse foliations
of real dimension $d$ induced from the canonical
direct sum decomposition in period coordinates
\begin{equation}
\label{eq:re:plus:im}
H^1(S,\Sigma;\C) = H^1(S,\Sigma;\R) \oplus H^1(S,\Sigma;i\R)\,.
\end{equation}
In particular, in a neighborhood of any point $(X,\omega)$
of the stratum one has canonical direct product structure
in period coordinates. Locally, leafs of the
$\Im$-foliation are preimages of points under projection to
the first summand, and leafs of the $\Re$-foliation are
projections to the second summand. In other words, pairs
$(X,\omega)$ in a leaf of the $\Im$-foliation (respectively
of the $\Re$-foliation) share the cohomology class $[\Re
\omega]\in H^1(S,\Sigma;\R)$ (respectively
$[\Im \omega]\in H^1(S,\Sigma;\R)$).

By the results of Eskin--Mirzakhani--Mohammadi~\cite{Eskin:Mirzakhani},
\cite{Eskin:Mirzakhani:Mohammadi} (see the precise statement
on page~\pageref{thm:EMM}), the analogous
decomposition holds for any $\GLR$-invariant suborbifold
$\cL$. If $\cL$ is locally represented in period
coordinates as a finite union of several linear subspaces,
every such subspace is foliated by $\Re$ and
$\Im$-foliations. We formalize this trivial corollary of
highly nontrivial results of Eskin--Mirzakhani--Mohammadi
as a separate assertion.
\begin{Proposition}
\label{eq:Re:leaf:L} Let $\cL$ be an invariant arithmetic
suborbifold in some stratum $\cH(\kappa)$ of Abelian
differentials. Then the $\Re$-leaf in $\cL$ passing through
a point $(X,\omega)$ of $\cL$ coincides with the connected
component of the intersection of the $\Re$-leaf in the
ambient stratum $\cH(\kappa)$ passing through $(X,\omega)$
with $\cL$.

In period coordinates $H^1(S,\Sigma;\C)$ in
the neighborhood of $(X,\omega)$, the $\Re$-leaf in $\cL$
passing through a point $(X,\omega)$ is represented by a
finite union of linear subspaces of real dimension
$d=\dim_{\C}\cL$.
\end{Proposition}

\begin{Definition}
\label{def:Im:Re:invariant}
We say that a subset $\cD_\Z\subset\cL_\Z$ of square-tiled
surfaces in an invariant arithmetic suborbifold $\cL$ is
$\Re$-\textit{invariant} (respectively
$\Im$-\textit{invariant}) if for any point $S$ in
$\cD_\Z$ all points of $\cL_\Z$ located in the leaf of the
$\Re$-foliation (respectively $\Im$-foliation) in $\cL$
passing through $S$ also belong to $\cD_\Z$.
\end{Definition}

\begin{Remark}
In a very similar context in homogeneous dynamics the
analogous properties are called \textit{stable} and
\textit{unstable horocyclic invariance}.
\end{Remark}

Note that the unipotent subgroups
$$
\UhR = \left\{\begin{pmatrix} 1&t\\0&1\end{pmatrix}:\, t \in \R\right\}
\quad \text{and} \quad
\UvR = \left\{\begin{pmatrix} 1&0\\t&1\end{pmatrix}:\, t \in \R\right\},
$$
act along the leaves of the $\Re$-foliation and $\Im$-foliation
respectively. Thus any $\Re$-invariant (respectively $\Im$-invariant)
subset $\cD_\Z$ of square-tiled surface is automatically $\UhZ$-invariant
(respectively $\UvZ$-invariant). The converse is obviously not
true: the $\UhZ$-orbit of any square-tiled surface of genus $g \geq 2$
is $\UhZ$-invariant but usually not $\Re$-invariant.

\begin{Theorem}
\label{th:uncorrelated}
Let $\cD^h_\Z,\cD^v_\Z$ be subsets of square-tiled surfaces
in an invariant arithmetic orbifold $\cL$ that are respectively
$\Re$-invariant and $\Im$-invariant. Then $\cD^h_\Z$,
$\cD^v_\Z$ and $\cD^h_\Z \cap \cD^v_\Z$ have uniform
densities and the following equality holds
\[
\delta(\cD^h_\Z \cap \cD^v_\Z)
\,=\,
\delta(\cD^h_\Z)\ \cdot\ \delta(\cD^v_\Z).
\]
\end{Theorem}
\begin{proof}
The uniformity of the density of $\cD^h_\Z$ and of
$\cD^v_\Z$ follows from Theorem~\ref{th:equidistribution}.

Following the proof of Proposition~\ref{prop:uniform:density}
define the following quantity:
\[
\widetilde{\cN}_{\cD_\Z}(V, \epsilon)
\,=\,
\Card \{S \in \cD_\Z: \epsilon S \in V\}.
\]

We prove now
assertion~\eqref{label:counting2:prop:uniform:density} from
Proposition~\ref{prop:uniform:density} for the set
$\cD^h_\Z \cap \cD^v_\Z$. By
Proposition~\ref{prop:uniform:density} this assertion is
equivalent to uniformity of the density of $\cD^h_\Z \cap
\cD^v_\Z$. It is sufficient to prove
assertion~\eqref{label:counting2:prop:uniform:density} for
sets $V$ of the form $I \times J$ where $I$ and $J$ are
respectively Jordan measurable in
the summands~\eqref{eq:re:plus:im} in the natural product
structure of $\cL$ provided by $\Re$ and $\Im$ foliations
(see Remark~\ref{rk:jordan:measurable:riemann:integrable}
for the definition of Jordan measurable).
Indeed, any Jordan measurable set in $\cL$ can be approximated
from below and above by a finite union of products of Jordan
measurable sets of the form $I \times J$ up to an arbitrary
small difference in measures.
We also assume that any $V$ is located in a single
coordinate chart in period coordinates so that the products
$p_{\Re}$ and $p_{\Im}$ to the first and to the second term
of~\eqref{eq:re:plus:im} are well-defined, $I=p_{\Re}(V)$;
$J=p_{\Im}(V)$ and $V=I\times J$.

$\Re$- and $\Im$-invariance of $\cD^h_\Z$ and $\cD^v_\Z$
respectively, implies that the sets $\epsilon\cD^h_\Z$,
$\epsilon\cD^v_\Z$  and $\epsilon\cdot(\cD^h_\Z\cap\cD^v_\Z)$
locally have the following product structures
\begin{align*}
\epsilon\cD_\Z^h\cap V
&=
(I \cap p_{\Re}(\epsilon \cL_\Z))\times
(J \cap p_{\Im}(\epsilon \cD^h_\Z))\,,
\\
\epsilon\cD_\Z^v\cap V
&=
(I \cap p_{\Re}(\epsilon \cD^v_\Z))\times
(J \cap p_{\Im}(\epsilon \cL_\Z))\,,
\\
\epsilon\cdot(\cD_\Z^h\cap\cD_\Z^v)\cap V
&=
(I \cap p_{\Re}(\epsilon \cD^v_\Z))\times
(J \cap p_{\Im}(\epsilon \cD^h_\Z))\,,
\end{align*}
where
\begin{align*}
I \cap p_{\Re}(\epsilon \cL_\Z)
&= I \cap H^1(S,\Sigma;\epsilon\Z)\,,
\\
J \cap p_{\Im}(\epsilon \cL_\Z)
&= J \cap H^1(S,\Sigma;i\epsilon\Z)\,.
\end{align*}
This implies the following
relations for the counting functions:
\begin{align*}
\widetilde{\cN}_{\cL_\Z}(V, \epsilon)
&=
\Card(I \cap p_{\Re}(\epsilon \cL_\Z))\cdot
\Card(J \cap p_{\Im}(\epsilon \cL_\Z))\,,
\\
\widetilde{\cN}_{\cD_\Z^h}(V, \epsilon)
&=
\Card(I \cap p_{\Re}(\epsilon \cL_\Z))\cdot
\Card(J \cap p_{\Im}(\epsilon \cD^h_\Z))\,,
\\
\widetilde{\cN}_{\cD_\Z^v}(V, \epsilon)
&=
\Card(I \cap p_{\Re}(\epsilon \cD^v_\Z))\cdot
\Card(J \cap p_{\Im}(\epsilon \cL_\Z))\,,
\\
\widetilde\cN_{\cD^h \cap \cD^v}(V,\epsilon)
&=
\Card(I \cap p_{\Re}(\epsilon \cD^v_\Z))\cdot
\Card(J \cap p_{\Im}(\epsilon \cD^h_\Z))\,.
\end{align*}
Thus, for any set $V = I \times J$ as above we have the
equality
$$
\frac{\widetilde\cN_{\cD^h_\Z \cap \cD^v_\Z}(V,\epsilon)}
{\widetilde{\cN}_{\cL_\Z}(V, \epsilon)}
\,=\,
\frac{\widetilde{\cN}_{\cD^h_\Z}(V, \epsilon)}
{\widetilde{\cN}_{\cL_\Z}(V, \epsilon)}
\ \cdot\
\frac{\widetilde{\cN}_{\cD^v_\Z}(V,\epsilon)}
{\widetilde{\cN}_{\cL_\Z}(V, \epsilon)}\,.
$$
The definition of the Masur--Veech volume and the fact that
$\cD^h_\Z$ and $\cD^v_\Z$ have uniform densities imply the
existence of the limits:
\begin{align*}
\lim_{\epsilon\to+\infty}
\frac{\widetilde{\cN}_{\cD^h_\Z}(V, \epsilon)}
{\widetilde{\cN}_{\cL_\Z}(V, \epsilon)}
&=\delta(\cD^h_\Z)\,,
\\
\lim_{\epsilon\to+\infty}
\frac{\widetilde{\cN}_{\cD^v_\Z}(V, \epsilon)}
{\widetilde{\cN}_{\cL_\Z}(V, \epsilon)}
&=\delta(\cD^v_\Z)\,.
\end{align*}
Hence,
$$
\lim_{\epsilon\to+\infty}
\frac{\widetilde\cN_{\cD^h_\Z \cap \cD^v_\Z}(V,\epsilon)}
{\widetilde{\cN}_{\cL_\Z}(V, \epsilon)}
\,=\,
\delta(\cD^h_\Z)\cdot\delta(\cD^v_\Z)\,,
$$
which is equivalent to analogous relation written in
the form of the
assertion~\eqref{label:counting2:prop:uniform:density} from
Proposition~\ref{prop:uniform:density}.
\end{proof}

%--------------------------------------------------------------------
\subsection{Separatrix diagrams and critical graphs}
\label{s:sep:diagrams}
In this Section we recall some basic facts concerning
combinatorial geometry of Jenkins--Strebel differentials.
Assume that all leaves of the horizontal foliation of an
Abelian or quadratic differential are
either closed or connect critical points (a leaf joining
two critical points and having no critical points in its
interior is called a \textit{saddle connection} or a
\textit{separatrix}). An Abelian or
quadratic differential having this property is called a
\textit{Jenkins-Strebel} differential, see~\cite{Strebel}.
For example, square-tiled surfaces provide particular cases
of Jenkins-Strebel differentials.

\begin{figure}[htb]
\includegraphics{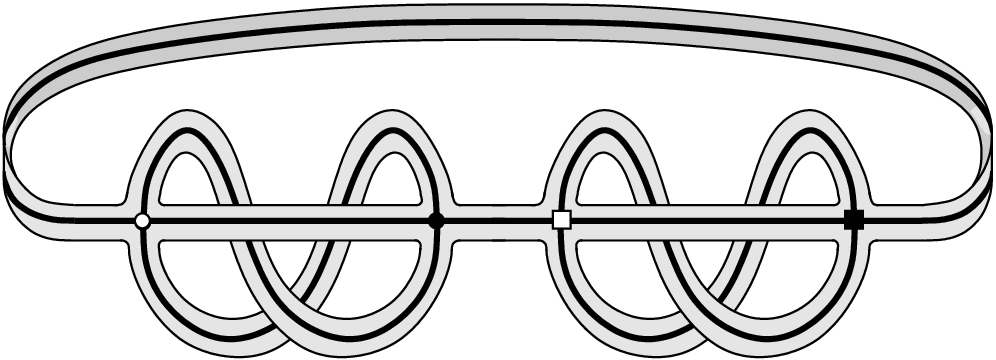}
\includegraphics{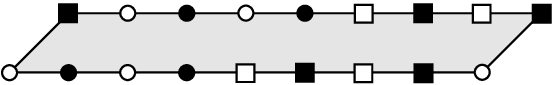}
\begin{picture}(0,0)(35,-89) % (35,-89)
\put(-88,-138){$\lambda_1$}
\put(-59,-112){$\lambda_2$}
\put(-32,-138){$\lambda_3$}
\put(-1,-112){$\lambda_4$}
\put(30,-138){$\lambda_5$}
\put(61,-112){$\lambda_6$}
\put(90,-138){$\lambda_7$}
\put(120,-112){$\lambda_8$}
\put(146,-138){$\lambda_1$}
\end{picture}

\begin{picture}(0,0)(3,-13) % (0,0)(3,-13)
\begin{picture}(0,0)(0,5)
\put(-88,-112){$\lambda_1$}
\put(-60,-112){$\lambda_2$}
\put(-32,-112){$\lambda_3$}
\put(-4,-112){$\lambda_4$}
\put(24,-112){$\lambda_5$}
\put(53,-112){$\lambda_6$}
\put(82,-112){$\lambda_7$}
\put(110,-112){$\lambda_8$}
\end{picture}
\begin{picture}(0,0)(28,5)
\put(-89,-155){$\lambda_4$}
\put(-60,-155){$\lambda_3$}
\put(-32,-155){$\lambda_2$}
\put(-4,-155){$\lambda_5$}
\put(24,-155){$\lambda_8$}
\put(53,-155){$\lambda_7$}
\put(81,-155){$\lambda_6$}
\put(110,-155){$\lambda_1$}
\end{picture}
\begin{picture}(0,0)(3,5)
\put(-127,-132){$\gamma_1$}
\put(117,-134){$\gamma_1$}
\end{picture}
\end{picture}
\vspace{147bp}
\caption{
\label{fig:Jenkins:Strebel}
A  Jenkins--Strebel Abelian
differential with a single cylinder
represented
as a ribbon graph (on top) and as
a parallelogram on the bottom.
}
\end{figure}

Following~\cite{Kontsevich:Zorich} we associate to each
Jenkins--Strebel differential a combinatorial data called a
\textit{separatrix diagram} (or \textit{critical graph} in
terminology of~\cite{Douady:Hubbard}.) A separatrix diagram
$\Gamma$ encodes the combinatorial geometry of the
completely periodic horizontal foliation. It is a ribbon
graph composed from a tubular neighborhood of the union of
all horizontal saddle connections. This ribbon graph is
endowed with the partition of boundary components into
pairs, where two components are paired when they are joined
by a cylinder filled with regular periodic horizontal
leafs.

\begin{Lemma}
\label{lm:number:of:edges:of:sep:diagram}
A separatrix diagram representing a Jenkins--Strebel Abelian differential
in a stratum of complex dimension $d$ has
exactly $d-1$ edge (i.e. $d-1$ horizontal saddle
connections).

A separatrix diagram representing a meromorphic
Jenkins--Strebel quadratic  differential with at most
simple poles in a stratum of complex dimension
$d$ has exactly $d$ edges (i.e. $d$ horizontal saddle
connections).
\end{Lemma}
\begin{proof}
A critical point corresponding to a zero of an Abelian
differential of degree $k$ has $k+1$ incoming and $k+1$
outgoing separatrix rays, where a marked point is
interpreted in this context as a ``zero of degree $0$''.
Thus, the total number $n$ of horizontal saddle connections
satisfies
$$
n=\text{sum of degrees of zeroes}
+ (\text{number of zeroes and marked points})\,.
$$
Note that
$$
2g-2=\text{sum of degrees of zeroes}\,,
$$
and that
$$
d=2g
+ (\text{number of zeroes and marked points}) -1\,,
$$
which implies the equality $n=d-1$.

Similarly, a critical point corresponding to a zero (or to
a simple pole) of degree $k$ of a quadratic differential
has $k+2$ horizontal separatrix rays, where a simple pole
is interpreted in this context as a ``zero of degree
$-1$''. Thus, the total number of horizontal saddle
connections of a Jenkins--Strebel quadratic differential
equals half the total number of separatrix rays. On the
other hand, the sum of degrees of zeroes and simple poles
equals $4g-4$ and the dimension of the stratum equals
$$
d=2g+(\text{number of zeroes, marked points and simple
poles})-2\,,
$$
which implies the assertion for quadratic differentials.
\end{proof}

For example, the Jenkins--Strebel Abelian differential
represented at Figure~\ref{fig:Jenkins:Strebel} belongs to
the stratum $\cH(1^4)$ of complex dimension $d=9$; it has
$9-1=8$ edges; see~\cite{Zorich:representatives} for this
and for further examples.

Removing the union of all horizontal saddle connections of
an Abelian Jenkins-Strebel differential from the
associated translation surface we decompose the surface
into a disjoint union of maximal cylinders filled with
closed regular leaves of the horizontal foliation. Denote
by $\nofcyl$ the number of these maximal horizontal
cylinders. The corresponding separatrix diagram viewed as a
ribbon graph has $2\nofcyl$ boundary components.

We will say that a vector $v\in\R^n$ is \textit{strictly
positive} if all its components are strictly positive
numbers, that is if $v$ belongs to $\R^n_+=\R^n_{>0}$. Any
Jenkins-Strebel differential having a given separatrix
diagram $\Gamma$ is completely determined by the following
additional \textit{length data}
\begin{itemize}
\item the length of each horizontal saddle connection $(\ell_i)_{i \in \{1,\ldots,d-1\}}
\in \R_{>0}^{d-1}$,
\item the height of each maximal horizontal cylinder $(h_j)_{j \in \{1, \ldots, m\}} \in \R_{>0}^m$,
\item a twist parameter for each cylinder $(t_j)_{j \in \{1, \ldots, m\}}\in\R^m$.
The twist parameter $t_j$ is determined by a choice of a
saddle connection $\gamma_j$ entirely contained in the corresponding
cylinder and joining the two opposite boundary
components of this cylinder.
\end{itemize}

For an Abelian Jenkins--Strebel differential $\omega$ the
length $\ell_i$ of each horizontal saddle connection
$\lambda_i$ coincides with the period
$\int_{\lambda_i}\omega$ under appropriate choice of
orientation of $\lambda_i$. Under appropriate choice of
orientation of $\gamma_j$, the period of $\omega$ along the
saddle connection $\gamma_j$ equals $t_j+i\cdot h_j$, where
$i=\sqrt{-1}$ in the latter expression.

In the example presented in
Figure~\ref{fig:Jenkins:Strebel} all horizontal saddle
connections of the separatrix diagram corresponding to the
Abelian differential $\omega$ have lengths
$\ell_i=|\lambda_i|=1$ for $i=1,\dots,8$. The height $h_1$
of the single maximal horizontal cylinder is equal to $1$
and the twist parameter corresponding to the saddle
connection $\gamma_1$ (represented by the lateral sides of
the parallelogram pattern) is also equal to $1$. The
relative period $\int_{\gamma_1}\omega$ is equal to $1+i$,
where $i=\sqrt{-1}$ in the latter expression.

Similarly, for a Jenkins--Strebel quadratic differential
$q$ on a complex curve $X$ the length parameters
$(\ell,t,h)$ can be interpreted in terms of periods of the
holomorphic form $\omega$ on the canonical double cover
$p:\hat X\to X$ where $p^\ast q =\omega^2$.

The length data is subject to the constraint that the sum
of the lengths of saddle connections on respectively top
and bottom boundary component of each maximal horizontal
cylinder should be the same. We denote by
$w_{\Gamma,j}^{top}(\ell)$ (respectively by
$w_{\Gamma,j}^{bot}(\ell)$) the sum of the lengths $\ell_i$
of those horizontal saddle connections that appear on the
top (respectively bottom) of the $j$-th cylinder. Note that
in the case of quadratic differentials the same saddle
connection might appear at the same boundary component
twice. In this case the corresponding parameter $\ell_i$ is
counted with weight $2$. The linear conditions are then
expressed as
\begin{equation}
\label{eq:bot:equal:top}
w_{\Gamma,j}^{top}(\ell) = w_{\Gamma,j}^{bot}(\ell)
\quad\text{for } j=1,\ldots,m\,.
\end{equation}
For an Abelian Jenkins--Strebel differential there is an
obvious linear relation between these constraints. Namely,
since every horizontal saddle connection is present exactly
once on top of some cylinder, and exactly once on the
bottom of some cylinder, taking the sum of
equations~\eqref{eq:bot:equal:top} we get a tautological
identity $\sum_{j=1}^m w_{\Gamma,j}^{top} = \sum_{j=1}^m
w_{\Gamma,j}^{bot}$ having the sum
$\ell_1+\ell_2+\dots+\ell_{d-1}$ (arranged in certain
order) on each side of the identity.
Lemma~\ref{lm:rank} below proves that this is the only
relation between equations~\eqref{eq:bot:equal:top} for
Abelian differentials and that the
equations~\eqref{eq:bot:equal:top} are independent for
quadratic differentials.

\begin{Lemma}
\label{lm:rank}
Consider the decomposition of a Jenkins--Strebel
Abelian differential or of a Jenkins--Strebel
quadratic differential with at most simple poles
into maximal horizontal cylinders obtained by removing
from the surface all horizontal saddle connections.
Let $m$ be the number of such cylinders.

The linear system~\eqref{eq:bot:equal:top}
has rank $m-1$ for Abelian Jenkins--Strebel differentials
and rank $m$ for quadratic Jenkins--Strebel differentials.
\end{Lemma}
\begin{proof}
We start with the case of Abelian Jenkins--Strebel
differentials. Let $r$ be the rank of linear
system~\eqref{eq:bot:equal:top}. We have seen that the
length parameters $\ell_i$, $t_j$ can be interpreted as
cocycles in the relative cohomology group
$H^1(S,\Sigma,\R)$. The cylinder twists $t_j$ can be chosen
arbitrary, which gives $m$ free parameters independent of
parameters $\ell_i$. We can complete them with $(d-1)-r$
additional free parameters $\ell_i$. Since
$\dim_{\R}H^1(S,\Sigma,\R)=d$, we conclude that $r\ge
m-1$. On the other hand, we have already seen that $r\le
m-1$ since there is at least one linear relation between
equations~\eqref{eq:bot:equal:top}.

The count for quadratic Jenkins--Strebel differentials is
analogous. The $m$ twists $t_j$ provide $m$ independent
parameters. However, this time by
Lemma~\ref{lm:number:of:edges:of:sep:diagram} we have $d$
horizontal saddle connections, and the linear span of all
independent parameters $\ell_i,t_j$ can have real dimension
at most $d$, which implies that linear
system~\eqref{eq:bot:equal:top} has to have the full rank
$r=m$.
\end{proof}

We will use the following elementary Lemma in the proof of
Proposition~\ref{prop:Re:leaf}.

\begin{Lemma}
\label{lem:span:hom}
Let $(X,\omega)$ be a Jenkins--Strebel holomorphic 1-form
on a complex curve $X$. Consider the decomposition of $X$
into union of maximal cylinders filled with closed
horizontal leaves. Consider the collection of all
horizontal saddle connections completed for each cylinder
by a non-horizontal segment inside the cylinder
joining some pair of singularities on the two boundary
components of the cylinder. Viewed as a collection of
relative homology cycles, such collection of saddle
connections spans the entire relative homology group
$H_1(X,\Sigma;\Z)$.
\end{Lemma}
\begin{proof}
The complement to the union of our segments is a disjoint
union of topological discs. Thus, the collection of
segments as above defines a $1$-skeleton of a $CW$
decomposition of the topological surface $S$ underlying the
complex curve $X$, where all $0$-cells belong to the finite
set $\Sigma$ of zeros and marked points of the differential.
\end{proof}

The following simple observation is of crucial importance
for us.

\begin{Proposition}
\label{prop:Re:leaf}
Let $\Gamma$ be the separatrix diagram corresponding to a
Jenkins--Strebel Abelian differential $(X,\omega)$. Denote
by $h$ the vector of heights of the associated maximal
horizontal cylinders. Let $\cH(\kappa)^{\mathit{comp}}$ be the
connected component of the ambient stratum containing
$(X,\omega)$. Consider the subset
$\cH(\kappa)^{\mathit{comp}}_{\Gamma,h}\subset\cH(\kappa)^{\mathit{comp}}$ of
all Jenkins--Strebel differentials in $\cH(\kappa)^{\mathit{comp}}$
sharing with $(X,\omega)$ the separatrix diagram $\Gamma$
and the vector of heights $h$. The subset
$\cH(\kappa)^{\mathit{comp}}_{\Gamma,h}$ coincides with the
$\Re$-leaf through $(X,\omega)$.
\end{Proposition}
\begin{proof}
The collection of parameters $\ell_i$, where
$i=1,\dots,d-1$, and $t_j+i\cdot h_j$, where $j=1,\dots,m$,
associated to the separatrix diagram $\Gamma$ represents
relative periods over a generating family of relative
cycles, see Lemma~\ref{lem:span:hom}. Thus, keeping the
vector of heights $h$ fixed and deforming the parameters
$\ell_i, t_j$ we stay inside the $\Re$-leaf through
$(X,\omega)$. On the other hand, Lemma~\ref{lm:rank}
implies that the space of $(\ell_i,t_j)$-parameters has the
same dimension $d=\dim_{\R}H^1(X,\Sigma;\R)$ as the
$\Re$-leaf. This implies, that the connected component of
$\cH(\kappa)^{\mathit{comp}}_{\Gamma,h}$ containing $(X,\omega)$
coincides with the $\Re$-leaf passing through $(X,\omega)$.

It remains to note that the set
$\cH(\kappa)^{\mathit{comp}}_{\Gamma,h}$ is connected. Indeed, in
$(\ell,t,h)$-coordinates, the coordinate $h$ is fixed; the
coordinates $t\in\R^m$ can be chosen arbitrary and the
coordinate vector $\ell$ can be chosen as arbitrary vector
in the intersection of $\R^{d-1}_+$ with the linear
subspace defined by equations~\eqref{eq:bot:equal:top}.
The latter intersection is, clearly, connected.
\end{proof}

Similarly, let $\Gamma$ be the separatrix diagram
corresponding to a Jenkins--Strebel quadratic differential
$(X,q)$. As before, denote by $h$ the vector of heights of
the associated maximal horizontal cylinders. Let
$\cQ(\xi)^{\mathit{comp}}$ be the connected component of the ambient
stratum containing $(X,q)$. Let $\cL$ be the arithmetic
invariant suborbifold obtained by the canonical double
cover construction from $\cQ(\xi)^{\mathit{comp}}$ (see
Section~\ref{ss:quad:as:invariant:orbifold}); let
$\cH(\kappa)$ be the stratum of Abelian differentials
ambient for $\cL$. Let $(\hat X, \omega)\in \cL$ be the
Abelian differential representing the canonical double
cover $p:\hat X\to X$ such that $p^\ast q=\omega^2$.

Consider the subset
$\cQ(\xi)^{\mathit{comp}}_{\Gamma,h}\subset\cQ(\xi)^{\mathit{comp}}$ of all
Jenkins--Strebel differentials in $\cQ(\xi)^{\mathit{comp}}$ sharing
with $(X,q)$ the separatrix diagram $\Gamma$ and the vector
of heights $h$. Consider the subset obtained by applying
the double cover construction to Jenkins--Strebel quadratic
differentials in $\cQ(\xi)^{\mathit{comp}}_{\Gamma,h}$. Let
$\cL_{\Gamma,h}\subset\cL$ be the connected
component of this set containing $(\hat X,\omega)$.

\begin{Proposition}
\label{prop:Re:leaf:quadratic}
The subset $\cL_{\Gamma,h}(\xi)$ coincides with
the $\Re$-leaf of $\cL$ passing through $(\hat X,\omega)$.
\end{Proposition}
\begin{proof}
The proof is analogous to the proof of the
Proposition~\ref{prop:Re:leaf}.
\end{proof}

As it was already implicitly done above, we always assume
that the maximal horizontal cylinders associated to a
separatrix diagram are numbered from $1$ to $m$ and that
the horizontal saddle connections are numbered from $1$ to
$d-1$. Suppose that a Jenkins--Strebel differential
$(X,\omega)$ has separatrix diagram $\Gamma$ and associated
parameters $(\ell, h, t)$. A collection of parameters
$(\ell', h', t')$ determine the same translation surface
$(X,\omega)$ if and only if $\ell = \ell'$, $h = h'$ and
\[
t_j - t'_{j} \equiv 0\, (\operatorname{mod}\, w_j)\,,\qquad\text{for }j=1,2,\dots,m\,,
\]
where $w_j = w_{\Gamma,j}^{bot}(\ell) =
w_{\Gamma,j}^{bot}(\ell')$. For this reason the parameters
$t_j$ are only relevant modulo $w_j$.

A square-tiled surface always represents a Jenkins--Strebel
differential. In the coordinate system as above a
Jenkins--Strebel differential corresponds to a square-tiled
surface if and only if the metric data $(\ell, h, t)$ is
integer.

\begin{figure}[htb]
\includegraphics{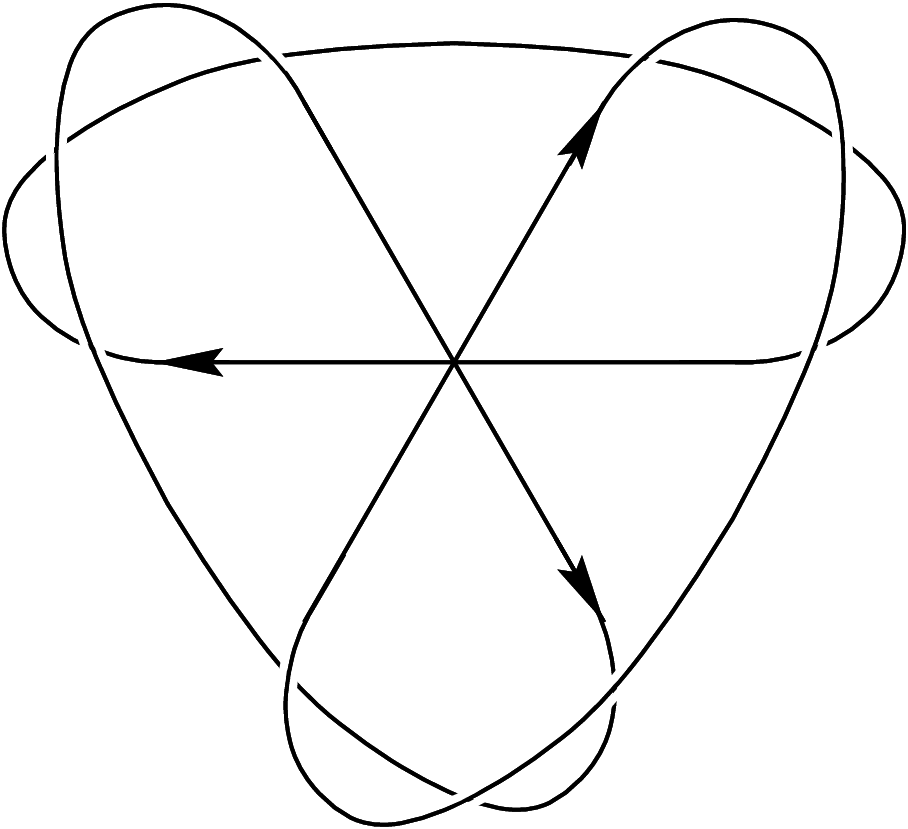}
\includegraphics{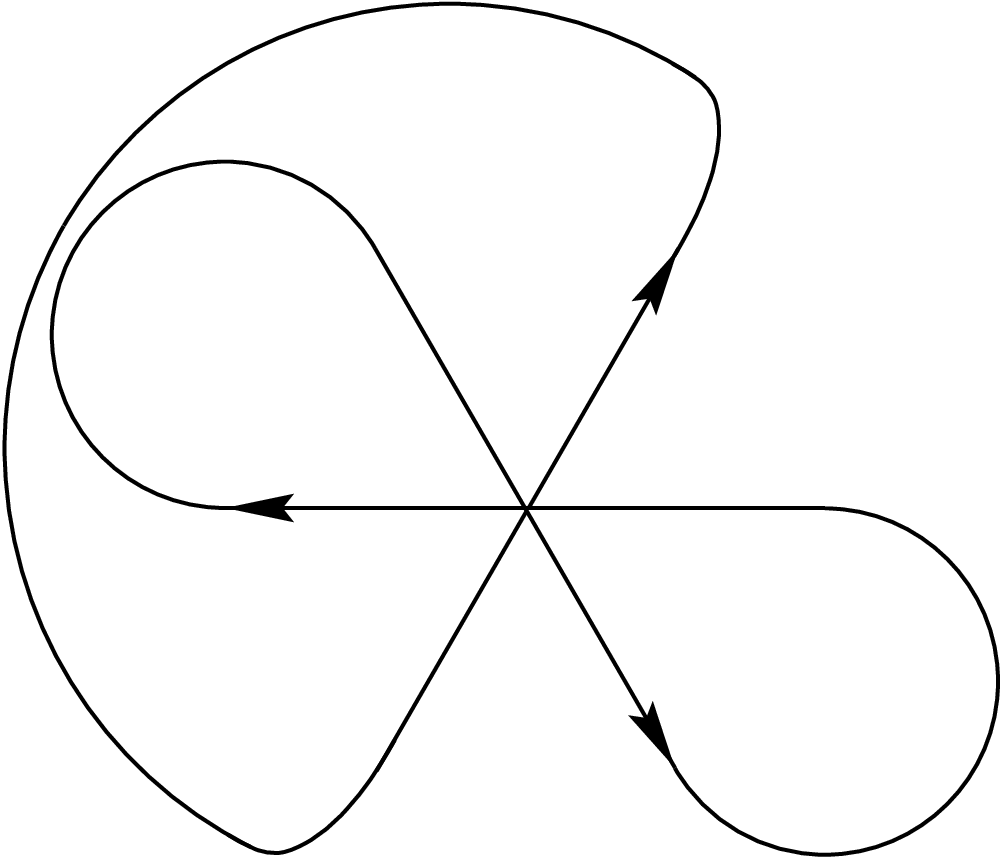}
\includegraphics{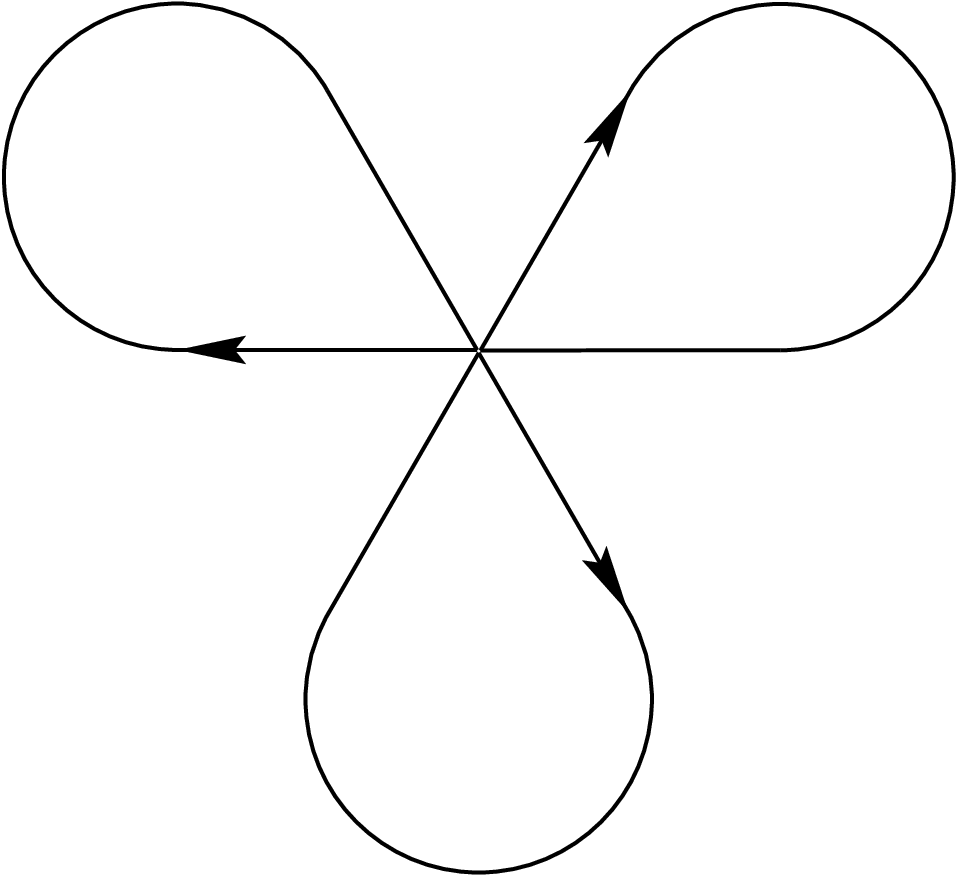}
\vspace{125bp} %
\begin{picture}(0,0)(170,-10)
\put(180,0){\begin{picture}(0,0)(0,0)
\put(-129,20){$\ell_1$}
\put(-95,75){$\ell_2$}
\put(-162,75){$\ell_3$}
\put(-12,100){$\ell_1$}
\put(35,85){$\ell_2$}
\put(-12,15){$\ell_3$}
\put(-129,-5){$\Gamma_1$}
\put(-12,-5){$\Gamma_2$}
\put(90,-5){$\Gamma_3$}
\end{picture}}
\end{picture}
\caption{
\label{fig:diag}
Ribbon graphs $\Gamma_1$ and $\Gamma_2$ are realizable
as separatrix diagrams while $\Gamma_3$ is not.}
\end{figure}

Note that a general ribbon graph $\Gamma$ does not
necessarily represent a separatrix diagram of some
Jenkins--Strebel Abelian differential. The horizontal
foliation of a Jenkins--Strebel Abelian differential is
oriented so the ribbon graph $\Gamma$ should be
\textit{orientable}: it should admit orientation of edges
in such way that at every vertex the incoming edges and
outgoing edges alternate with respect to the cyclic order
induced by the ribbon structure. The pairing of boundary
components should respect orientation of the surface.
And finally, the system of equations~\eqref{eq:bot:equal:top}
should admit a strictly positive solution
$(\ell_1,\dots,\ell_{d-1})$.

\begin{Example}
Figure~\ref{fig:diag} represents all oriented ribbon graphs
having a single vertex of valence $6$. Thus, any separatrix
diagram realized by a Jenkins--Strebel differential
from the stratum $\cH(2)$ is represented
by one of these three ribbon graphs.

The graph $\Gamma_1$ has two boundary components, so there
is a unique way to glue a cylinder to this ribbon graph. We
have already seen that in the case of a $1$-cylinder
separatrix diagram the system of
equations~\eqref{eq:bot:equal:top} degenerates to an
identity, which in the particular case of $\Gamma_1$ has
the form
$$
\ell_1+\ell_2+\ell_3=\ell_1+\ell_3+\ell_2\,.
$$
Thus, $\Gamma_1$ is realizable as a separatrix diagram,
which is the unique $1$-cylinder separatrix diagram in
$\cH(2)$. Any collection of parameters $(\ell,t,h)$ with
$\ell\in\R^{3}_+$, $t\in\R$, $h\in\R_+$ defines a legal
Jenkins--Strebel differential in $\cH(2)$. Restricting the
twist parameter to the subdomain $0\le t <
\ell_1+\ell_2+\ell_3=w_1$ we get a polyhedral cone
representing a single coordinate chart for all
Jenkins--Strebel differentials in $\cH(2)$ corresponding to
this separatrix diagram. Any point of the polyhedral cone
defines a unique well-defined Jenkins--Strebel differentials
in $\cH(2)$. Up to the symmetry of order $3$ which
cyclically changes the coordinates $(\ell_1,\ell_2,\ell_3)$,
every $1$-cylinder Jenkins--Strebel differentials in
$\cH(2)$ is represented by a unique point $(\ell,t,h)$ of
the resulting polyhedral cone.

The graph $\Gamma_2$ has four boundary components. However,
there is a unique way to glue two cylinders to this ribbon
graph in such way that the resulting closed surface would
be orientable and that the system of
equations~\eqref{eq:bot:equal:top} would admit a strictly
positive solution. Namely, we have to glue one of the two
cylinders to the inner boundary component of the loop
$\ell_1$ and to the inner boundary component of the loop
$\ell_3$, and the other cylinder to the remaining pair of
boundary components. This time the system of
equations~\eqref{eq:bot:equal:top} imposes a nontrivial
constraint $\ell_1=\ell_3$. As a coordinate chart in the
space of parameters $(\ell,t,h)$ we can choose the
following polyhedron. Take the intersection of the strictly
positive octant $\R^3_+$ with the plane
$\ell_1=\ell_3$ for the parameters $\ell$. Let $0\le t_1<
\ell_1$ and $0\le t_2< \ell_1+\ell_2$. Let $h\in\R^2_+$.
Any Jenkins--Strebel differential having this separatrix
diagram can be represented by appropriate parameters
$(\ell,t,h)$ in this polyhedral cone, and distinct points
of the polyhedral cone represent distinct (and
well-defined) Jenkins--Strebel differentials.

It is easy to verify that the ribbon graph $\Gamma_3$
is not realizable as a separatrix diagram: no matter how we
arrange the boundary components into pairs,
the system~\eqref{eq:bot:equal:top} does not admit any
strictly positive solution.
\end{Example}

By Proposition~\ref{prop:Re:leaf} every subset
$\cH(\kappa)^{\mathit{comp}}_{\Gamma,h}$ obtained by fixing the
collection $h$ of heights of cylinders of a Jenkins--Strebel
differential in $\cH(\kappa)^{\mathit{comp}}$ and by varying the
length parameters $(\ell,t)$ associated to the
corresponding separatrix diagram $\Gamma$ coincides with a
$\Re$-leaf in $\cH(\kappa)^{\mathit{comp}}$. Geometrically, each
such $\cH(\kappa)^{\mathit{comp}}_{\Gamma,h}$ is a torus bundle. The base of
this bundle is the polyhedral cone $C_+(\Gamma)$ (or possibly
its quotient with respect to a finite symmetry group) obtained as the
intersection of $\R^{d-1}_+$ with the linear
subspace~\eqref{eq:bot:equal:top}. The fiber over a point
$\ell$ in $C_+(\Gamma)$ is the torus
\begin{equation}
\label{eq:torus:of:twists}
\T^m=\R^m /(w_1\Z\oplus w_2\Z\oplus\dots\oplus w_m\Z)\,,
\end{equation}
where $w_1(\ell),\dots,w_m(\ell)$ are the perimeters of the
cylinders.

Some of separatrix diagrams realizable for the ambient
stratum $\cH(\kappa)$ might be not necessarily realizable
in a given invariant arithmetic suborbifold $\cL$. For
example, any Jenkins--Strebel differential in the
$\GLR$-orbit $\cL$ of the Eierlegende Wollmilchsau
represented in Figure~\ref{fig:Eierlegende:Wollmilchsau}
\begin{figure}[htb]
   %
   % Eierlegende Wollmilchsau
   %
\includegraphics{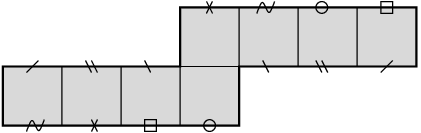}
\vspace{60pt}
\caption{
\label{fig:Eierlegende:Wollmilchsau}
Eierlegende Wollmilchsau.}
\end{figure}
has the same separatrix diagram as the Eierlegende
Wollmilchsau. In particular, $\cL$ does not contain a
single $1$-cylinder square-tiled surface.

Given an invariant arithmetic suborbifold $\cL$, we say
that a separatrix diagram $\Gamma$ is \textit{realizable in
$\cL$} if $\cL$ contains a Jenkins--Strebel differential
having the separatrix diagram $\Gamma$. We call a vector $h
= (h_1, h_2, \ldots, h_m)$ \emph{admissible for a
separatrix diagram} $\Gamma$ in $\cL$ if there is a
Jenkins--Strebel differential in $\cL$ with separatrix
diagram $\Gamma$ and with horizontal cylinders of heights
$h$.

%--------------------------------------------------------------------
\subsection{Densities of subsets of square-tiled surfaces with prescribed
horizontal or vertical decompositions}
\label{s:sep:diag:densities}
We now apply the results of
Sections~\ref{ss:uniform:density:via:horocyclic:invariance}--\ref{s:sep:diagrams}
to square-tiled surfaces with fixed combinatorics. In
particular, we prove
Corollary~\ref{cor:density:fixed:num:comp} that generalizes
Theorem~\ref{th:c11:in:genus:0} from the Introduction.

Though we state the main results only for the connected
components of the strata of Abelian and quadratic
differentials, all the results of this Section are valid
for any invariant arithmetic suborbifold. However, we postpone
these more general proofs to a separate
paper~\cite{DGZZ:arithmetic:suborbifolds} to avoid
overloading the current one.

In Section~\ref{ss:quad:as:invariant:orbifold} we assigned
to every connected component $\cQ(\xi)^{\mathit{comp}}$ of every
stratum of meromorphic quadratic differentials with at most
simple poles an invariant arithmetic orbifold $\cL$ in an
appropriate stratum of Abelian differentials. By
convention, everywhere in Section~\ref{s:Equidistribution}
we mark the preimages of simple poles on the cover. By
Proposition~\ref{prop:volumes:quad:and:double:cover} the
square-tiled surfaces in $\cQ(\xi)^{\mathit{comp}}$ and in $\cL$ are
in the natural correspondence. In particular, we have
direct relation between densities of the corresponding
subsets of square-tiled surfaces measured in
$\cQ(\xi)^{\mathit{comp}}$ and in $\cL$.

Consider a connected component $\cL$ of some stratum of
Abelian or quadratic differentials and a separatrix diagram
$\Gamma$ realizable in it. Given an integer height vector
$h$ admissible for $\Gamma$, we denote by
$\cL^{\mathit{horiz}}_{\Z,\Gamma,h}$ the set of square-tiled
surfaces in $\cL_{\Z}$ having separatrix diagram $\Gamma$
and vector $h$ of cylinder heights. We define
$\cL^{\mathit{vert}}_{\Z,\Gamma,h}$ as the set of square-tiled
surfaces obtained from square-tiled surfaces in
$\cL^{\mathit{horiz}}_{\Z,\Gamma,h}$ by rotation by $\frac{\pi}{2}$
clockwise.

\begin{Theorem}
\label{thm:density:cylinder:diag}
Let $\cL$ be a connected component of a stratum of Abelian
differentials, or an invariant arithmetic suborbifold in a
stratum of Abelian differentials obtained by the canonical
double cover construction from a connected component of a
stratum of quadratic differentials. Let $\Gamma$ be a
separatrix diagram realizable in $\cL$. Then for any
integer height vector $h$ admissible for $\Gamma$, the
subset $\cL^{\mathit{horiz}}_{\Z,\Gamma,h}$ of $\cL_{\Z}$ has a non-zero
density. Moreover, $c(\cL^{\mathit{horiz}}_{\Z,\Gamma,h}) =
\delta(\cL^{\mathit{horiz}}_{\Z,\Gamma,h})\cdot\Vol_1(\cL_1) $ is a
rational number and we have the following explicit
convergence rate as $N \to \infty$:
\[
\cN_{\cL^{\mathit{horiz}}_{\Z,\Gamma,h}}(\cL, N) = c(\cL^{\mathit{horiz}}_{\Z,\Gamma,h})\cdot \frac{N^d}{2d} + O(N^{d-1})\,,
\]
where $d = \dim_\C \cL$.
\end{Theorem}
\begin{proof}
We start the proof with the case when $\cL$ is a connected component of a stratum
of Abelian differentials.

Let $(\ell,t,h)$ be parameters of the horizontal cylinder
decomposition of a surface from $\cL^{\mathit{horiz}}_{\Z,\Gamma,h}$.
Then $\ell$ satisfies~\eqref{eq:bot:equal:top}, so the
width $w_{\Gamma,j}(\ell)$ of the $j$-th cylinder satisfies
$w_{\Gamma,j}(\ell)=w_{\Gamma,j}^{top}(\ell) =
w_{\Gamma,j}^{bot}(\ell)$. Hence, the flat area of the surface
can be expressed in $(\ell,h)$-coordinates as
\[
\Area(\ell, h, t) = \sum_{j=1}^m h_j \cdot w_{\Gamma,j}(\ell)\,.
\]

Let $L_\Gamma$ be the linear subspace of $\R^{d-1} \times \R^m$
consisting of couples $(\ell, t)$ satisfying the system of linear
equations~\eqref{eq:bot:equal:top}.

Let $P_{\Gamma,h}$ be the
polyhedron in $L_\Gamma$ defined by the following inequalities:
\begin{enumerate}
\item $\ell$ is strictly positive,
\item for each $j \in \{1, \ldots,m\}$, $0 \leq t_j < w_j(\ell)$,
\item $\Area(\ell, h, t) \leq 1$.
\end{enumerate}
It follows from the definition of a separatrix diagram that
$P_{\Gamma,h}$ is a relatively compact rational
polyhedron of full dimension in $L_\Gamma$. This implies that
$\Vol P_{\Gamma,h}$ is a (finite) strictly positive rational
number, where $\Vol$ is the Lebesgue measure in the vector
space $L_\Gamma$ normalized in such way that the fundamental
domain of the lattice $L_\Gamma \cap (\Z^{d-1}\times \Z^m)$ has
volume 1.

Up to a possible symmetry of finite order, square-tiled
surfaces tiled with at most $N$ unit squares in
$\cL^{\mathit{horiz}}_{\Z,\Gamma,h}$ are in the one-to-one
correspondence with the integral points in the inflated
polyhedron $N\cdot P_{\Gamma,h}$ (where the inflation does
not affect the parameters $h$ and acts on the space
$\R^{d-1}\times\R^m$ of $(\ell,t)$-coordinates by homothety
with coefficient $N$). Hence,
\begin{multline*}
\cN(\cL^{\mathit{horiz}}_{\Z,\Gamma,h}, N) =
\frac{1}{\Aut(\Gamma,h)}\cdot
\Card \big(N\cdot P_{\Gamma,h} \cap (\Z^{d-1}\times\Z^{m})\big)
=\\=
\frac{1}{\Aut(\Gamma,h)}\cdot
\Vol(P_{\Gamma,h}) \cdot N^{\dim(P_\Gamma)} + O(N^{\dim(P_\Gamma)-1})\,,
\end{multline*}
where $\dim(P_\Gamma)=\dim(L_\Gamma)=\dim_{\C}\cL$ by
Proposition~\ref{prop:Re:leaf}. By equation~\eqref{equiv:density}
this implies that $\cL^{\mathit{horiz}}_{\Z,\Gamma,h}$ has strictly
positive density $\delta(\cL^{\mathit{horiz}}_{\Z,\Gamma,h})$ and that
the quantity
$$
c(\cL^{\mathit{horiz}}_{\Z,\Gamma,h}) =
\delta(\cL^{\mathit{horiz}}_{\Z,\Gamma,h})\cdot\Vol_1(\cL_1)=
\dim_{\R}\cL\cdot
\frac{\Vol(P_{\Gamma,h})}{\Aut(\Gamma,h)}
$$
is a rational number.

Suppose now that $\cL$ is an invariant arithmetic
suborbifold in an appropriate stratum $\cH(\kappa)$ of
Abelian differentials obtained by the canonical double
cover construction from a connected component $\cQ(\xi)^{\mathit{comp}}$ of a stratum
of meromorphic quadratic differentials with at
most simple poles. Any realizable separatrix diagram
$\hat\Gamma$ in $\cL$ is induced from a realizable
separatrix diagram $\Gamma$ in the original connected
component of a stratum of quadratic differentials. By
convention, we mark all preimages of simple poles (if any).

It would be convenient to denote by $(\ell,t,h)$ the
lengths of horizontal saddle connections, the twist
parameters and the heights of the cylinders of the original
Jenkins--Strebel quadratic differential $(X,q)\in\cQ(\xi)^{\mathit{comp}}$.
Suppose that the original separatrix diagram $\Gamma$
contains $m$ cylinders, that $\gamma_1,\dots,\gamma_m$
represent the non-horizontal saddle connections crossing these
cylinders and that $\lambda_1,\dots,\lambda_d$ denote the
horizontal saddle connections. By
Lemma~\ref{lm:number:of:edges:of:sep:diagram} the number
$d$ of horizontal saddle connections coincides with the
dimension $d=\dim_{\C}\cQ(\xi)$ of the ambient stratum of
quadratic differentials. By construction, the separatrix
diagram $\hat\Gamma$ corresponding to the Jenkins--Strebel
differential on the double cover has $2m$ cylinders.

Every horizontal saddle connection $\lambda_i$ on $(X,q)$
has two preimages $\lambda'_i$ and $\lambda''_i$ on $(\hat
X,\omega)$. Every nonhorizontal saddle connection
$\gamma_j$ also has two preimages $\gamma'_j$ and
$\gamma''_j$ crossing the two maximal horizontal cylinders
on the cover corresponding to two copies of the horizontal
cylinder number $j$ on the base. The induced collection of
parameters $(\ell',\ell'',t',t'',h',h'')$ serves as the
complete collection of length parameters on the covering
Jenkins--Strebel Abelian differential with the separatrix
diagram $\hat\Gamma$. By construction they satisfy the
obvious relations
\begin{equation}
\label{eq:M1:M2}
\begin{cases}
\ell'_i=\ell''_i&\text{for }i=1,\dots,d\,,
\\
t'_j=t''_j&\text{for }j=1,\dots,m\,,
\\
h'_j=h''_j&\text{for }j=1,\dots,m\,,
\end{cases}
\end{equation}
where by construction we have
$\ell'_j=\ell''_j=\ell_j$ for $j=1,\dots,d$;
as well as
$t'_j=t''_j=t_j$ for $j=1,\dots,m$ and
$h'_j=h''_j=h_j$ for $j=1,\dots,m$.

Consider the linear subspace
$L_\Gamma\subset\R^{m-1}\times\R^m$ of simultaneous
solutions of the following two systems of linear equations:
system~\eqref{eq:bot:equal:top} associated to the
separatrix diagram $\hat\Gamma$ and
system~\eqref{eq:M1:M2}. Suppose that some collection
$(\ell',\ell'',t',t'',h',h'')$ with strictly positive
$\ell',\ell'',h',h''$ defines a point of $L_\Gamma$. Define
$\ell_j:=\ell'_j=\ell''_j$ for $j=1,\dots,d$; define
$t_j:=t'_j=t''_j=t_j$ for $j=1,\dots,m$ and define
$h_j:=h'_j=h''_j$ for $j=1,\dots,m$. The collection
$(\ell,t,h)$ defines a Jenkins--Strebel differential in
$\cQ(\xi)^{\mathit{comp}}$ with separatrix diagram $\Gamma$ and vector of
heights $h$. Reciprocally, any Jenkins--Strebel
differential in $\cQ(\xi)^{\mathit{comp}}$ with separatrix diagram $\Gamma$
and vector of heights $h$ defines a point in $L_\Gamma$.

The rest of the proof is completely analogous to the case
of Abelian differentials treated above.
\end{proof}

Applying the results from Section~\ref{ss:uniform:density:via:horocyclic:invariance}
and~\ref{ss:Invariance:along:Re:and:Im:foliations} we obtain the following
corollary.

\begin{Corollary}
\label{cor:fixed:horiz:vert:decompositions}
Let $\cL$ be a connected component of a stratum of Abelian
differentials, or an invariant arithmetic suborbifold in a
stratum of Abelian differentials obtained by the canonical
double cover construction from a connected component of a
stratum of quadratic differentials. Let $\Gamma$ and
$\Gamma'$ be realizable separatrix diagrams in $\cL$ and let
$h$ and $h'$ be admissible integer vectors of heights of
cylinders for $\Gamma$ and $\Gamma'$ respectively. Then all
of $\cL^{\mathit{horiz}}_{\Z,\Gamma,h}$,
$\cL^{\mathit{vert}}_{\Z,\Gamma',h'}$ and
$\cL^{\mathit{horiz}}_{\Z,\Gamma,h}\, \cap\,
\cL^{\mathit{vert}}_{\Z,\Gamma',h'}$ have strictly positive uniform
densities in $\cL$ and
\[
\delta(\cL^{\mathit{horiz}}_{\Z,\Gamma,h} \cap \cL^{\mathit{vert}}_{\Z,\Gamma',h'})
=
\delta(\cL^{\mathit{horiz}}_{\Z,\Gamma,h})\ \cdot\ \delta(\cL^{\mathit{vert}}_{\Z,\Gamma',h'})\,,
\]
where $\delta(\cL^{\mathit{vert}}_{\Z,\Gamma',h'})=\delta(\cL^{\mathit{horiz}}_{\Z,\Gamma',h'})$.
\end{Corollary}
\begin{proof}
Theorem~\ref{thm:density:cylinder:diag} shows that the set
$\cL^{\mathit{horiz}}_{\Z,\Gamma,h}$ has a strictly positive
density. By Proposition~\ref{prop:Re:leaf} (respectively by
Proposition~\ref{prop:Re:leaf:quadratic} in the case of
quadratic differentials) the set
$\cL^{\mathit{horiz}}_{\Z,\Gamma,h}$ is $\Re$-invariant, which
allows us to use Theorem~\ref{th:uncorrelated} and
conclude that the density is, actually, uniform.

The analogous results hold for the vertically completely
periodic surfaces with separatrix diagram $\Gamma'$, namely, by
the same arguments the set $\cL^{\mathit{vert}}_{\Z,\Gamma',h'}$ has
uniform strictly positive density and is $\Im$-invariant.

Theorem~\ref{th:uncorrelated} implies that the intersection
$\cL^{\mathit{horiz}}_{\Gamma,h} \cap \cL^{\mathit{vert}}_{\Gamma',h'}$ has uniform
strictly positive density equal to the product of the
densities of the two sets.

To complete the proof note that the set
$\cL^{\mathit{vert}}_{\Z,\Gamma',h'}$ is constructed by applying to each
square-tiled surface in the set $\cL^{\mathit{horiz}}_{\Z,\Gamma',h'}$
clockwise rotation by $\frac{\pi}{2}$. Thus,
$\cN_{\cL^{\mathit{vert}}_{\Z,\Gamma',h'}}(\cL,N)=\cN_{\cL^{\mathit{horiz}}_{\Z,\Gamma',h'}}(\cL,N)$.
The equality
$\delta(\cL^{\mathit{vert}}_{\Z,\Gamma',h'})=\delta(\cL^{\mathit{horiz}}_{\Z,\Gamma',h'})$
now follows directly from the
definition~\eqref{eq:def:delta} of the density.
\end{proof}

In Section~\ref{ss:pairs:of:multicurves:as:square:tiled:surfaces}
we assigned to every
square-tiled surfaces on the sphere
a pair of transverse multicurves.
Analogously,
one can assign to any square-tiled surface in a stratum of
Abelian differential $\cH(\kappa)$
a pair of transverse multicurves composed of all closed
regular flat geodesics passing through the centers of the squares.

Recall that closed regular horizontal geodesics on a
square-tiled surface $(X,\omega,\Sigma)$ are organized into
maximal horizontal cylinders having at least one point of
the set $\Sigma$ on each of the boundary components and no
points of $\Sigma$ in the interior of the cylinder. All
regular closed horizontal geodesics inside each maximal
horizontal cylinder belong to the same free homotopy class
on $X\setminus\Sigma$. Similar property is valid for the
vertical cylinder decomposition. This observation allows to
express the horizontal (respectively vertical) multicurve
associated to a square-tiled surface in terms of the
horizontal (respectively vertical) cylinder decomposition
as the weighted sum $h_1 \gamma_1 + h_2 \gamma_2 + \ldots +
h_m \gamma_m$, where $h_i$ and $\gamma_i$ are respectively
the height and the core curve of the $i$-th cylinder in
this decomposition. (Here by the ``height'' of a
square-tiled cylinder we mean the number of circular bands
of squares which form the cylinder.) In particular, the
number of components of the resulting horizontal
(respectively vertical) multicurve counted with weights is
the total height $h_1 + h_2 + \ldots + h_m$ of all maximal
horizontal (respectively vertical) cylinders.

Let $\cL$ be a connected component of a stratum of Abelian
differentials or the invariant arithmetic orbifold obtained
by the canonical double cover construction from a component
of a stratum of quadratic differentials. Denote by
$\cL^{\mathit{horiz}}_{\Z,k}$ (respectively by
$\cL^{\mathit{vert}}_{\Z,k}$) the subset of square-tiled
surfaces in $\cL$ whose associated horizontal (respectively
vertical) multicurve has $k$ components. In other words
$\cL^{\mathit{horiz}}_{\Z,k}$ (respectively
$\cL^{\mathit{vert}}_{\Z,k}$) is the subset of square-tiled
surfaces in $\cL$ tiled with unit squares arranged into
exactly $k$ horizontal (respectively vertical) bands of
squares. In terms of the docomposition into maximal
horizontal (respectively vertical cylinders) we have
$k=h_1+\dots+h_m$, where $h_1,\dots,h_m$ are the heights of
the cylinders.

\begin{Corollary}
\label{cor:density:fixed:num:comp}
Let $\cL$ be a connected component of a stratum of Abelian
differentials or the invariant arithmetic orbifold obtained
by the canonical double cover construction from a component
of a stratum of quadratic differentials. Let $k, k_h, k_v\in
\N$ be positive integers in the case of Abelian
differentials and even positive integers in the case of
meromorphic quadratic differentials with at most simple
poles.

For any such $\cL, k, k_h, k_v$, all of
$\cL^{\textit{horiz}}_{\Z,k_h}$,
$\cL^{\textit{horiz}}_{\Z,k_v}$ and
$\cL^{\textit{horiz}}_{\Z,k_h}\cap\cL^{\textit{horiz}}_{\Z,k_v}$
have strictly positive uniform densities in $\cL$ and
\begin{equation}
\label{eq:delta:delta}
\delta(\cL^{\mathit{horiz}}_{\Z,k_h}
\cap \cL^{\mathit{vert}}_{\Z,k_v})
=
\delta(\cL^{\mathit{horiz}}_{\Z,k_h})\
\cdot\ \delta(\cL^{\mathit{vert}}_{\Z,k_v})\,,
\end{equation}
where $\delta(\cL^{\mathit{horiz}}_{\Z,k}) =
\delta(\cL^{\mathit{vert}}_{\Z,k})$.

Moreover,
$c(\cL_{\Z,k}) = \delta(\cL^{\mathit{horiz}}_{\Z,k}) \cdot\Vol_1(\cL_1) $
is a rational number and we have the explicit convergence
rate
$$
\cN_{\cL^{\mathit{horiz}}_{\Z,k}}(\cL, N)
=\cN_{\cL^{\mathit{vert}}_{\Z,k}}(\cL, N)
= c(\cL_{\Z,k})\cdot \frac{N^d}{2d} + O(N^{d-1})\,,
$$
where $d = \dim_\C \cL$.
\end{Corollary}
\begin{proof}
The proof follows from Corollary~\ref{cor:fixed:horiz:vert:decompositions}.
Note that each stratum has
only finite number of separatrix diagrams and hence
\begin{equation}
\label{eq:L:horiz:Z:k}
\cL^{\mathit{horiz}}_{\Z,k}=\bigsqcup_{\Gamma} \bigsqcup_{h = (h_1, \ldots, h_m)} \cL^{\mathit{horiz}}_{\Z,\Gamma,h}\,.
\end{equation}
Here the first union is taken with respect to the finite
set of separatrix diagrams realizable in $\cL$. For each
such separatrix diagram we denote by $m=m(\Gamma)$ the
number of maximal horizontal cylinders associated to
$\Gamma$. The second (finite) union is taken with respect
to all positive integer heights $h_1,\dots,h_m$ admissible
for $\Gamma$ in $\cL$ and satisfying the relation $h_1 +
\ldots + h_m = k$. This proves that for any integer $k$ the
set $\cL^{\mathit{horiz}}_{\Z,k}$ has uniform density
satisfying the relation
\begin{equation}
\label{eq:delta:horiz:Z:k}
\delta(\cL^{\mathit{horiz}}_{\Z,k})=\sum_{\Gamma} \sum_{h}
\delta(\cL^{\mathit{horiz}}_{\Z,\Gamma,h})\,,
\end{equation}
where the $\delta(\cL^{\mathit{horiz}}_{\Z,\Gamma,h})$ are
the densities appearing in
Corollary~\ref{cor:fixed:horiz:vert:decompositions}.

To justify that for any $k\in\N$, in the case when $\cL$ is
a component of a stratum of Abelian differentials
(respectively, for any $k\in 2\N$ in the case when $\cL$ is
an invariant arithmetic orbifold associated to a component
of a stratum of quadratic differentials)
$\delta(\cL^{\mathit{horiz}}_{\Z,k})$ is a strictly
positive number, it is sufficient to use the fact that each
component of any stratum of Abelian or quadratic
differentials admits a one-cylinder square-tiled surface,
see~\cite{Kontsevich:Zorich}
and~\cite{Zorich:representatives}. Considering the
associated separatrix diagram $\Gamma$ we conclude that the
term $\delta_{\Z,\Gamma,k}$ is already strictly positive.

The proof that $\cL^{\mathit{vert}}_{\Z,k}$ has strictly
positive uniform density is completely analogous. The proof
that
$\cL^{\textit{horiz}}_{\Z,k_h}\cap\cL^{\textit{horiz}}_{\Z,k_v}$
has uniform density equal to product of densities now
follows from Theorem~\ref{th:uncorrelated} in the same way
as was done in
Corollary~\ref{cor:fixed:horiz:vert:decompositions}.

Finally, the rationality of $c(\cL_{\Z,k})$ and the
explicit convergence rate in the remaining assertion follow
now from Theorem~\ref{thm:density:cylinder:diag} applied to
each of the terms in~\eqref{eq:L:horiz:Z:k}
and~\eqref{eq:delta:horiz:Z:k} respectively.
\end{proof}

\begin{Remark}
\label{rm:other:combinations}
We have seen that each of the sets
$\cL^{\mathit{horiz}}_{\Z,\Gamma,h}$ and
$\cL^{\mathit{horiz}}_{\Z,k_h}$, constructed above are
$\Re$-invariant. Similarly, the sets
$\cL^{\mathit{vert}}_{\Z,\Gamma,h}$,
$\cL^{\mathit{vert}}_{\Z,k_v}$ are $\Im$-invariant. Thus,
by Theorem~\ref{th:uncorrelated} all these sets as well as
any intersection of a set from the first group and a set
from the second group has uniform density, and the density
of the intersection is the product of densities of the two
corresponding sets. We do not need to fix combinatorics of
the horizontal and vertical cylinder decomposition in the
same way: the statement is valid for any combination like
$\cL^{\mathit{horiz}}_{\Z,\Gamma,h}\cap\cL^{\mathit{vert}}_{\Z,k}$.
\end{Remark}

We are ready to prove Theorem~\ref{th:c1:in:genus:0} and
Theorems~\ref{th:c11:in:genus:0} with exception for the
explicit value~\eqref{eq:c1:answer} for
$\cyl_1\left(\cQ(\nu,-1^{|\nu|+4})\right)$ which will be
proved in
Section~\ref{s:Computations:for:square:tiled:surfaces}.

\begin{proof}[Proof of Theorems~\ref{th:c1:in:genus:0} and~\ref{th:c11:in:genus:0}]
By assertions (1) and (2) of
Proposition~\ref{prop:volumes:quad:and:double:cover} the
number $\cS^{\mathit{labeled}}_{k,\nu}(N)$ of square-tiled
surfaces in the stratum $\cQ(\nu,-1^{|\nu|+4})$ with
labeled zeros and poles tiled with at most $2N$ squares
organized into $k$ horizontal bands and the number
$\cS^{\mathit{labeled}}_{k_{h},k_{v},\nu}(N)$ of
square-tiled surfaces in the stratum
$\cQ(\nu,-1^{|\nu|+4})$ with labeled zeroes and poles tiled
with at most $2N$ squares composed of $k_{h}$ horizontal
and $k_{v}$ vertical bands of squares satisfies the
following relations:
\begin{align}
\label{eq:h}
\cS^{\mathit{labeled}}_{k,\nu}(N)
&=
\frac{1}{\deg(P)}\cdot
\cN_{\cL^{\mathit{horiz}}_{\Z,k}}(\cL, 4N)\,,
\\
\label{eq:h:v}
\cS^{\mathit{labeled}}_{k_{h},k_{v},\nu}(N)
&=
\frac{1}{\deg(P)}\cdot
\cN_{\cL^{\mathit{horiz}}_{\Z,k_h}\cap\cL^{\mathit{vert}}_{\Z,k_v}}(\cL, 4N)\,.
\end{align}
By Corollary~\ref{cor:density:fixed:num:comp} we have
$$
\cN_{\cL^{\mathit{horiz}}_{\Z,k}}(\cL, 4N)
= \delta(\cL^{\mathit{horiz}}_{\Z,k}) \cdot\Vol_1(\cL_1)
\cdot \frac{(4N)^d}{2d} + O(N^{d-1})
\quad\text{as }N\to+\infty\,.
$$
Combining~\eqref{eq:h} with the latter expression and
applying~\eqref{eq:Vol:cL:Vol:cQ} we obtain:
\begin{multline*}
\cS^{\mathit{labeled}}_{k,\nu}(N)
=
\frac{1}{\deg(P)}\cdot
\cN_{\cL^{\mathit{horiz}}_{\Z,k}}(\cL, 4N)
=\\=
\delta(\cL^{\mathit{horiz}}_{\Z,k})
\cdot \frac{4^d}{\deg(P)}
\cdot\Vol_1(\cL_1)
\cdot \frac{N^d}{2d} + O(N^{d-1})
=\\=
\delta(\cL^{\mathit{horiz}}_{\Z,k})
\cdot\Vol_1(\cQ_1(\nu,-1^{|\nu|+4}))
\cdot \frac{N^d}{2d} + O(N^{d-1})
\quad\text{as }N\to+\infty\,.
\end{multline*}
Letting
$$
\cyl_k\left(\cQ(\nu,-1^{|\nu|+4})\right)
:=\delta(\cL^{\mathit{horiz}}_{\Z,k})
\cdot\Vol_1(\cQ(\nu,-1^{|\nu|+4}))
$$
we obtain the desired formula~\eqref{eq:c1:N:d}.

The rationality of
$\cyl_k\left(\cQ(\nu,-1^{|\nu|+4})\right)$ follows from
rationality of $c(\cL_{\Z,k})$ proved in
Corollary~\ref{cor:density:fixed:num:comp} combined with
the relation
\begin{equation}
\label{eq:cyl:as:delta}
\cyl_k\left(\cQ(\nu,-1^{|\nu|+4})\right)
=\cfrac{\deg(P)}{4^d}\cdot c(\cL_{\Z,k})
\end{equation}
This completes the proof of
Theorem~\ref{th:c1:in:genus:0} with exception for the
explicit value~\eqref{eq:c1:answer} of
$\cyl_1\left(\cQ(\nu,-1^{|\nu|+4})\right)$ which we
compute in
Section~\ref{s:Computations:for:square:tiled:surfaces}.

The proof of Theorem~\ref{th:c11:in:genus:0} is analogous.
Namely, by property~\eqref{equiv:density} of a density of a
subset
$\cL^{\mathit{horiz}}_{\Z,k_h}\cap\cL^{\mathit{vert}}_{\Z,k_v}$
of $\cL_\Z$ we have
$$
\cN_{\cL^{\mathit{horiz}}_{\Z,k_h}\cap\cL^{\mathit{vert}}_{\Z,k_v}}(\cL, 4N)
= \delta(\cL^{\mathit{horiz}}_{\Z,k_h}\cap\cL^{\mathit{vert}}_{\Z,k_v})
\cdot\Vol_1(\cL_1)
\cdot \frac{(4N)^d}{2d} + o(N^{d})
\text{ as }N\to+\infty\,.
$$
Combining~\eqref{eq:h:v} with the latter expression and
applying~\eqref{eq:Vol:cL:Vol:cQ} as above we obtain:
\begin{multline*}
\cS^{\mathit{labeled}}_{k_{h},k_{v},\nu}(N)
=
\frac{1}{\deg(P)}\cdot
\cN_{\cL^{\mathit{horiz}}_{\Z,k_h}\cap\cL^{\mathit{vert}}_{\Z,k_v}}(\cL, 4N)\,,
=\\=
\delta(\cL^{\mathit{horiz}}_{\Z,k_h}\cap\cL^{\mathit{vert}}_{\Z,k_v})
\cdot \frac{4^d}{\deg(P)}
\cdot\Vol_1(\cL_1)
\cdot \frac{N^d}{2d} + o(N^{d})
=\\=
\delta(\cL^{\mathit{horiz}}_{\Z,k_h}\cap\cL^{\mathit{vert}}_{\Z,k_v})
\cdot\Vol_1(\cQ_1(\nu,-1^{|\nu|+4}))
\cdot \frac{N^d}{2d} + o(N^{d})
\quad\text{as }N\to+\infty\,.
\end{multline*}
Letting
\begin{equation}
\label{eq:cyl:h:v:as:delta}
\cyl_{k_{h},k_{v}}\left(\cQ(\nu,-1^{|\nu|+4})\right)
:=\delta(\cL^{\mathit{horiz}}_{\Z,k_h}\cap\cL^{\mathit{vert}}_{\Z,k_v})
\cdot\Vol_1(\cQ(\nu,-1^{|\nu|+4}))
\end{equation}
we obtain formula~\eqref{eq:c11:Q:nu}. The remaining
equation~\eqref{eq:c11:as:c1:squared:over:Vol} is obtained
by rewriting~\eqref{eq:delta:delta} in terms of the
quantities $\cyl_k\left(\cQ(\nu,-1^{|\nu|+4})\right)$ and
$\cyl_{k_{h},k_{v}}\left(\cQ(\nu,-1^{|\nu|+4})\right)$
defined by expressions~\eqref{eq:cyl:as:delta}
and~\eqref{eq:cyl:h:v:as:delta} respectively.
Theorem~\ref{th:c11:in:genus:0} is proved.
\end{proof}

We complete this section with the following Lemma used in
the proof of
Theorem~\ref{th:any:trees:connected:proportion}.
We state it for strata of meromorphic quadratic differentials
in genus zero, which allows us to use notations
introduced in Section~\ref{s:From:arc:systems:and:meanders:to:square:tiled:surfaces}.
However, both the statement and the proof of the Lemma
are applicable to any connected component of any stratum
of Abelian or quadratic differentials up to adjustment
of notations specific strata in genus zero.

Let $\Gamma$ be a separatrix diagram realizable in a
stratum $\cQ(\nu,-1^{|\nu|+4})$, let $h$ be an admissible
integer vector of heights, let $k$ be a positive integer.
We denote by $\cS^{\textit{labeled}}_{\nu}(N)$ the number
of all square-tiled surfaces in $\cQ(\nu,-1^{|\nu|+4})$
tiled with at most $2N$ identical squares. We denote by
$\cS^{\textit{labeled}}_{\Gamma,h,\nu}(N)$ the
number of square-tiled surfaces as above with additional
restriction that their horizontal cylinder decomposition is
represented by $(\Gamma,h)$. We denote by
$\cS^{\textit{labeled}}_{k,\nu}(N)$ the number
of square-tiled surfaces in $\cQ(\nu,-1^{|\nu|+4})$ tiled
with at most $2N$ identical squares and having $k$ vertical
bands of squares. Finally, we denote by
$\cS_{\Gamma,h,k,\nu}(N)$ the number of square-tiled
surfaces as above with horizontal cylinder decomposition
represented by $(\Gamma,h)$ and having $k$ vertical bands
of squares.

\begin{Lemma}
\label{lm:fixed:Gamma:h:kvert}
For any stratum $\cQ(\nu,-1^{|\nu|+4})$, any separatrix
diagram $\Gamma$ realizable in this stratum, any admissible
integer vector $h$ of heights and any positive integer
$k$ we have:
\begin{equation}
\label{eq:fixed:Gamma:h:kvert}
\lim_{N\to+\infty}
\frac{\cS^{\textit{labeled}}_{\Gamma,h,k,\nu}(N)}
{\cS^{\textit{labeled}}_{\Gamma,h,\nu}(N)}
=
\lim_{N\to+\infty}
\frac{\cS^{\textit{labeled}}_{k,\nu}(N)}{\cS^{\textit{labeled}}_{\nu}(N)}\,.
\end{equation}
\end{Lemma}
\begin{proof}
We follow the proof of Theorems~\ref{th:c1:in:genus:0}
and~\ref{th:c11:in:genus:0}. Let $\cL$
be the affine arithmetic orbifold
associated to the stratum $\cQ(\nu,-1^{|\nu|+4})$.
We have
\begin{align*}
\cS^{\mathit{labeled}}_{\Gamma,h,\nu}(N)
&=
\frac{1}{\deg(P)}\cdot
\cN_{\cL^{\mathit{horiz}}_{\Z,\Gamma,h}}(\cL, 4N)\,,
\\
\cS^{\mathit{labeled}}_{\Gamma,h,k,\nu}(N)
&=
\frac{1}{\deg(P)}\cdot
\cN_{\cL^{\mathit{horiz}}_{\Z,\Gamma,h}
\cap\cL^{\mathit{vert}}_{\Z,k}}(\cL, 4N)\,,
\end{align*}
where the sets $\cL^{\mathit{horiz}}_{\Z,\Gamma,h}$ and
$\cL^{\mathit{vert}}_{\Z,k}$ are as in
Corrolaries~\ref{cor:fixed:horiz:vert:decompositions}
and~\ref{cor:density:fixed:num:comp} respectively.
Thus,
$$
\frac
{\cS^{\mathit{labeled}}_{\Gamma,h,k,\nu}(N)}
{\cS^{\mathit{labeled}}_{\Gamma,h,\nu}(N)}
=
\frac
{\cN_{\cL^{\mathit{horiz}}_{\Z,\Gamma,h}
\cap\cL^{\mathit{vert}}_{\Z,k}}(\cL, 4N)}
{\cN_{\cL^{\mathit{horiz}}_{\Z,\Gamma,h}}(\cL, 4N)}\,.
$$
By Remark~\ref{rm:other:combinations} we have
$$
\delta(\cL^{\mathit{horiz}}_{\Z,\Gamma,h}
\cap\cL^{\mathit{vert}}_{\Z,k})
=
\delta(\cL^{\mathit{horiz}}_{\Z,\Gamma,h})\
\cdot\ \delta(\cL^{\mathit{vert}}_{\Z,k})\,,
$$
which implies that
$$
\lim_{N\to+\infty}\frac
{\cN_{\cL^{\mathit{horiz}}_{\Z,\Gamma,h}
\cap\cL^{\mathit{vert}}_{\Z,k}}(\cL, 4N)}
{\cN_{\cL^{\mathit{horiz}}_{\Z,\Gamma,h}}(\cL, 4N)}
=
\lim_{N\to+\infty}\frac
{\cN_{\cL^{\mathit{vert}}_{\Z,k}}(\cL, 4N)}
{\cN_{\cL_{\Z}}(\cL, 4N)}\,.
$$
Applying~\eqref{eq:h} and analogous relation
$
\cS^{\mathit{labeled}}_{\nu}(N)
=
\frac{1}{\deg(P)}\cdot
\cN_{\cL^{\mathit{horiz}}_{\Z}}(\cL, 4N)
$
to the expressions in the right hand side of the latter
equality, we complete the proof of the Lemma.
\end{proof}

%--------------------------------------------------------------------
\subsection{Explicit densities for 1-cylinder diagrams on the sphere}
\label{ss:one:cylinder}

In this Section we consider separatrix diagrams $\Gamma$ with a
single cylinder in a stratum of quadratic differentials
$\cQ(\nu, -1^{|\nu|+4})$ on the sphere. In the Lemma below
we reproduce formula~(2.2) from Proposition~2.3
in~\cite{DGZZ-Yoccoz} adapting it to the language of the
current paper.

Consider a Jenkins--Strebel meromorphic quadratic
differential with simple poles on $\CP$. Suppose that it
has single maximal horizontal cylinder. The union of all
horizontal saddle connections of such Jenkins--Strebel
differential forms two connected components and each of
the two components is a tree. In other words, a 1-cylinder
separatrix diagram $\Gamma$ on the sphere can be encoded by
a pair of plane trees (one for each boundary component of
the maximal horizontal cylinder). There is no ambiguity in
such definition since the corresponding ribbon graph has
only two boundary components, so there is a single way to
join the boundary components by the cylinder.

Let $\cQ(\nu, -1^{|\nu|+4})$ be a stratum of meromorphic
quadratic differentials with at most simple poles on $\CP$
and let $\Gamma$ be a $1$-cylinder separatrix diagram given
by a pair of plane trees $\left(\cT_{bottom}(\iota),
\cT_{top}(\nu-\iota)\right)$ with profiles $\iota$ and
$\nu - \iota$ respectively, see
Section~\ref{ss:Meanders:and:square:tiled:surfaces:in:a:given:stratum}.
We have seen in Section~\ref{ss:Meanders:and:arc:systems}
that any such separatrix diagram $\Gamma$ is realizable in
$\cQ(\nu, -1^{|\nu|+4})$ and that any $1$-cylinder
separatrix diagram has this form.

Let $\cD^{\textit{horiz}}_{\Z,\Gamma,1}$ be the set of square-tiled surfaces
in $\cQ(\nu, -1^{|\nu|+4})$ having $\Gamma$ as the
separatrix diagram and having a single horizontal band of
squares. Recall that by convention all zeroes and poles
of such square-tiled surfaces are labeled.

\begin{Lemma}
\label{lm:contribution:quadratic}
The number
$\cN_{\cD^{\textit{horiz}}_{\Z,\Gamma,1}}\big(\cQ(\nu, -1^{|\nu|+4}),2N\big)$
of all square-tiled surfaces sharing the fixed realizable
$1$-cylinder separatrix diagram $\Gamma$ and
tiled with a single band of at most $2N$ identical squares has the
following asymptotics when $N\to+\infty$
\begin{equation}
\label{eq:general:contribution:of:D:with:N}
\cN_{\cD^{\textit{horiz}}_{\Z,\Gamma,1}}\big(\cQ(\nu, -1^{|\nu|+4}),2N\big)
=\cyl_1(\Gamma)\cdot \frac{N^d}{2d} + o(N^d)\,,
\end{equation}
where $d = \dim_\C(\cQ(\nu,-1^{|\nu|+4}))$ is given by equation~\eqref{eq:dim} and
\begin{equation}
\label{eq:c1:D:iota}
\cyl_1(\Gamma) =
\cfrac{4}{|\Aut(\Gamma)|}\cdot
\frac{(|\nu|+4)!\cdot\mult_0!\cdot\mult_1!\cdot \mult_2! \cdots}
{\big(|\iota|+\ell(\iota)\big)!
\cdot
\big(|\nu-\iota|+\ell(\nu-\iota)\big)!
}\,.
\end{equation}
\end{Lemma}
\begin{proof}
In this Section we denote by $m$ the number of edges of
$\cT_{bottom}$ and we denote by $n$ the number of edges of
$\cT_{top}$. The numbers $m$ and $n$ can be expressed as
\begin{align*}
m&=|\iota|+\ell(\iota)+1
\\
n&=|\nu-\iota|+\ell(\nu-\iota)+1
\end{align*}
and the dimension $d$ of the stratum satisfies relation $d=m+n$.

Consider any square-tiled surface having the diagram
$\Gamma$ as the diagram of horizontal saddle connections.
Cut it open along all horizontal saddle connections. By
definition of $\Gamma$ it has $m$ pairs of saddle
connections on one boundary component of the cylinder, $n$
pairs of saddle connections on the other boundary component
of the cylinder, and each saddle connection has its twin
on the same boundary component.

The proof now follows line by line the second part of the
proof of the more general Proposition~2.2
in~\cite{DGZZ-Yoccoz}. Note that the parameter $l$ used in
Proposition~2.2 to denote the number of saddle connections
which after the surgery as above appear on both sides of
the cylinder is equal to zero in genus zero. One extra
simplification comes from the fact that in the proof of
Proposition~(2.2) in~\cite{DGZZ-Yoccoz} we sum over various
possible heights of the horizontal cylinder, while in our
context the height of the cylinder equals to the height of
the square: the single horizontal cylinder of square-tiled
surfaces in $\cD^{\textit{horiz}}_{\Z,\Gamma,1}$ is tiled
with a single band of squares. As a result we do not get
the extra factor $\zeta(d)$ present in the original
expression~(2.2) in Proposition~2.2 in~\cite{DGZZ-Yoccoz};
see equation~\eqref{eq:factor:zeta:d} and the Remark below.
\end{proof}

\begin{Remark}
\label{rm:zeta:d}
In this paper we denote by $\cyl_1(\Gamma)$ the coefficient of
the leading term in the asymptotics of the number of
square-tiled surfaces sharing the fixed realizable
$1$-cylinder separatrix diagram $\Gamma$ and tiled with a single
band of at most $2N$ squares. Clearly, this number does not
depend on the size of the identical squares. We tacitly
assumed above that the squares are unit squares, but we
could equally assume that our identical squares have size
$\frac{1}{2} \times \frac{1}{2}$. The latter choice
corresponds to our normalization for the Masur--Veech
volume in Section~\ref{ss:quad:as:invariant:orbifold}.

Thus, $\cyl_1(\Gamma) =
c(\cL^{\mathit{horiz}}_{\Z,\Gamma,1})$ can be seen as the coefficient
of the leading term in the asymptotics of the number of
square-tiled surfaces tiled with at most $2N$ squares of
size $\frac{1}{2} \times \frac{1}{2}$, having a single
horizontal cylinder of the \textit{minimal possible} height
$\frac{1}{2}$, and the separatrix diagram $\Gamma$. In the companion paper~\cite{DGZZ-Yoccoz} we
used a similar notation $c_1(\Gamma)$ for the coefficient in
asymptotics where we made no restriction on the height of
the cylinder. Choosing a different fixed height $h/2$ of
the cylinder, where $h\in\N$, one decreases the asymptotic
number of square-tiled surfaces as above by the factor
$h^{-d}$ (see the proof of Proposition~2.2
in~\cite{DGZZ-Yoccoz} for details). Here
$d=\dim_{\mathbb{C}}\cQ(\nu,-1^{|\nu|+4})$ is given by
formula~\eqref{eq:dim}. Moreover, the following
summation formula holds
\begin{equation}
\label{eq:factor:zeta:d}
c_1(\Gamma) =
c \left( \bigsqcup_{h\in\N} \cL^{\mathit{horiz}}_{\Z,\Gamma,h} \right)
= \sum_{h=1}^{+\infty} c(\cL^{\mathit{horiz}}_{\Z,\Gamma,h})
= \zeta(d) \cdot \cyl_1(\Gamma)\,.
\end{equation}
This is a particular case of a general
formula that holds for any number of cylinders
in any arithmetic invariant orbifold. Compare also with
the Remark~\ref{rk:coprime}.
\end{Remark}

%######################################################################
%######################################################################
%######################################################################
\subsection{Explicit count of square-tiled surfaces in genus $0$}
\label{s:Computations:for:square:tiled:surfaces}
In this section we complete the proof of
Theorem~\ref{th:c1:in:genus:0} proving the remaining
relation~\eqref{eq:c1:answer}. We also prove
Corollaries~\ref{cor:principal} and~\ref{cor:leading:0}
used in the proofs of Theorem~\ref{th:meander:counting} and
of Theorem~\ref{th:trivalent:trees:connected:proportion}
respectively.

Consider a
(generalized) partition
$\nu=[0^{\nu_0} 1^{\nu_1} 2^{\nu_2} \dots]$ of a
natural number $|\nu|$ into the sum of nonnegative integer numbers
(in this Section we allow entries $0$):
$$
|\nu|:=
\underbrace{0+\dots+0}_{\nu_0}+
\underbrace{1+\dots+1}_{\nu_1}+
\underbrace{2+\dots+2}_{\nu_2}+
\dots
$$

The common convention on Masur--Veech volumes of the strata
of meromorphic quadratic differentials with at most simple
poles suggests to label (give names) to all zeroes and
poles. Following notations of Section~\ref{ss:Meanders:and:square:tiled:surfaces:in:a:given:stratum}
denote by $\cP^{\mathit{labeled}}_\nu(N)$ the number
of square-tiled surfaces with labeled zeroes and poles in
the stratum $\cQ(\nu,-1^{|\nu|+4})$ in genus zero tiled
with at most $2N$ identical squares and having a single
horizontal and a single vertical band of squares. It is
easy to see that a square-tiled surface as above cannot
have any symmetries. Convention~\ref{conv:symmetry} on
weights with which we count square-tiled surfaces with
non-labeled zeroes and poles is designed to assure the
following relation between the two counts valid for any
$N\in\N$:
\begin{equation}
\label{eq:P:labeled:through:non}
\cP^{\mathit{labeled}}_\nu(N)=
\left(\prod_{\degofz=0}^\infty \nu_\degofz !\right)
\cdot(|\nu|+4)!\,\cdot\,
\cP_\nu(N)\,,
\end{equation}
where the product above contains, actually, only finite number of factors.

Recall that a type $\iota=[0^{\iota_0} 1^{\iota_1} 2^{\iota_2}
\dots]$ of a plane tree $\cT$ records the number $\iota_\degofz$ of
vertices of valence $\degofz+2$ for $\degofz=0,1,2,\dots$. Note that
in Section~\ref{s:Computations:for:square:tiled:surfaces} we allow to the
tree have several vertices of valence $2$. Recall also that $|\nu|$
denotes the sum of the entries of the partition $\nu=[0^{\nu_0}
1^{\nu_1} 2^{\nu_2}\dots]$; by $\ell(\nu)$ we denote the length of
$\nu$, where this time we count the entries $0$ if any:
\begin{align*}
|\nu|&:=1\cdot\nu_1+2\cdot\nu_2+3\cdot\nu_3+\dots
\\
\ell(\nu)&:=\nu_0+\nu_1+\nu_2+\nu_3+\dots
\end{align*}

Consider a separatrix diagram $\Gamma$ given by a pair of trees
$\cT_{bottom}$ $\cT_{top}$ as in Section~\ref{ss:one:cylinder}.
Defining the automorphism group $\Aut(\Gamma)$ we assume that none
of the vertices,  edges, or boundary components of the ribbon graph
$\Gamma$ is labeled; however, we assume that the orientation of the
ribbons is fixed. Thus
\begin{equation}
\label{eq:order:Gamma:D}
|\Aut(\Gamma)|=|\Aut(\cT_{bottom})|\cdot|\Aut(\cT_{top})|\cdot
\begin{cases}
2&\text{if } \cT_{bottom}\simeq\cT_{top}\\
1&\textit{otherwise}
\end{cases}
\end{equation}
Here $\simeq$ stands for an isomorphism of plane (``ribbon'')
trees.

The following counting Theorem for plane trees is well known; see,
for example, \cite[2, p.6]{Moon}. It is the last element needed for
proof of Theorem~\ref{th:c1:in:genus:0}.
\begin{NNTheorem}
For any partition $\iota=[0^{\iota_0} 1^{\iota_1} 2^{\iota_2}\dots]$
the following expression holds
$$
\sum_{\cT \text{ with profile $\iota$}} \frac{1}{|\Aut(\cT)|}=
\frac
{\big(|\iota|+\ell(\iota)\big)!}
{\big(|\iota|+2\big)!\cdot \iota_0!\cdot \iota_1!\cdot\iota_2!\cdots}\,,
$$
where we sum over all plane trees corresponding to a partition
$\iota$ and $|\Aut(\cT)|$ is the order of the automorphism group
of the tree $\cT$.
\end{NNTheorem}

Now everything is ready to complete the proof of Theorem~\ref{th:c1:in:genus:0}.

\begin{proof}[Completion of the proof of Theorem~\ref{th:c1:in:genus:0}]
It only remains to prove expression~\eqref{eq:c1:answer}.

Combining equation~\eqref{eq:c1:D:iota} with the above Theorem we
conclude that the sum of $\cyl_1(\Gamma)$ over all realizable one-cylinder
separatrix diagrams $\Gamma$ in any given stratum
$\cQ(\nu,-1^{|\nu|+4})$ in genus zero can be expressed as follows
\begin{multline*}
\cyl_1\left(\cQ(\nu,-1^{|\nu|+4})\right)=
\sum_{\Gamma} \cyl_1(\Gamma)=
\frac{1}{2}\sum_{\iota\subset\nu}
\left(
\frac{4\cdot(|\nu|+4)!\cdot\mult_0!\cdot\mult_1!\cdot \mult_2! \cdots}
{\big(|\iota|+\ell(\iota)\big)!
\cdot
\big(|\nu-\iota|+\ell(\nu-\iota)\big)!
}
\right)
\cdot\\ \cdot
\left(
\frac
{\big(|\iota|+\ell(\iota)\big)!}
{\big(|\iota|+2\big)!\cdot \iota_0!\cdot \iota_1!\cdots}
\right)
\cdot
\left(
\frac
{\big(|\nu-\iota|+\ell(\nu-\iota)\big)!}
{\big(|\nu-\iota|+2\big)!\cdot (\nu_0-\iota_0)!\cdot (\nu_1-\iota_1)!\cdots}
\right)
=\\=
2\sum_{\iota\subset\nu}
\binom{|\nu|+4}{|\iota|+2}
\binom{\nu_0}{\iota_0}
\binom{\nu_1}{\iota_1}
\binom{\nu_2}{\iota_2}
\cdots
\end{multline*}
\end{proof}

We complete this section with two Corollaries from
Theorem~\ref{th:c1:in:genus:0}.

\begin{Corollary}
\label{cor:principal}
For the partition $\nu=[1^k]$,
the number $\cP^{\mathit{labeled}}_{[1^k]}(N)$ of
square-tiled surfaces with labeled zeroes and poles in the
stratum $\cQ(1^k,-1^{k+4})$ tiled with at most $2N$
identical squares and having a single horizontal and a
single vertical band of squares, has the following
asymptotics as $N\to+\infty$:
$$
\cP^{\mathit{labeled}}_{[1^k]}(N)=
\cyl_{1,1}\left(\cQ(1^k,-1^{k+4})\right)\cdot\frac{N^{2k+2}}{4k+4} +
o\left(N^{2k+2}\right)\text{ as } N\to+\infty\,,
$$
where
$$
\cyl_{1,1}\left(\cQ(1^k,-1^{k+4})\right)=
\frac{\left(\cyl_1\left(\cQ(1^k,-1^{k+4})\right)\right)^2}
{4\left(\cfrac{\pi^2}{2}\right)^{k+1}}
$$
and
\begin{equation}
\label{eq:c1:principal:answer}
\cyl_1\left(\cQ(1^k,-1^{k+4})\right)=
2\cdot\binom{2k+4}{k+2}
\end{equation}

The number $\cP^{\mathit{labeled}}_{[0,1^k]}(N)$ of square-tiled surfaces
as above with a single marked regular vertex of the tiling
has the following asymptotics as
$N\to+\infty$:
$$
\cP^{\mathit{labeled}}_{[0,1^k]}(N)=
2\cdot \cyl_{1,1}\left(\cQ(1^k,-1^{k+4})\right)\cdot\frac{N^{2k+3}}{4k+6} +
o\left(N^{2k+3}\right)\text{ as } N\to+\infty\,,
$$
\end{Corollary}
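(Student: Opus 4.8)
The plan is to obtain Corollary~\ref{cor:principal} as a direct specialization of Theorem~\ref{th:c1:in:genus:0} to the partition $\nu=1^k$, i.e.\ $\nu_1=k$ and $\nu_\degofz=0$ for $\degofz\neq 1$. The asymptotic \emph{form} $\cP^{\mathit{labeled}}_{1^k}(N)=\cyl_{1,1}\cdot N^d/(2d)+o(N^d)$ is inherited verbatim from the Theorem, so the entire task reduces to three explicit evaluations: the closed form of the sum~\eqref{eq:c1:answer} for $\cyl_1$, the Masur--Veech volume from~\eqref{eq:volume}, and the bookkeeping of the dimension $d$ together with the extra-marked-point stratum $\cQ(1^k,0,-1^{k+4})$.

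First I would evaluate $\cyl_1$. When $\nu=1^k$ the only free index in the multiple sum~\eqref{eq:c1:answer} is $\iota_1$, so it collapses to
\[
\cyl_1\big(\cQ(1^k,-1^{k+4})\big)
= 2\sum_{\iota_1=0}^{k}\binom{k}{\iota_1}\binom{k+4}{\iota_1+2}\,,
\]
using $|\iota|=\iota_1$ and $|\nu|+4=k+4$. The only genuinely non-mechanical point is recognizing this as a Vandermonde convolution: writing $\binom{k}{\iota_1}=\binom{k}{k-\iota_1}$ identifies the sum with the coefficient of $x^{k+2}$ in $(1+x)^k(1+x)^{k+4}=(1+x)^{2k+4}$, which equals $\binom{2k+4}{k+2}$. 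This produces~\eqref{eq:c1:principal:answer}, namely $\cyl_1=2\binom{2k+4}{k+2}$.

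Next I would compute the volume. Formula~\eqref{eq:volume} with $\nu_1=k$ gives $\Vol\cQ(1^k,-1^{k+4})=2\pi^2\,(f(1))^k$, and since $1$ is odd, $f(1)=\tfrac{1!!}{2!!}\,\pi\cdot\pi=\tfrac{\pi^2}{2}$; absorbing $2\pi^2=4\cdot\tfrac{\pi^2}{2}$ yields $\Vol\cQ(1^k,-1^{k+4})=4\big(\tfrac{\pi^2}{2}\big)^{k+1}$, precisely the denominator in the claimed $\cyl_{1,1}$ expression. Feeding this and $\cyl_1$ into the relation $\cyl_{1,1}=(\cyl_1)^2/\Vol$ of Theorem~\ref{th:c1:in:genus:0} settles the $\cyl_{1,1}$ formula. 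For the exponent I use~\eqref{eq:dim}: $\ell(1^k)=k$ and $|1^k|=k$, so $d=\ell(\nu)+|\nu|+2=2k+2$ and $2d=4k+4$, matching $N^{2k+2}/(4k+4)$.

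Finally, for the marked regular vertex I would pass to the generalized partition with $\nu_0=1$, $\nu_1=k$, so that the relevant stratum is $\cQ(1^k,0,-1^{k+4})$. In~\eqref{eq:c1:answer} the new index $\iota_0\in\{0,1\}$ contributes a factor $\sum_{\iota_0=0}^{1}\binom{1}{\iota_0}=2$, while $\binom{k+4}{\iota_1+2}$ is untouched because the zero part changes neither $|\nu|$ nor $|\iota|$; hence $\cyl_1\big(\cQ(1^k,0,-1^{k+4})\big)=2\,\cyl_1\big(\cQ(1^k,-1^{k+4})\big)$. On the volume side $f(0)=\tfrac{0!!}{1!!}\cdot 2=2$, so $\Vol\cQ(1^k,0,-1^{k+4})=2\,\Vol\cQ(1^k,-1^{k+4})$, and substituting both into $\cyl_{1,1}=(\cyl_1)^2/\Vol$ gives the net doubling $\cyl_{1,1}\big(\cQ(1^k,0,-1^{k+4})\big)=2\,\cyl_{1,1}\big(\cQ(1^k,-1^{k+4})\big)$. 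Together with the shifted dimension $d=(k+1)+k+2=2k+3$ (so $2d=4k+6$) this reproduces the stated asymptotics for $\cP^{\mathit{labeled}}_{1^k,0}(N)$. I expect no real obstacle here: the sole piece of mathematical content is the Vandermonde evaluation, and all that remains is keeping straight the even/odd branches of $f$ and the effect of the order-zero entry of the partition.
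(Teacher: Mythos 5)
Your proposal is correct and follows essentially the same route as the paper: specialize Theorem~\ref{th:c1:in:genus:0} to $\nu=1^k$, evaluate the single remaining sum in~\eqref{eq:c1:answer} by the Vandermonde identity $\sum_{\iota_1}\binom{k}{\iota_1}\binom{k+4}{\iota_1+2}=\binom{2k+4}{k+2}$ (the paper cites Gould (3.20) where you derive it from $(1+x)^k(1+x)^{k+4}$), compute the volume from~\eqref{eq:volume}, and handle the marked regular point by observing that adding a zero entry to the partition doubles both $\cyl_1$ and the volume, hence doubles $\cyl_{1,1}$. All bookkeeping of dimensions and the even/odd branches of $f$ matches the paper's own argument.
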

\begin{proof}
By~\eqref{eq:volume} we have
$$
\Vol_1 \cQ_1(1^k,-1^{k+4})=2\pi^2\cdot\left(\frac{\pi^2}{2}\right)^k
=4\cdot\left(\frac{\pi^2}{2}\right)^{k+1}
$$

To prove~\eqref{eq:c1:principal:answer} we apply the following
combinatorial identity to simplify formula~\eqref{eq:c1:answer} in
the particular case when $\nu=[1^k]$:
$$
\sum_{\iota_1=0}^{\nu_1}
\binom{\nu_1}{\iota_1}
\binom{|\nu|+4}{|\iota|+2}
=
\sum_{\iota_1=0}^k \binom{k}{\iota_1} \binom{k+4}{\iota_1+2}
= \binom{2k+4}{k+2}\,,
$$
see (3.20) in~\cite{Gould}.

It remains to prove that
\begin{equation}
\label{eq:tmp}
\cyl_{1,1}\left(\cQ(1^k,0,-1^{k+4})\right)=
2\cdot \cyl_{1,1}\left(\cQ(1^k,-1^{k+4})\right)\,.
\end{equation}
By~\eqref{eq:c11:as:c1:squared:over:Vol} we have
$$
\cyl_{1,1}\left(\cQ(1^k,0,-1^{k+4})\right)=
\frac{\Big(\cyl_1\left(\cQ(1^k,0,-1^{k+4})\right)\Big)^2}
{\Vol_1 \cQ_1(1^k,0,-1^{k+4})}\,.
$$
Equation~\ref{eq:c1:answer} implies that
$$
\cyl_1\left(\cQ(1^k,0,-1^{k+4})\right)=2\cdot
\cyl_1\left(\cQ(1^k,-1^{k+4})\right)
$$
Finally, by~\eqref{eq:volume} we have
$$
\Vol_1 \cQ_1(1^k,0,-1^{k+4})=2\Vol_1 \cQ_1(1^k,-1^{k+4})\,.
$$
and~\eqref{eq:tmp} follows.
\end{proof}

We also prove the following elementary technical Corollary
of Theorem~\ref{th:c1:in:genus:0}.

\begin{Corollary}
\label{cor:leading:0}
Consider a
(generalized)
partition $\nu=[0^{\nu_0} 1^{\nu_1} 2^{\nu_2}\dots]$ and its
subpartition $\nu'=[1^{\nu_1} 2^{\nu_2} \dots]$ obtained by suppressing
all zero entries. The following formulae are valid:
\begin{align}
\label{eq:c1:with:and:without:0}
\cyl_1\big(\nu,-1^{|\nu|+4}\big)
&=
2^{\nu_0}\cdot \cyl_1\big(\nu',-1^{|\nu'|+4}\big)
\\
\label{eq:p1:with:and:without:0}
\prob_1\big(\nu,-1^{|\nu|+4}\big)
&=
\prob_1\big(\nu',-1^{|\nu'|+4}\big)
\,.
\end{align}
\end{Corollary}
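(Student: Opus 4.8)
The plan is to derive both identities directly from the explicit formula~\eqref{eq:c1:answer} for $\cyl_1$ furnished by Theorem~\ref{th:c1:in:genus:0}, combined with the volume formula~\eqref{eq:volume}. The key observation I would exploit is that a zero of order $\degofz=0$ --- a regular marked point --- contributes nothing to the weight $|\iota|=\iota_1+2\iota_2+\cdots$ that governs the binomial $\binom{|\nu|+4}{|\iota|+2}$, and that suppressing the zero entries leaves $|\nu|=|\nu'|$, hence leaves the number $|\nu|+4$ of simple poles unchanged.

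For~\eqref{eq:c1:with:and:without:0} I would write out the multiple sum~\eqref{eq:c1:answer} for $\nu=[0^{\nu_0}1^{\nu_1}2^{\nu_2}\cdots]$ and note that the index $\iota_0$ appears solely in the factor $\binom{\nu_0}{\iota_0}$, being absent from $|\iota|$ and hence from every other term. The $\iota_0$-summation therefore factors out and, by the binomial theorem, evaluates to $\sum_{\iota_0=0}^{\nu_0}\binom{\nu_0}{\iota_0}=2^{\nu_0}$. What remains is, term by term, precisely the sum defining $\tfrac12\,\cyl_1(\cQ(\nu',-1^{|\nu'|+4}))$, since $|\nu|=|\nu'|$ preserves the top entry of the binomial $\binom{|\nu|+4}{|\iota|+2}$. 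Reinstating the global factor $2$ then gives the claimed relation $\cyl_1(\cQ(\nu,-1^{|\nu|+4}))=2^{\nu_0}\,\cyl_1(\cQ(\nu',-1^{|\nu'|+4}))$.

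For~\eqref{eq:p1:with:and:without:0} I would substitute into the definition $\prob_1=\cyl_1/\Vol$. The numerator gains the factor $2^{\nu_0}$ just obtained; for the denominator, formula~\eqref{eq:volume} shows that adjoining $\nu_0$ zero entries multiplies the volume by $\big(f(0)\big)^{\nu_0}$, and the crucial numerical coincidence $f(0)=\tfrac{0!!}{1!!}\cdot\pi^{0}\cdot 2=2$ makes this factor equal to $2^{\nu_0}$ as well. The two powers of $2^{\nu_0}$ then cancel, yielding $\prob_1(\cQ(\nu,-1^{|\nu|+4}))=\prob_1(\cQ(\nu',-1^{|\nu'|+4}))$.

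I do not anticipate any genuine difficulty: the corollary is elementary bookkeeping once Theorem~\ref{th:c1:in:genus:0} is in hand. The only point requiring care --- and the conceptual heart of the statement --- is the matching $f(0)=2$, which is exactly what forces the combinatorial factor $2^{\nu_0}$ coming from $\cyl_1$ to cancel against the volume factor, so that inserting regular marked points leaves the asymptotic meander frequency $\prob_1$ unchanged.
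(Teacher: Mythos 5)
Your proposal is correct and follows essentially the same route as the paper: factor the $\iota_0$-sum out of formula~\eqref{eq:c1:answer} (using that $|\iota|$ does not involve $\iota_0$) to get the factor $2^{\nu_0}$, then cancel it against $\big(f(0)\big)^{\nu_0}=2^{\nu_0}$ in the volume formula~\eqref{eq:volume} when passing to the ratio $\prob_1=\cyl_1/\Vol$. No gaps.
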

\begin{proof}
Note that $|\nu'|=|\nu|$. Similarly, having any subpartition
$\iota'\subset\iota$ obtained from a partition $\iota$ by suppressing
all zero entries we have $|\iota'|=|\iota|$. Thus we can rewrite
formula~\eqref{eq:c1:answer} as
\begin{multline*}
\cyl_1\left(\cQ(\nu,-1^{|\nu|+4})\right)=
2\cdot
\sum_{\iota_0=0}^{\nu_0}
\sum_{\iota_1=0}^{\nu_1}
\sum_{\iota_2=0}^{\nu_2}
\sum_{\dots}^{\dots}
\binom{\nu_0}{\iota_0}
\binom{\nu_1}{\iota_1}
\binom{\nu_2}{\iota_2}
\cdots
\binom{|\nu|+4}{|\iota|+2}
=\\=\!\!
\left(\sum_{\iota_0=0}^{\nu_0}
\binom{\nu_0}{\iota_0}
\right)\cdot
\left(
2\!\sum_{\iota_1=0}^{\nu_1}
\sum_{\iota_2=0}^{\nu_2}
\sum_{\dots}^{\dots}
\binom{\nu_1}{\iota_1}
\!\cdots\!
\binom{|\nu'|+4}{|\iota'|+2}
\right)=
2^{\nu_0} \cyl_1\!\left(\cQ(\nu',-1^{|\nu'|+4})\right)\!,
\end{multline*}
which proves~\eqref{eq:c1:with:and:without:0}.
To prove~\eqref{eq:p1:with:and:without:0} it suffices
to note that by formula~\eqref{eq:volume}, we have
$$
\Vol_1 \cQ_1(\nu,-1^{|\nu|+4})=(f(0))^{\nu_0}
\Vol_1 \cQ_1 (\nu',-1^{|\nu'|+4})= 2^{\nu_0}\Vol_1 \cQ_1(\nu',-1^{|\nu'|+4})\,.
$$
Passing to the ratios
\begin{multline*}
\prob_1\left(\cQ(\nu,-1^{|\nu|+4})\right):=\
\frac{\cyl_1\left(\cQ(\nu,-1^{|\nu|+4})\right)}
{\Vol_1 \left(\cQ_1(\nu,-1^{|\nu|+4})\right)}
\ =\\=\
\frac{\cyl_1\left(\cQ(\nu',-1^{|\nu'|+4})\right)}
{\Vol_1 \left(\cQ_1(\nu',-1^{|\nu'|+4})\right)}
\ =:
\prob_1\left(\cQ(\nu',-1^{|\nu'|+4})\right)
\end{multline*}
we get the desired equation~\eqref{eq:c1:with:and:without:0}.
\end{proof}

%############################################################
%############################################################
%############################################################

\appendix
\section{Lattices in strata of meromorphic quadratic differentials
and associated square-tiled surfaces}
\label{a:Lattices}

The definition of the Masur--Veech volume of strata of
meromorphic quadratic differentials with at most simple
poles involves several normalization conventions. The
mismatch in the choice of one of the conventions is a
constant source of confusion. We describe in
Section~\ref{ss:lattices} various conventions and specify
the one used in the current paper. We discuss the
topological origin of the lattices giving rise to
different normalizations of the volume element and describe
the square-tiled surfaces associated to these lattices.

Throughout this Appendix, by \textit{a stratum
$\cQ(\nu,-1^{|\nu|+4})$ of meromorphic quadratic
differentials in genus zero} we always mean the stratum in
the moduli space of meromorphic quadratic differentials
$(\CP,q)$ such that $q$ has $\nu_0$ marked points, $\nu_i$
zeroes of order $i$ for $i=1,2,\dots$, and $|\nu|+4$ simple
poles and no other poles, where $|\nu|=\nu_1+2\nu_2+\dots$.
Since we use only strata in genus zero in the study of
meanders, we limit our considerations in this Appendix to
genus zero where certain normalization issues do not
manifest. Note that all strata in genus zero are nonempty
and connected.

%------------------------------------------------------------
\subsection{Lattices in strata of meromorphic quadratic differentials}
\label{ss:lattices}
By convention, all zeroes and simple poles of Abelian or
quadratic differentials are always numbered (labeled). Thus,
expressing Masur--Veech volumes through count of
square-tiled surfaces one has to either label conical
singularities on square-tiled surfaces (which is uncommon),
or apply necessary normalization by the product of
factorials as in~\eqref{eq:P:labeled:through:non}.

Next, there are two particularly natural ways to define the
integer lattice in period coordinates $H^1_-(\hat X,
\hat\Sigma;\C)$ (the period coordinates for the strata of
quadratic differentials were defined at the end of
Section~\ref{ss:background:flat:surf}).
\smallskip

One can either choose as the ``integer lattice'' the following set:
\begin{equation}
\label{eq:half:int:lattice}
\left\{\text{elements of }
H^1_-(\hat X, \hat\Sigma;\C)
\text{ taking values in }
\Z\oplus i\Z
\text{ on } H_1^-(\hat X, \hat\Sigma;\Z)
\right\}
\,.
\end{equation}
or, alternatively, one can choose
as the ``integer lattice'' the set defined as follows:
\begin{equation*}
%\label{eq:int:lattice}
H^1_-(\hat X, \hat\Sigma;\C)\cap
H^1(\hat X,\hat\Sigma;\Z\oplus i\Z)\,.
\end{equation*}

The difference between the two choices reveals itself in
the linear holonomy along saddle connections joining two
distinct zeroes. Under the first convention the linear
holonomy along such saddle connections belongs to the half
integer lattice $\frac{1}{2}\Z\oplus \frac{i}{2}\Z$ while
under the second convention it belongs to the integer
lattice $\Z\oplus i\Z$.

Actually, the set $\hat\Sigma$ admits alternative natural
definitions; the choice of one of them is a matter of
another convention. In the construction of the canonical
double cover, the preimages of the simple poles of $q$ on
$X$ become regular points of $\omega$ on the canonical
double cover $\hat X$. One can either choose to mark the
resulting regular points and consider them as part of the
data of the cover, or not. In other words, one has to make
a choice whether to include these points in $\hat\Sigma$,
or not. We reserve notation $\hat\Sigma$ for the set where
all these marked points are included, and we use notation
$\hat\Sigma'$ for the set which does not contain preimages
of simple poles. In genus zero, when simple poles are
always present, $\hat\Sigma'$ is always a proper subset of
$\hat\Sigma$. For example, depending on this choice, the
stratum $\cQ(1^2, -1^6)$ is realized as an invariant
arithmetic suborbifold either in $\cH(0^6, 2^2)$ or in
$\cH(2^2)$.

Consider now the following lattices:
\begin{align*}
  % \label{eq:lattices:Sigma:0}
\L&:=
H^1_-(\hat X, \hat\Sigma;\C)\cap
H^1(\hat X,\hat\Sigma;\Z\oplus i\Z)\,,
\\
  % \label{eq:lattices:Sigma:1}
\L'&:=
H^1_-(\hat X, \hat\Sigma';\C)\cap
H^1(\hat X,\hat\Sigma';\Z\oplus i\Z)\,,
\\
  % \label{eq:lattices:Sigma:1:2}
\tfrac{1}{2}\L&:=
H^1_-(\hat X, \hat\Sigma;\C)\cap
H^1(\hat X,\hat\Sigma;\tfrac{1}{2}\Z\oplus \tfrac{i}{2}\Z)
\,.
\end{align*}

\begin{Lemma}
\label{lm:isomorphism}
Consider a stratum $\cQ(\nu,-1^{|\nu|+4})$ of meromorphic
quadratic differentials in genus zero. The natural map
\begin{equation}
\label{eq:map:of:H1}
H^1_-(\hat X,\hat\Sigma;\C)\to
H^1_-(\hat X,\hat\Sigma';\C)
\end{equation}
induced by the inclusion $\hat\Sigma'\subset\hat\Sigma$,
is an isomorphism of vector spaces.
\end{Lemma}
\begin{proof}
We start by constructing a convenient set of cycles,
which, depending on interpretation, provides a
basis in both $H_1^-(\hat X,\hat\Sigma;\Z)$ and
$H_1^-(\hat X,\hat\Sigma';\Z)$. This will prove, in
particular, that~\eqref{eq:map:of:H1} is an isomorphism.

Consider the following oriented non self-intersecting path
$\rho$ on the sphere $X=\CP$ at the base of the cover $\hat
X\to X$. The path $\rho$ starts at a zero or at a marked
point of $\Sigma$, then it passes through all other zeroes
and marked points of $\Sigma$, and only then $\rho$ passes
through all simple poles but one. We consider $\rho$ as a
curvilinear broken line with vertices in $\Sigma$. For
every oriented segment $\gamma_j=[P,Q]$ of this broken line
(where $P,Q$ are two points of $\Sigma$ passed by $\rho$
consecutively) consider the two preimages $\gamma_j',
\gamma_j''$ of $\gamma$ on $\hat X$ endowed with the
orientation induced from $\gamma_j$ and consider the cycle
$[\hat\gamma_j]:=[\gamma_j']-[\gamma_j'']\in H_1^-(\hat
X,\hat\Sigma;\Z)$. It is easy to see that the resulting
collections of cycles forms a basis in $H_1^-(\hat
X,\hat\Sigma;\Z)$.

By assumption, $X$ has genus zero, and quadratic
differential $q$ has only simple poles, so $\Sigma$ always
contains at least four poles of $q$. Note that we
intentionally omitted  one pole in our construction.
Extending the broken line to the remaining pole and
completing our collection of cycles with the resulting
extra cycle $\hat\gamma_0=[\gamma'_0]-[\gamma''_0]$ in
$H_1^-(\hat X,\hat\Sigma;\Z)$ we would get a collection of
cycles satisfying a linear relation: the sum of the cycles
(taken with appropriate signs) corresponding to all
segments of the extended broken line is zero in $H_1^-(\hat
X,\hat\Sigma;\Z)$.

We explain now why the constructed collection of paths
provides a basis in $H_1^-(\hat
X,\hat\Sigma';\C)$ as well. Note that all simple poles are
branch points of the cover. Recall that $\rho$ visits
first all zeroes and marked points of $\Sigma$ and only
then passes through simple poles. Thus, all cycles
$[\hat\gamma_j]:=[\gamma_j']-[\gamma_j'']$, for
$j=1,\dots,\ell(\nu)-1$ are well-defined cycles in $H_1^-(\hat
X,\hat\Sigma';\Z)$.

Let $\gamma_{\ell(\nu)}=[P,Q]$ be the segment joining the last
zero (or a marked point) $P\in\Sigma$ to the first simple
pole $Q\in\Sigma$. Consider that path
$\hat\gamma_{\ell(\nu)}:=\gamma'_{\ell(\nu)}-\gamma''_{\ell(\nu)}$
on $\hat X$ which first follows $\gamma_{\ell(\nu)}'$ and
then, when it arrives to the preimage of the simple pole
$Q$, it follows $\gamma_{\ell(\nu)}''$ in the opposite
direction. We get a connected path
$\hat\gamma_{\ell(\nu)}$ which is closed if $P$ is a
zero of odd degree. The path $\hat\gamma_{\ell(\nu)}$ is a
segment with endpoints at two preimages $P',P''$ of $P$
when $P$ is a regular point of the cover $p:\hat X\to X$
(zero of even degree of $q$ or a marked point). Note that
in the latter case both $P'$ and $P''$ belong to
$\hat\Sigma'$, so in both cases $\hat\gamma_{\ell(\nu)}$ is
a well-defined cycle in $H_1^-(\hat X,\hat\Sigma';\Z)$.
Similarly, any segment $\gamma=[Q_j, Q_{j+1}]$ of $\rho$
joining two simple poles defines a closed cycle
$[\hat\gamma]=[\gamma'-\gamma'']$ in $H_1^-(\hat
X,\hat\Sigma';\Z)$. It is immediate to see that the
resulting cycles also form a basis, but this time already
in $H_1^-(\hat X,\hat\Sigma';\Z)$. This proves, in
particular, that the natural map~\eqref{eq:map:of:H1}
induced by the inclusion $\hat\Sigma'\subset\hat\Sigma$ is
an isomorphism of vector spaces.
\end{proof}

\begin{Lemma}
\label{lm:A1:coinside:with:1:2:L}
Consider a stratum $\cQ(\nu,-1^{|\nu|+4})$ of meromorphic
quadratic differentials in genus zero. The lattice
$\tfrac{1}{2}\L$ coincides with the lattice defined
by~\eqref{eq:half:int:lattice}:
\begin{equation*}
\tfrac{1}{2}\L=
 % \label{eq:half:int:lattice}
\left\{\text{elements of }
H^1_-(\hat X, \hat\Sigma;\C)
\text{ taking values in }
\Z\oplus i\Z
\text{ on } H_1^-(\hat X, \hat\Sigma;\Z)
\right\}
\,.
\end{equation*}
\end{Lemma}
\begin{proof}
Consider the basis of cocycles $\alpha_1,\dots,\alpha_d$ in
$H^1_-(\hat X, \hat\Sigma;\C)$ dual to the basis of cycles
$\hat\gamma_i$ in $H_1^-(\hat X, \hat\Sigma;\Z)$
constructed in the proof of Lemma~\ref{lm:isomorphism}. By
definition of a dual basis, we have
$\alpha_i(\hat\gamma_j)=\delta_{i,j}$. This implies that
the collection $\alpha_1,\dots,\alpha_d,
i\alpha_1,\dots,i\alpha_d$ serves as a basis of the
lattice~\eqref{eq:half:int:lattice}. Let us show now that
$\alpha_1,\dots,\alpha_d, i\alpha_1,\dots,i\alpha_d$ is
also a basis of the lattice $\tfrac{1}{2}\L$.

Denote by $\tau$ the canonical involution of $\hat X$
associated to the ramified double cover $p:\hat X\to X$.
For any $c\in H_1(\hat X, \hat\Sigma;\C)$ and for
any $\theta\in H^1_-(\hat X, \hat\Sigma;\C)$ we have
\begin{equation}
\label{eq:involution}
\theta(\tau_\ast(c))=
(\tau^\ast\theta)(c)=
-\theta(c)\,,
\end{equation}
since by definition, the subspace $H^1_-(\hat X,
\hat\Sigma;\C)$ is anti-invariant with respect to the
involution $\tau^\ast$. By construction,
$\hat\gamma_j=[\gamma'_j]-[\gamma''_j]=
[\gamma'_j]-\tau_\ast[\gamma'_j]$. Thus,
by~\eqref{eq:involution}
$$
\alpha_i([\hat\gamma'_j])=-\alpha_i([\hat\gamma''_j])=\tfrac{1}{2}\delta_{i,j}
$$
for any $i,j\in\{1,\dots,d\}$.
Since $\sum_{j=0}^d [\hat\gamma_j]=0$, we conclude that
$$
\alpha_i([\hat\gamma'_0])=-\alpha_i([\hat\gamma''_0])\in\tfrac{1}{2}\Z\ \text{for }i=1,\dots,d\,.
$$
The relative cycles $[\hat\gamma'_0],[\hat\gamma''_0],
[\hat\gamma'_1],[\hat\gamma''_1],\dots,
[\hat\gamma'_d],[\hat\gamma''_d]$ generate $H_1(\hat X,
\hat\Sigma;\Z)$. We conclude that all basic cocycles
$\alpha_j$ and $i\alpha_j$, where $j=1,\dots,d$, of the
lattice~\eqref{eq:half:int:lattice} in $H^1_-(\hat X,
\hat\Sigma;\C)$ take values in
$\frac{1}{2}\Z\oplus\frac{i}{2}\Z$ on $H_1(\hat X,
\hat\Sigma;\Z)$, which proves the inclusion
$$
\tfrac{1}{2}\L
\supseteq
\left\{\text{elements of }
H^1_-(\hat X, \hat\Sigma;\C)
\text{ taking values in }
\Z\oplus i\Z
\text{ on } H_1^-(\hat X, \hat\Sigma;\Z)
\right\}\,.
$$
Similar consideration proves the inclusion in the other
direction.
\end{proof}

The Lemma below shows that the
lattice~\eqref{eq:half:int:lattice} is independent of the
choice of $\hat\Sigma$ or $\hat\Sigma'$.
\begin{Lemma}
\label{lm:favorite:lattice}
Consider a stratum $\cQ(\nu,-1^{|\nu|+4})$ of meromorphic
quadratic differentials in genus zero.
The convention on the choice of $\hat\Sigma$ or
$\hat\Sigma'$ does not affect the
lattice~\eqref{eq:half:int:lattice}: the discrete subsets
of $H^1_-(\hat X, \hat\Sigma;\C)$ and of
$H^1_-(\hat X, \hat\Sigma';\C)$ defined
by~\eqref{eq:half:int:lattice} are in bijective
correspondence under the natural map $H^1_-(\hat X,
\hat\Sigma;\C)\to H^1_-(\hat X, \hat\Sigma';\C)$.
\end{Lemma}
\begin{proof}
By Lemma~\ref{lm:isomorphism}, the natural linear
map~\eqref{eq:map:of:H1} induced by the inclusion
$\hat\Sigma'\subset\hat\Sigma$ is an isomorphism of vector
spaces. Consider a basis of cocycles in each of these
spaces dual to the basis of integer cycles $\hat\gamma_j$,
$j=1,\dots,d$, in $H_1^-(\hat X,\hat\Sigma;\Z)$ and in
$H_1^-(\hat X,\hat\Sigma';\Z)$ respectively, constructed in
the proof of Lemma~\ref{lm:isomorphism}. The lattices
induced by these bases are exactly the discrete subsets of
$H^1_-(\hat X, \hat\Sigma;\C)$ and of $H^1_-(\hat X,
\hat\Sigma';\C)$ defined by~\eqref{eq:half:int:lattice}.
Moreover, it follows from the construction of the two bases
that the isomorphism~\eqref{eq:map:of:H1} sends one to the
other.
\end{proof}

Now we are ready to prove
inclusions~\eqref{eq:three:lattices} of lattices defined
above and compute the resulting indices of sublattices.

\begin{Lemma}
\label{lm:indices:of:sublattices}
Consider a stratum $\cQ(\nu,-1^{|\nu|+4})$ of meromorphic
quadratic differentials in genus zero.
The isomorphism~\eqref{eq:map:of:H1} induces the following chain of inclusions
of lattices:
\begin{equation}
\label{eq:three:lattices}
\L\subset
\L'\subset
\tfrac{1}{2}\L
\,.
\end{equation}
The indices of these sublattices
satisfy the following relations:
\begin{align}
\label{eq:index:marked:nonmarked}
|\tfrac{1}{2}\L:\L'|&=4^{\ell(\nu)-1}
\\
\label{eq:index:nonmarked:marked}
|\L':\L|&=4^{|\nu|+3}
\,.
\end{align}
\end{Lemma}

\begin{Remark}
\label{rm:4:power:d} Note that the lattice $\frac{1}{2}\L$
can be obtained from the lattice $\L$ by subdividing the
mesh by the factor of $2$, or, equivalently, by applying
homothety with coefficient $\frac{1}{2}$ to the lattice
$\L$. Thus, the index $|\frac{1}{2}\L:\L|$ equals
$2^{2d}=4^d$, where
$d=\dim_{\C}\cQ(\nu,-1^{|\nu|+4})=\ell(\nu)+|\nu|+2$, so
that
\begin{equation}
\label{eq:index:double}
|\tfrac{1}{2}\L:\L|=4^{\ell(\nu)+|\nu|+2}\,.
\end{equation}
The Lemma above computes the two remaining indices.
\end{Remark}

\begin{proof}[Proof of Lemma~\ref{lm:indices:of:sublattices}]
The fact that the map~\eqref{eq:map:of:H1} is a linear
isomorphism of vector spaces allows us to consider the
lattices $\L$ and $\tfrac{1}{2}\L$ as sublattices of
$H^1_-(\hat X,\hat\Sigma';\C)$ and allows to consider the
lattice $\L'$ as a sublattice of $H^1_-(\hat
X,\hat\Sigma;\C)$.

Passing from the bases of $H_1^-(\hat
X,\hat\Sigma;\Z)$ and $H_1^-(\hat X,\hat\Sigma';\Z)$ to
the dual bases in $H^1_-(\hat X,\hat\Sigma;\Z)$ and
in $H^1_-(\hat X,\hat\Sigma';\Z)$ respectively, we use the
resulting cohomology classes to construct bases of the corresponding
lattices. In this way we prove the
inclusions~\eqref{eq:three:lattices}. It remains to compute
the indices of these sublattices in the ambient lattices.

Let $\gamma_1,\dots,\gamma_{d}$ be the consecutive segments
of the path $\rho$ constructed in the proof of
Lemma~\ref{lm:isomorphism}. Here
$$
d=\dim_{\C} H^1_-(\hat X,\hat\Sigma;\C)
=\dim_{\C} H^1_-(\hat X,\hat\Sigma';\C)
=\ell(\nu)+|\nu|+2\,.
$$
By construction, the first $\ell(\nu)-1$ segments of $\rho$
have the endpoints at zeroes and at marked points of
$\Sigma$, while the remaining $|\nu|+3$ segments have
the endpoints at $|\nu|+3$ (i.e. all but one) simple poles.

Passing from the lattice $\frac{1}{2}\L$ to its sublattice
$\L'$ we impose the extra condition that the value of the
cocycles in the sublattice on each individual relative
cycle $[\gamma_j'], [\gamma_j'']$ belongs to $\Z\oplus i\Z$
for $j=1,\dots,\ell(\nu)-1$ (i.e. for those cycles which
involve points from $\hat\Sigma'$). The cocycles in the
sublattice $\frac{1}{2}\L$ take values in
$\frac{1}{2}\Z\oplus \frac{i}{2}\Z$ on these cocycles. Note
that since these cocycles belong to the subspace $H^1_-(\hat
X,\hat\Sigma;\C)$ anti-invariant with respect to the
hyperelliptic involution, their values on all pairs of
symmetric cycles $[\gamma_j']$ and $[\gamma_j'']$ are
coherent: both real and imaginary parts are simultaneously
integer or half-integer. This gives the index
$|\tfrac{1}{2}\L:\L'|=2^{2(\ell(\nu)-1)}$ as claimed
in~\eqref{eq:index:marked:nonmarked}. Extra factor $2$ in
the exponent of $2$ comes from the fact that we have to
take into account both real and imaginary parts.

To prove~\eqref{eq:index:nonmarked:marked} one can either
combine~\eqref{eq:index:double}
and~\eqref{eq:index:marked:nonmarked} or notice directly
that passing from the lattice $\L'$ to its sublattice $\L$
we impose the extra condition that the value of the
cocycles in the sublattice $\L$ on each individual relative
cycle $[\gamma_j'], [\gamma_j'']$ belongs to $\Z\oplus i\Z$
for $j=\ell(\nu),\dots,\ell(\nu)+|\nu|+2$, while the
cocycles in the ambient lattice $\L'$ evaluated
on  on these cycles take values in $\frac{1}{2}\Z\oplus
\frac{i}{2}\Z$.
\end{proof}

Consider the Masur--Veech volume elements on a stratum
$\cQ(\nu,-1^{|\nu|+4})$ of meromorphic quadratic
differentials in genus zero corresponding to the above
lattices. We have proved in Lemma~\ref{lm:isomorphism} that
we can use any of the two isomorphic vector spaces
\begin{equation}
\label{eq:isomorphic:cohomology}
H^1_-(\hat X,\hat\Sigma;\C)
\simeq
H^1_-(\hat X,\hat\Sigma';\C)
\end{equation}
as period coordinates in $\cQ(\nu,-1^{|\nu|+4})$. Recall
that there is a natural one-parameter family of volume
elements in any finite-dimensional vector space $V$; any
two volume elements in this family differ by a constant
factor. Any lattice of maximal rank in $V$ determines the
distinguished normalization of the volume element by
condition that the volume of the fundamental domain of the
lattice is equal to $1$. Considering one of the vector
spaces in~\eqref{eq:isomorphic:cohomology} and one of the
three lattices~\eqref{eq:three:lattices}, we get three
different normalizations of the Masur--Veech volume element
on $\cQ(\nu,-1^{|\nu|+4})$. Denote by
$\Vol^{\frac{1}{2}\L}_1\cQ_1(\nu,-1^{|\nu|+4})$,
$\Vol^{\L'}_1\cQ_1(\nu,-1^{|\nu|+4})$,
$\Vol^{\L}_1\cQ_1(\nu,-1^{|\nu|+4})$ the volumes of the
stratum $\cQ(\nu,-1^{|\nu|+4})$ with respect to these
volume elements.

\begin{Corollary}
The Masur--Veech volumes of a stratum
$\cQ(\nu,-1^{|\nu|+4})$ of meromorphic differentials in
genus zero induced by the
lattices~\eqref{eq:three:lattices} satisfy the following
relations:
\begin{align*}
\Vol^{\frac{1}{2}\L}_1\cQ_1(\nu,-1^{|\nu|+4})
:\Vol^{\L'}_1\cQ_1(\nu,-1^{|\nu|+4})
&=4^{\ell(\nu)-1}\,,
\\
\Vol^{\L'}_1\cQ_1(\nu,-1^{|\nu|+4})
:\Vol^{\L}_1\cQ_1(\nu,-1^{|\nu|+4})
&=4^{|\nu|+3}\,.
\end{align*}
\end{Corollary}
\begin{proof}
The Masur--Veech volume element is defined as linear volume
element in period coordinates normalized by an appropriate
lattice. Hence, passing from a lattice to a sublattice we
change the normalization of the Masur--Veech measure by the
constant factor equal to the index of the sublattice.
\end{proof}

%--------------------------------------------------------------------
\subsection{Degrees of correspondences induced by
canonical double covers}
\label{subsec:double:cover:correspondences}
The comparison of lattices in the previous Section was
performed in period coordinates, that is locally. This
local computation allowed us to keep track of the change of
the Masur--Veech volume $\Vol_1\cQ_1(\nu,-1^{|\nu|+4})$
when passing from one lattice normalization to the other.
Up to now we always stayed on $\cQ(\nu,-1^{|\nu|+4})$.

Recall that the canonical ramified double cover
construction associates to every stratum of meromorphic
quadratic differentials with at most simple poles an
invariant arithmetic suborbifold $\cL$ in the corresponding
stratum of Abelian differentials, where the preimages of
all simple poles are marked, see
Section~\ref{ss:quad:as:invariant:orbifold}. The invariant
arithmetic suborbifold $\cL$ is endowed with the natural
cover $P: \cL \to \cQ(\nu,-1^{|\nu|+4})$.

Alternatively, we can apply the double cover construction
without marking the preimages of simple poles. In this way
we obtain an alternative invariant arithmetic orbifold
$\cL'$ in a different stratum of Abelian differentials. For
example, under the first construction we associate to the
stratum $\cQ(1^2,-1^6)$ an invariant arithmetic suborbifold
$\cL$ in $\cH(2^2,0^6)$ while under the second construction
we associate to the same stratum $\cQ(1^2,-1^6)$ an
invariant arithmetic suborbifold $\cL'$ in $\cH(2^2)$. The
resulting invariant arithmetic orbifolds $\cL$ and $\cL'$
are related by the natural forgetful map $F: \cL \to \cL'$
forgetting the preimages of the simple poles. The choice
between $\cL$ and $\cL'$ corresponds to the choice between
the subsets $\hat\Sigma$ or $\hat\Sigma'$ in the local
computations of Section~\ref{ss:lattices}.

Recall that we have a canonical definition~\eqref{eq:Vol:L}
of the Masur--Veech volume of any arithmetic invariant
orbifold. In this Section we compare the Masur--Veech
volumes $\Vol_1\cQ_1(\nu,-1^{|\nu|+4})$, $\Vol_1\cL_1$
and $\Vol_1 \cL'_1$.

There is a natural correspondence between
$\cQ(\nu,-1^{|\nu|+4})$ and $\cL$. Each zero
of odd order of $q$ and each simple pole of $q$ has a
single preimage on $\hat X$. However, each zero of even
order of $q$ and each marked point has two distinct
preimages and both preimages are zeroes (respectively
marked points) of $\omega$. We denote by
$$
\ell(\nu_{\mathit{even}}):=\nu_0+\nu_2+\nu_4+\dots
$$
the total number of marked points and of
zeroes of even orders.
Note also that the triples $(\hat X,\omega, \hat\Sigma)$
and $(\hat X,-\omega, \hat\Sigma)$ represent the same point
of the stratum of Abelian differentials with
\textit{unlabeled} zeroes. These considerations show that
we have $2^{\ell(\nu_{\mathit{even}})}$ ways of labeling
the preimages of zeros of even degrees.
A priori, the resulting
set $\widehat{\cQ}(\nu,-1^{|\nu|+4})$ of labeled
Abelian differentials may be
not connected. By definition, $\cL$ denotes
any of the isomorphic connected
components of $\widehat{\cQ}(\nu,-1^{|\nu|+4})$.

\begin{Lemma}
\label{lem:cL:over:cQ}
Consider a stratum $\cQ(\nu,-1^{|\nu|+4})$ of meromorphic
quadratic differentials in genus zero.
The subset $\widehat{\cQ}(\nu,-1^{|\nu|+4})$
of Abelian differentials obtained
by applying the canonical double cover construction
to all $(X,q)$ in $\cQ(\nu,-1^{|\nu|+4})$ with marked
preimages of simple poles
is connected.
\end{Lemma}
\begin{proof}
The projectivization
$\operatorname{P}\!\cQ(\nu,-1^{|\nu|+4})=\cQ(\nu,-1^{|\nu|+4})/\C^{\ast}$
of any stratum $\cQ(\nu,-1^{|\nu|+4})$ in genus zero is
isomorphic to the space of configurations of $\ell(\nu) +
|\nu| + 4$ distinct labeled points on $\CP$ (the locations
of the zeros, of the marked points and of the simple poles
of the quadratic differential $q$). In particular, given
any zero of $q$ of even order (where we consider the marked
points as zeroes of $q$ of order zero) one can always
construct a closed trajectory in $\cQ(\nu, -1^{|\nu|+4})$
by moving this zero close to a simple pole, then around
this pole and then backtracking the zero to the initial
position following backwards the path which we used to move
it close to the simple pole. Lifting the resulting loop in
$\cQ(\nu, -1^{|\nu|+4})$ to a path in $\widehat{\cQ}(\nu,
-1^{|\nu|+4})$ we exchange the labels of the two preimages
of this zero of even order. This construction implies
connectedness of $\widehat{\cQ}(\nu, -1^{|\nu|+4})$.
\end{proof}

Considering the correspondence between
$\cQ(\nu,-1^{|\nu|+4})$ and $\cL'$ we observe the following
additional phenomenon. The set $\hat\Sigma$ can be
reconstructed from $\hat\Sigma'$ as an unlabeled set, by adding all
ramification points of $\hat X$ which are not yet in
$\hat\Sigma'$. However, since the preimages of the simple
poles are not marked in $\hat\Sigma'$ and thus not labeled,
the information on the labels of simple poles in $(X,q,
\Sigma)$ is lost when passing to the canonical double cover
$(\hat X,\omega,\hat\Sigma')$. Thus, the points of
$\cQ(\nu,-1^{|\nu|+4})$ and the points of $\cL'$ are in the
natural $(|\nu|+4)!:2^{\ell(\nu_{\mathit{even}})}$
correspondence, where $|\nu|+4$ is the number of simple
poles.

Let $p:\hat X\to X$ be the canonical ramified double cover
such that $p^\ast q=\omega^2$. Note that the flat area
defined by the Abelian differential $\omega$ on $\hat X$ is
twice the area defined by the initial quadratic
differential $q$ on $X$. Thus, under the correspondence
of the stratum
$\cQ(\nu,-1^{|\nu|+4})$ of quadratic differentials and the
suborbifold in the associated stratum of Abelian
differentials, the hypersurface $\cL_1$ corresponds to the
subset of quadratic differentials $(X,q)$ of flat area
$\frac{1}{2}$. Following the tradition, we denote this locus by
$\cQ_1(\nu,-1^{|\nu|+4})$, i.e.
\begin{equation}
\label{eq:def:Q:1}
\cQ_1(\nu,-1^{|\nu|+4}):=
\left\{
(X,q)\in\cQ(\nu,-1^{|\nu|+4})
\ \big|\
\Area(X,q)=\frac{1}{2}
\right\}\,.
\end{equation}
Given a Masur--Veech volume element $d\!\Vol$
on $\cQ(\nu,-1^{|\nu|+4})$ we define
\begin{equation}
\label{eq:def:Vol:Q:1}
\Vol_1\cQ_1(\nu,-1^{|\nu|+4}):=
2d\cdot \Vol C_1\cQ_1(\nu,-1^{|\nu|+4})\,,
\end{equation}
where the cone $C_1\cQ_1(\nu,-1^{|\nu|+4})$ is defined as
$$
C_1\cQ_1(\nu,-1^{|\nu|+4}):=
\left\{
(X,q)\in\cQ(\nu,-1^{|\nu|+4})
\ \big|\
\Area(X,q)\le\frac{1}{2}
\right\}\,.
$$
The numerical values in~\eqref{eq:volume} were obtained
using this definition and the Masur--Veech volume element
on $\cQ(\nu,-1^{|\nu|+4})$ normalized by the
lattice~\eqref{eq:half:int:lattice}, which by
Lemma~\ref{lm:A1:coinside:with:1:2:L} this lattice
coincides with the lattice $\tfrac{1}{2}\L$.

Consider the locus $\cL'$ of Abelian differentials
consisting of all canonical double covers
of all $(X,q)$ in $\cQ(\nu,-1^{|\nu|+4})$,
where the preimages of simple poles are not marked. Let $P: \cL \to
\cQ(\nu, -1^{|\nu|+4})$ be the natural cover and let $F:
\cL \to \cL'$ be the forgetful map (forgetting the marked
points that are preimages of poles). We complete
this section with the following Lemma summarizing
the above considerations.

\begin{Lemma}
\label{lm:Vol:star} Consider any normalization
$d\!\Vol^\ast$ of the Masur--Veech volume element on a
stratum $\cQ(\nu, -1^{|\nu|+4})$ of meromorphic quadratic
differentials in genus zero and
consider the volume element on $\cL$ induced by
$d\!\Vol^\ast$ via the cover $P:\cL\to \cQ(\nu,
-1^{|\nu|+4})$. The resulting volume element on $\cL$ can
be induced from a volume element $d\!\Vol^\ast$ on $\cL'$
by means of the forgetful map $F:\cL\to\cL'$. The volumes
$\Vol^\ast_1\cL_1$, $\Vol^\ast_1\cL'_1$ and
$\Vol^\ast_1\cQ_1(\nu,-1^{|\nu|+4})$ satisfy the following
relations:
\begin{align}
\label{eq:Vol:L:Sigma:0:Vol:Q}
\Vol^\ast_1\cL_1&:=2^{\ell(\nu_{\mathit{even}})}
\cdot\Vol^\ast_1\cQ_1(\nu,-1^{|\nu|+4})
\\
\label{eq:Vol:L:Sigma:1:Vol:Q}
\Vol^\ast_1\cL'_1&:=
\frac{2^{\ell(\nu_{\mathit{even}})}}{(|\nu|+4)!}
\cdot\Vol^\ast_1\cQ_1(\nu,-1^{|\nu|+4})\,.
\end{align}
\end{Lemma}
\begin{proof}
The proof is essentially reduced to the observation that
$\deg(P) = 2^{\ell(\nu_{even})}$ and  that $\deg(F) =
(|\nu|+4)!$.
\end{proof}

%--------------------------------------------------------------------
\subsection{Lattice  points as  square-tiled  surfaces}
\label{subsec:lattice:square:tiled:pillowcase}

In this Section we discuss the square-tiling in the
flat metric associated to lattice points $(X,q,\Sigma)$,
$(\hat X,\omega,\hat\Sigma)$, $(\hat
X,\omega,\hat\Sigma')$ for the lattices $\tfrac{1}{2}\L$, $\L$
and $\L'$ respectively.

We have seen that lattice points in period coordinates of
an invariant arithmetic suborbifold represent square-tiled
surfaces. Speaking of a square-tiling we always assume that
all the squares of the tiling are identical and
\textit{polarized}, i.e. we know which pair of opposite
sides of each square is horizontal; the remaining pair of
sides is vertical. Gluing the squares we identify sides to
sides respecting the polarization. Constructing a
square-tiled surface representing an Abelian differential
we impose additional translation structure.

We always consider only those square tilings of
$(X,q,\Sigma)$ (respectively of $(\hat X,q,\hat\Sigma)$,
$(\hat X,q,\hat\Sigma')$) for which all the points of the
discrete subset $\Sigma$ (respectively of $\hat\Sigma$,
$\hat\Sigma'$) are located at vertices of the squares.

\begin{Lemma}
\label{lm:lattices:as:square:tilings}
Consider a stratum $\cQ(\nu,-1^{|\nu|+4})$ of meromorphic
quadratic differentials in genus zero.

\begin{enumerate}
\item
$\tfrac{1}{2}\L$-lattice points in
$\cQ(\nu,-1^{|\nu|+4})$ are exactly those triples
$(X,q,\Sigma)$ for which the induced (polarized) flat
metric admits square-tiling by
$\tfrac{1}{2}\times\tfrac{1}{2}$ squares. The induced
normalization $d\!\Vol^{\frac{1}{2}\L}$ of the
Masur--Veech volume element on $\cQ(\nu,-1^{|\nu|+4})$
combined with definitions~\eqref{eq:def:Q:1}
and~\eqref{eq:def:Vol:Q:1} provides the same value of the
Masur--Volume $\Vol_1\cQ_1(\nu,-1^{|\nu|+4})$ as~\eqref{eq:easy:normalization:Vol:cQxi}.

\item
$\L$-lattice points in $\cQ(\nu,-1^{|\nu|+4})$ are
exactly those triples $(X,q,\Sigma)$ for which the
induced (polarized) flat metric admits square-tiling by
unit squares. The induced Masur--Veech volume element
$d\!\Vol^{\L}$ on $\cQ(\nu,-1^{|\nu|+4})$ satisfies:
$$
d\!\Vol^{\L}=4^d\cdot
d\!\Vol^{\frac{1}{2}\L}
$$

\item
$\L$-lattice points in the arithmetic invariant
suborbifold $\cL$ are exactly those triples $(\hat
X,\omega,\hat\Sigma)$ for which the induced (polarized)
flat metric admits square-tiling by unit squares. The
associated Masur--Veech volume element $d\!\Vol^{\L}$ on
$\cL$ coincides with the volume element induced from the
volume element $d\!\Vol^{\L}$ on $\cQ(\nu,-1^{|\nu|+4})$
by means of the cover $P:\cL\to \cQ(\nu,-1^{|\nu|+4})$.
It provides the same value of the Masur--Volume
$\Vol_1\cL_1$ as~\eqref{eq:Vol:L}.

\item
$\L'$-lattice points in the arithmetic invariant
suborbifold $\cL'$ are exactly those triples $(\hat
X,\omega,\hat\Sigma')$ for which the induced (polarized)
flat metric admits square-tiling by unit squares. The
induced normalization $d\!\Vol^{\L'}$ of the Masur--Veech
volume element on $\cL'$ provides the same value of the
Masur--Volume $\Vol_1\cL'_1$ as~\eqref{eq:Vol:L}.
\end{enumerate}
\end{Lemma}
\begin{proof}
By Lemma~\ref{lm:A1:coinside:with:1:2:L} the lattices
$\tfrac{1}{2}\L$ and~\eqref{eq:half:int:lattice} coincide.
Upon this observation, the assertion (1) becomes a direct
corollary of Lemma~B.1. from~\cite{AEZ:genus:0} (reproduced
as Lemma~\ref{lm:pillowcase} below).

Statement (2) is proved in Remark~\ref{rm:4:power:d}.

The assertions of the remaining statements (3) and (4)
concerning the square-tilings are proved by the following
standard argument, see, for
example~\cite{Zorich:square:tiled}. We recall the proof for
the lattice $\L$; the proof for $\L'$ is completely
analogous.

By definition of the lattice
$\L$, the relative cohomology class $[\omega]$
of any $\L$-lattice point $(\hat
X,\omega,\hat\Sigma)$ in $\cL$ belongs to
$H^1(\hat X,\hat\Sigma;\Z\oplus i\Z)$. Fix a point
$P\in\hat\Sigma$. Consider the map $p_{\omega}  : X \to
\C{}/(\Z\oplus i\,\Z)$ defined as
$$
p_{\omega} : Q\mapsto \Big(\int_{P}^Q \omega\Big)\
\text{mod } \Z\oplus i\,\Z\,.
$$
The condition $[\omega]\in H^1(\hat X,\hat\Sigma;\Z\oplus
i\Z)$ implies that the map does not depend on the path
joining the distinguished point $P$ to a point $Q$ of $X$,
so $p_\omega$ is well-defined. It is easy  to see that
$p_{\omega}$ is a  ramified cover and all ramification
points of $p_\omega$ belong to $\hat\Sigma$. Consider the
flat torus $\C{}/(\Z\oplus i\,\Z)$ as a unit square with
the identified  opposite sides. The cover $p_{\omega}$
endows $X$ with a  tiling by unit  squares. By construction
all points from $\hat\Sigma$ are located in the corners of
the resulting square-tiling and, thus, the assertions of
statements (3) and (4) concerning the square-tilings are
proved.

The coincidence of the volume element $d\!\Vol^{\L}$ on
$\cL$ with the volume element obtained by
pulling back the volume element $d\!\Vol^{\L}$
on $\cQ(\nu,-1^{|\nu|+4})$ by means of the cover $P:\cL\to
\cQ(\nu,-1^{|\nu|+4})$ follows from the fact that both
$\cL$ and $\cQ(\nu,-1^{|\nu|+4})$ share the same local
period coordinates $H^1_-(\hat X,\hat\Sigma,\C)$ in which
the two volume elements are determined by the same lattice
$\L$. The cover $P$ acts in these coordinates as the
identity map.

Finally, the assertions about the values of the resulting
Masur--Veech volumes $\Vol_1\cL_1$ and $\Vol_1\cL'_1$ in
(3) and (4) respectively now follow from equivalence of
definition~\eqref{eq:normalization:masur:veech} (where we
respectively take $\cL_1$ or $\cL'_1$ as $V_1$) and
definition~\eqref{eq:Vol:L}; this equivalence was proved in
Section~\ref{ss:Masur:Veech:volume:element}.
\end{proof}

We are now ready to prove
Proposition~\ref{prop:volumes:quad:and:double:cover}.
Though, formally speaking, we provide a proof of
Proposition~\ref{prop:volumes:quad:and:double:cover} only
for strata of meromorphic quadratic differentials in genus
zero, the general case is completely analogous with
exception for connectedness of
$\widehat{\cQ}(\xi)^{\mathit{comp}}$ which is claimed in
Proposition~\ref{prop:volumes:quad:and:double:cover} only
for strata in genus zero.

\begin{proof}[Proof of  Proposition~\ref{prop:volumes:quad:and:double:cover}]
Let $(\hat X,\omega,\hat\Sigma)$ be a square-tiled surface
in $\cL$. By construction, the cover $p_\omega:\hat X\to\T$ constructed in
the proof of Lemma~\ref{lm:lattices:as:square:tilings}
intertwines the involutions $\tau:\hat X\to\hat X$ and
$\iota:\T\to\T$, where $\iota$ is the elliptic involution
on $\T$:
$$
\begin{CD}
\hat X   @>\tau>>  \hat X\\
@V p_\omega VV @V p_\omega VV \\
\T @>\iota>> \T
\end{CD}
$$
This implies that the involution $\tau$ maps the squares
of the tiling onto squares of the tiling.

The involution $\tau$ maps $\omega$ to $-\omega$, i.e.
$\tau^\ast\omega=-\omega$. Thus, if one of the squares
maps to itself, this is done by
the central symmetry, which fixes the center of the square.
Hence, the center of this square would be the preimage of a
simple pole of $q$ on $X$ which contradicts the assumption
that the points of $\hat\Sigma$ are located only at the
corners of the squares. This argument proves assertions (1)
and (2) of
Proposition~\ref{prop:volumes:quad:and:double:cover}.

Assertion (3) of
Proposition~\ref{prop:volumes:quad:and:double:cover}
follows directly from assertions (2) and (3) of
Lemma~\ref{lm:lattices:as:square:tilings}.

The remaining assertion (4) of
Proposition~\ref{prop:volumes:quad:and:double:cover} is
specific for genus zero. It was already proved in
Lemma~\ref{lem:cL:over:cQ}.
\end{proof}

Throughout Section~\ref{s:Equidistribution} we have predominantly
considered the invariant arithmetic orbifold $\cL$.
Corollary~\ref{cor:hyperelliptic:components} below
describes the important particular case when it is
natural to consider the invariant arithmetic orbifold
$\cL'$.

\begin{Corollary}
\label{cor:hyperelliptic:components}
Consider a stratum $\cQ(k,-1^{k+4})$ of those meromorphic
quadratic differentials in genus zero, which have a single
zero of order $k$, where $k\in\N$, and $k+4$ simple poles.
The associated invariant arithmetic orbifold $\cL'$
in the corresponding stratum of Abelian differentials
coincides with the hyperelliptic connected component
$\cH^{\mathit{hyp}}(k+1)$ when $k$ is odd and with the
hyperelliptic connected component
$\cH^{\mathit{hyp}}(\left(\frac{k}{2}\right)^2)$ when $k$ is
even.

In this particular case the lattices $\tfrac{1}{2}\L$ and
$\L'$ coincide and thus the Masur--Veech volumes
$\Vol_1\cL'_1$ and $\Vol_1\cQ_1(k,-1^{k+4})$ defined
by~\eqref{eq:Vol:L}
and~\eqref{eq:easy:normalization:Vol:cQxi} respectively are
related by~\eqref{eq:Vol:L:Sigma:1:Vol:Q}.
\end{Corollary}
\begin{proof}
The first assertion is the definition of a
hyperelliptic connected component,
see~\cite{Kontsevich:Zorich}.

The coincidence of lattices $\tfrac{1}{2}\L$ and $\L'$
follows from~\eqref{eq:index:marked:nonmarked}. Namely, in
our case the partition $\nu$ contains a single entry, and
thus $4^{\ell(\nu)-1}=1$. Hence, choosing in
Lemma~\ref{lm:Vol:star} the Masur--Veech volume element
$d\!\Vol^{\frac{1}{2}\L}$ on $\cQ(k,-1^{k+4})$ we get as
the induced volume element $d\!\Vol^{\frac{1}{2}\L}$ on
$\cL'$ the Masur--Veech volume element $d\!\Vol^{\L'}$.
Thus, the corresponding volumes are related by
formula~\eqref{eq:Vol:L:Sigma:1:Vol:Q}.

It remains to note that by
Lemma~\ref{lm:lattices:as:square:tilings} the Masur--Veech
volume $\Vol_1\cL'_1$ defined by~\eqref{eq:Vol:L}
corresponds to the normalization of the Masur--Veech volume
element on $\cL'$ by the lattice $\L'$ and that the
Masur--Veech volume $\Vol_1\cQ_1(k,-1^{k+4})$ defined
by~\eqref{eq:easy:normalization:Vol:cQxi} corresponds to
the normalization of the Masur--Veech volume element on
$\cQ(k,-1^{k+4})$ by the lattice $\tfrac{1}{2}\L$, or,
equivalently, by the lattice~\eqref{eq:half:int:lattice}.
\end{proof}

Remark~1.2 at the end of Section~1.1 in~\cite{AEZ:genus:0}
presents the following two illustrations of
Corollary~\ref{cor:hyperelliptic:components}
(see~\eqref{eq:volume} for the definition of $f$):
\begin{align*}
\Vol_1\cQ_1(1, -1^5) &=
2\pi^2\!\cdot\! f(1)\!\cdot\!\left(f(-1)\right)^5=
2\pi^2\cdot\frac{\pi^2}{2}\cdot 1^5=
5!\cdot\frac{\pi^4}{120}=5!\cdot\Vol_1\cH_1(2)\,,
\\
\Vol_1\cQ_1(2, -1^6) &=
2\pi^2\!\cdot\! f(2)\!\cdot\!\left(f(-1)\right)^6=
2\pi^2\!\cdot\!\frac{4\pi^2}{3}\!\cdot\! 1^6=
\frac{6!}{2}\!\cdot\!\frac{\pi^4}{135}
=\frac{6!}{2}\!\cdot\!\Vol_1\cH_1(1,1)\,.
\end{align*}

We complete this Section reproducing the statement of the
result from Appendix~B in~\cite{AEZ:genus:0} proving that
in genus zero the $\frac{1}{2}$-square-tiled surfaces
$(X,q,\Sigma)$ (i.e. the points of the stratum
corresponding to the lattice $\tfrac{1}{2}\L$, or,
equivalently, to the same lattice defined
as~\eqref{eq:half:int:lattice}) are represented by
pillowcase covers specified below. We warn the reader that
the statement of Lemma~\ref{lm:pillowcase} below is not
valid for genera higher than zero.

Let  $\Lambda \subset \C$ be a lattice, and let $\T^2 = \C/\Lambda$
be the associated torus. The quotient
$$
\cP : = \T^2/\pm
$$
by  the  map  $z  \rightarrow  -z$ is known as the
\textit{pillowcase orbifold}.  It  is  a  sphere with four
$(\Z/2)$-orbifold points (the corners  of  the
pillowcase). The quadratic differential $(dz)^2$ on $\T^2$
descends  to  a  quadratic differential on $\cP$. Viewed as
a quadratic  differential  on  the  Riemann sphere,
$(dz)^2$ has simple poles  at  corner  points. When the
lattice $\Lambda$ is the standard integer  lattice
$\Z\oplus i\Z$, the flat torus $\T^2$ is obtained by
isometrically  identifying  the  opposite sides of a unit
square, and the  pillowcase  $\cP$  is  obtained by
isometrically identifying two $\frac{1}{2}\times\frac{1}{2}$
squares along the perimetwer, see the right picture
in Figure~\ref{fig:square:pillow} in
section~\ref{ss:pairs:of:multicurves:as:square:tiled:surfaces}.

Consider  a  connected  ramified cover $\hat\cP$ over $\cP$ of degree
$d$  having  ramification  points  only  over  the  corners of the
pillowcase.  Clearly,  $\hat\cP$  is  tiled by $2d$ squares of the
size $\frac{1}{2}\times\frac{1}{2}$. Coloring the two squares
of the pillowcase $\cP$ one in black and the other in white, we get a
chessboard  coloring of the square tiling of the cover $\hat\cP$:
the white squares are always glued to the black ones and vice versa.

\begin{Lemma}[\cite{AEZ:genus:0}]
\label{lm:pillowcase}
Let $S=(X,q,\Sigma)$ be a point in the stratum
$\cQ(\nu,-1^{|\nu|+4})$ of meromorphic quadratic
differentials in genus zero. The following properties are
equivalent:
\begin{enumerate}
\item
$S$ represents a point
of the lattice~\eqref{eq:half:int:lattice}
in $\cQ(\nu,-1^{|\nu|+4})$;
\item
$S$ is a cover over $\cP$ ramified only over the corners of the pillowcase;
\item
$S$ is tiled by black and white $\frac{1}{2}\times\frac{1}{2}$ squares in the
chessboard order.
\end{enumerate}
\end{Lemma}

Lemma~\ref{lm:pillowcase} implies that any square-tiled
surface in any stratum of meromorphic quadratic
differentials in genus zero is always tiled with
\textit{even} number of squares and that such tiling always
admits chessboard coloring.

Note that in genera one and higher one finds square-tiled
surfaces tiled with odd number of squares and square-tiled
surfaces tiled with even number of squares which do not
admit chessboard coloring.
Note also that ``pillowcase covers'' are defined
differently by different authors, see e.g.~\cite{Eskin:Okounkov:pillowcase}.

%############################################################
%############################################################
%############################################################

\end{document}